\titleformat{\section}{\large\bfseries\center\raggedright}{\thesection}{0.5em}{{#1}}[]
\titleformat{\subsection}[runin]{\bfseries}{\thesubsection}{0.5em}{{#1}}[.]
\titleformat{\subsubsection}[runin]{\bfseries\itshape}{\thesubsubsection}{0.5em}{{#1}}[.]
\titleformat{\paragraph}[runin]{\itshape}{\theparagraph}{0.5em}{{#1}}[.---]
\titlespacing*{\section}{0pt}{0.8\baselineskip}{0.6\baselineskip}
\titlespacing*{\subsection}{0pt}{0.6\baselineskip}{0.4\baselineskip}
\titlespacing*{\subsubsection}{0pt}{0.4\baselineskip}{0.4\baselineskip}
\titlespacing*{\paragraph}{0pt}{0.2\baselineskip}{0.2\baselineskip}
\numberwithin{equation}{section}
\newcommand\namefont{\normalfont\scshape}
\newcommand\numberfont{\normalfont\scshape}
\newcommand\notefont{\normalfont}
\newtheoremstyle{mystyle} 
    {0.3em} 
    {0.3em} 
    {\itshape} 
    {} 
    {\normalfont} 
    {.} 
    {.5em} 
    {{\namefont\thmname{#1}}~{\numberfont\thmnumber{#2}}{\notefont\thmnote{ (#3)}}} 
\theoremstyle{plain}
\newtheorem{thm}{Theorem}[section]
\newtheorem{rem}[thm]{Remark}
\newtheorem{lem}[thm]{Lemma}
\newtheorem{cor}[thm]{Corollary}
\newtheorem{prop}[thm]{Proposition}
\newtheorem{assu}[thm]{Assumption}
\newtheorem{pb}[thm]{Problem}
\newtheorem{defn}[thm]{Definition}
\newtheoremstyle{namedthmstyle} 
        {} 
    {} 
    {\itshape} 
    {} 
    {\bfseries} 
    {} 
    { } 
        {}
\theoremstyle{namedthmstyle}
\newcommand{\thistheoremname}{}
\newtheorem*{genericthm}{\thistheoremname}
\newenvironment{namedthm}[1]
{\renewcommand{\thistheoremname}{#1}%
  \begin{genericthm}}
  {\end{genericthm}}
\def\namedlabel#1#2{\begingroup
   \def\@currentlabel{#2}%
   \label{#1}\endgroup
}
\definecolor{darkred}{rgb}{0.6,0.1,0.1}
\definecolor{darkgreen}{rgb}{0.1,0.6,0.1}
\definecolor{darkblue}{rgb}{0.1,0.1,0.6}
\newcommand{\mt}[1]{\mathrm{#1}}
\def\st{\, \left|\right. \,} 
\def\:{\colon} 
\def\wto{\rightharpoonup} 
\newcommand{\abs}[1]{\left\vert#1\right\vert}
\newcommand{\ap}[1]{\left\langle#1\right\rangle} 
\newcommand{\norm}[1]{\left\Vert#1\right\Vert}
\def\grad{\nabla}
\def\bgrad{\boldsymbol{\nabla}}
\DeclareMathOperator{\dive}{div}
\DeclareMathOperator{\dist}{dist}
\DeclareMathOperator{\argmin}{argmin}
\DeclareMathOperator{\graph}{Gr}
\def\R{\mathbb{R}} 
\def\N{\mathbb{N}} 
\def\B{\mathcal{B}} 
\def\e{\varepsilon}
\def\d{\,\mathrm{d}}
\def\p{\partial}
\newcommand{\bs}[1]{\boldsymbol{#1}}
\def\bnull{\boldsymbol{0}}
\def\bx{\boldsymbol{x}}
\def\bu{\boldsymbol{u}}
\def\bv{\boldsymbol{v}}
\def\ba{\boldsymbol{a}}
\def\bg{\boldsymbol{g}}
\def\bff{\boldsymbol{f}}
\def\bn{\boldsymbol{n}}
\def\bvarphi{\boldsymbol{\varphi}}
\def\bLambda{\boldsymbol{\Lambda}}
\def\blambda{\boldsymbol{\lambda}}
\def\bL{\boldsymbol{L}}
\def\bV{\boldsymbol{V}}
\def\bp{\boldsymbol{\partial}}
\def\F{\mathcal{F}} 
\def\E{\mathcal{E}} 
\def\D{\mathcal{D}} 
\def\Reg{\gamma} 
\newcolumntype{C}[1]{>{\centering\arraybackslash}m{#1}}
\newcolumntype{L}[1]{>{\raggedright\arraybackslash}m{#1}}
\renewcommand{\leq}{\leqslant}
\renewcommand{\geq}{\geqslant}
\newcommand{\sib}[1]{[\si{#1}]}
\begin{document}

\title{Well-posedness and variational numerical scheme for an adaptive model in highly heterogeneous porous media}

\author{Alessio Fumagalli$^1$ \and Francesco Saverio Patacchini$^2$}

\date{
$^1$ Department of Mathematics, Politecnico di Milano, p.za Leonardo da Vinci 32, Milano 20133, Italy\\%
$^2$ IFP Energies nouvelles, 1 et 4 avenue de Bois-Pr\'eau, 92852 Rueil-Malmaison, France
}

\maketitle

\begin{abstract}
\noindent Mathematical modeling of fluid flow in a porous medium is usually described by a continuity equation and a chosen constitutive law. The latter, depending on the problem at hand, may be a nonlinear relation between the fluid's pressure gradient and velocity. The actual shape of this relation is normally chosen at the outset of the problem, even though, in practice, the fluid may experience velocities outside of its range of applicability. We propose here an adaptive model, so that the most appropriate law is locally selected depending on the computed velocity. From the analytical point of view, we show well-posedness of the problem when the law is monotone in velocity and show existence in one space dimension otherwise. From the computational point of view, we present a new approach based on regularizing via mollification the underlying dissipation, i.e., the power lost by the fluid to the porous medium through drag. The resulting regularization is shown to converge to the original problem using $\Gamma$-convergence on the dissipation in the monotone case. This approach gives rise to a variational numerical scheme which applies to very general problems and which we validate on three test cases.
\end{abstract}

\noindent \textit{\textbf{Keywords:} porous media flow, adaptive constitutive law, variational scheme}



\section{Introduction}
\label{sec:introduction}

We study the stationary flow of a Newtonian fluid in a fully saturated, highly
heterogeneous porous medium. Typically, the heterogeneities come from the
lithological and geometrical properties of the medium. Indeed, very different
sediments (such as sandstone and carbonates) and fractures with irregular
aperture may be involved. These properties impact the permeability of the domain
and thus the fluid's velocity.

Our model is based on the following constitutive, or seepage, law, which is in fact a force balance:
\begin{equation}
  \label{eq:main-intro}
  \blambda(\bu) = - \bgrad p + \bff,
\end{equation}
where $\bu$ and $p$ are respectively the seepage flux and the fluid's pressure.
The term $\blambda(\bu)$ is the opposite of the drag force experienced by the
fluid and the term $\bff$ is a vector of external body forces, like gravity. The assumption on
the operator $\blambda$ most recurrent in the literature of porous media is linearity, meaning that \eqref{eq:main-intro} is Darcy's law \cite{Bear1972,Helmig1997}. This, however,
is known to be valid only for low Reynolds numbers, i.e., low fluid speeds
\cite{Zeng2006}, beyond which Darcy's law tends to overestimate velocities. To describe flows at higher speeds more accurately, it is common to add a quadratic
term to Darcy's law to penalize high velocities and get the so-called Darcy--Forchheimer law, which is an
example of a nonlinear operator $\blambda$
\cite{Girault2008,Frih2008,Knabner2014,Ahmed2018}. Other common laws are obtained by
adding a higher-order term or a Laplacian term to Darcy's, yielding
Forchheimer's generalized law or Brinkman's law \cite{Morales2017}, respectively.

These commonly used models are well known for being well posed and providing
good predictions in a homogeneous medium. They can also be adapted to give
accurate results in a heterogeneous medium by, for instance, taking spatially
dependent permeabilities. However, they do not cover the case when the medium's
heterogeneities yield an operator $\blambda$ discontinuous in $\bu$. Since a
linear law is better adapted to low Reynolds numbers whereas a nonlinear law
gives a better description of high-speed regimes, one may expect that allowing
$\blambda$ to be linear under some given speed threshold and to be nonlinear
above this threshold should deliver improved results. This consideration
motivated us to study discontinuous seepage laws in \cite{FP21}, and we continue
here the work started therein.

To handle mathematically such a discontinuous problem, we make use of a multivalued version of \eqref{eq:main-intro} in the case when $\blambda$ involves no space derivatives of the flux (thus excluding Brinkman's law). We then show its well-posedness when the drag force is maximal monotone in the flux variable, using classical tools from multivalued operator theory. We also prove existence of solutions when the monotonicity fails and the space dimension $d$ equals one. Consequently, we introduce a regularized, monovalued approximation of the multivalued problem, which can be solved numerically using classical fixed-point and finite-element methods. This regularization is based on the mollification of the dissipation (i.e., the power the fluid loses to the surrounding medium because of drag), and we show that it converges to the original problem using variational results, in particular, the $\Gamma$-convergence of the regularized dissipation to the unregularized one when it is convex; when the dissipation is nonconvex, the regularized problem is still shown to have solutions when $d=1$, but is not proved to converge in any case. After applying the fixed-point and finite-element methods, we compare the resulting regularized algorithm to that introduced in~\cite{FP21}, called the transition-zone tracking algorithm. The latter is based on iteratively locating the zones separating any pair of different speed regimes and solving the appropriate law in every region thus defined; it differs from the algorithm derived in this paper, which, instead of tracking the transition zones sharply, spreads them out smoothly and then solves the resulting problem using the same regularized law in the whole medium. The two approaches give very similar results for $d=1$ and for a combination of two different speed regimes, as we show on a simple test case, but the regularized approach offers the advantage of applying immediately for $d>1$ and for any number of regimes, as showcased by two other test cases.

This paper is organized as follows. In Section~\ref{sec:physical-framework}, the
physical and multivalued framework is introduced and motivated, and in Section~\ref{sec:math-fram} the weak formulation is given and the well-posedness results proved in the adequate functional spaces for general constitutive operators including no space derivatives of the flux. Section~\ref{sec:well-posedn-diss} contains the well-posedness theory specifically formulated for some common examples of constitutive laws. In Section~\ref{sec:regul-probl-diss}, the regularizing approach is introduced and its convergence demonstrated, while in Section~\ref{sec:num-appr} we briefly describe the numerical approximation adopted to solve the regularized problem. Section~\ref{sec:numerical_results} contains the numerical results of
three test cases. Finally, in Section~\ref{sec:conclusion}, we give conclusions.
For the reader's convenience, appendices are provided recalling basic notions on multivalued operators, functionals and mollification.


\section{Physical framework}
\label{sec:physical-framework}

We denote by $\Omega$ the porous medium, which we assume to be open, bounded and with Lipschitz boundary $\p\Omega$; we write $\bn$ the outward normal unit vector of $\p\Omega$. The unknowns of the problems discussed throughout the paper are the fluid's pressure $p\:\Omega\to\R$ and the seepage flux $\bu\:\Omega\to\R^d$ defined by $\bu = \rho\phi\bV$, where $\phi$ is the medium's porosity and $\rho$ and $\bV$ are the fluid's density and velocity. This relation between flux and velocity justifies that the terms ``flux'' and ``velocity'' may be used interchangeably. We suppose that $\rho\:\Omega\to(0,\infty)$ and $\phi\:\Omega\to(0,1)$ are space-dependent knowns of the problem.

We wish to study the stationary flow of the fluid through the porous medium. 

\subsection{Classical setting}
\label{sec:classical-setting}

Before discussing our novel approach, let us recall the classical setting for the description of the fluid flow in $\Omega$.

\subsubsection{Problem formulation}
\label{sec:problem-formulation}

The conservation of mass reads
\begin{equation}
  \label{eq:cons-mass}
  \dive\bu = q \quad \text{in $\Omega$},
\end{equation}
where $q\:\Omega\to\R$ is a known fluid mass source, and the conservation of momentum is given by
\begin{equation}
 \label{eq:cons-mom}
  \blambda(\bu) = -\bgrad p + \bff \quad \text{in $\Omega$},
\end{equation}
where $\bff\:\Omega\to\R^d$ is a known vector of external body forces, possibly including gravity. The conservation of momentum shows that the pressure gradient and the external forces balance the \emph{drag force} $-\blambda(\bu)$ undergone by the fluid. We refer to \eqref{eq:cons-mom} as the \emph{seepage law} and to $\blambda$ as the \emph{drag operator}.

To close the problem \eqref{eq:cons-mass}-\eqref{eq:cons-mom} for both the flux and the pressure, we need to fix boundary conditions. Thus, let $\Sigma_{\mt{v}},\Sigma_{\mt{p}} \subset \p\Omega$ be relatively open in $\p\Omega$ (i.e., $\Sigma_{\mt{v}}$ and $\Sigma_{\mt{p}}$ are each the intersection of an open subset of $\R^d$ with $\p\Omega$) and such that $\p\Omega = \overline{\Sigma_{\mt{v}} \cup \Sigma_{\mt{p}}}$ and $\Sigma_{\mt{v}} \cap \Sigma_{\mt{p}} = \emptyset$. Then, impose
\begin{equation}
  \label{eq:bdry-conditions}
  \begin{cases}
    \bu \cdot \bn = u_0 & \text{on $\Sigma_{\mt{v}}$},\\
    p = p_0 & \text{on $\Sigma_{\mt{p}}$},
  \end{cases}
\end{equation}
where $u_0: \Sigma_{\mt{v}} \rightarrow \R$ and $p_0\:\Sigma_{\mt{p}}\to\R$ are given functions setting the conditions on the boundary for $\bu$ and $p$. For simplicity, a map on $\Omega$ and its trace on $\p\Omega$ are denoted by the same symbol. 

Overall, the problem summarizes as follows:
\begin{pb}[classical strong form]
  \label{pb:strong-form}
  Find $\bu\:\Omega\to\R^d$ and $p\:\Omega\to\R$ such that
  \begin{equation}
    \label{eq:strong-form}
    \begin{cases}
      \dive\bu = q & \text{in $\Omega$},\\
      \blambda(\bu) = -\bgrad p + \bff & \text{in $\Omega$},\\
      \bu \cdot \bn = u_0 & \text{on $\Sigma_{\mt{v}}$},\\
      p = p_0 & \text{on $\Sigma_{\mt{p}}$}.
    \end{cases}
  \end{equation}
\end{pb}

\begin{rem}[average pressure]
  \label{rem:press-ave}
  If the boundary piece $\Sigma_{\mt{p}}$ verifies $\mt{Vol}^{d-1}(\Sigma_{\mt{p}}) = 0$, where $\mt{Vol}^{d-1}$ is the $(d-1)$-dimensional Lebesgue measure, to ensure uniqueness of the pressure satisfying Problem~\ref{pb:strong-form}, one imposes a constraint on the average of $p$:
\begin{equation}
  \label{eq:pressure-average}
  \frac{1}{\abs{\Omega}}\int_\Omega p = \bar p,
\end{equation}
for a given $\bar p\in\R$. Tacitly, we therefore require \eqref{eq:pressure-average} in \eqref{eq:strong-form} whenever $\mt{Vol}^{d-1}(\Sigma_{\mt{p}}) = 0$. As seen below, this condition becomes explicit in the weak formulation of \eqref{eq:strong-form} through the definition of the underlying Sobolev space (cf. Section~\ref{sec:weak-formulation}).
\end{rem} 

\subsubsection{Continuous drag operators}
\label{sec:cont-drag-oper}

Classically, the drag operator $\blambda$ is assumed to be \emph{continuous} in flux and can either be linear or not. Common linear operators found in the literature include
\begin{equation}
  \label{eq:laws-linear}
  \blambda_{\mt{D}}(\bu) = \mathbb{D}\bu,\quad \blambda_{\mt{S}}(\bu) = \nu\bs{\Delta}\bu \quad \text{and} \quad \blambda_{\mt{B}}(\bu) = \mathbb{D}\bu + \nu\bs{\Delta}\bu,
\end{equation}
where $\mathbb{D}\:\Omega\to\R^{d\times d}$ is the drag tensor and $\nu\:\Omega\to(0,\infty)$ the fluid's kinematic viscosity, satisfying $\mathbb{D} = \nu \mathbb{K}^{-1}$, with $\mathbb{K}\:\Omega\to\R^{d\times d}$ the medium's permeability. The first two operators in \eqref{eq:laws-linear} correspond to Darcy's and Stokes' laws, respectively, while the third one yields a combination of the two, referred to as Brinkman's law. 

Nonlinearities can occur when high speeds are reached by the fluid. Classical examples reflecting this behavior are given by
\begin{equation}
\label{eq:laws-nonlinear}
  \blambda_{\mt{F}}(\bu) = \lambda\norm{\bu} \bu, \quad \blambda_{\mt{GF}}(\bu) = \lambda\norm{\bu}^\gamma \bu \quad \text{and} \quad \blambda_{\mt{DF}}(\bu) = (\mathbb{D} + \lambda\norm{\bu}\mathbb{I})\bu,
\end{equation}
where $\lambda\:\Omega\to(0,\infty)$ and $\gamma\in(0,\infty)$ are the Forchheimer coefficient and exponent, respectively, and $\mathbb{I}$ stands for the identity matrix. The first operator in \eqref{eq:laws-nonlinear} corresponds to Forchheimer's law, which is the case $\gamma=1$ in Forchheimer's generalized law given by the second operator. The third operator leads to a combination of Darcy's and Forchheimer's laws, referred to as the Darcy--Forchheimer law. Nonlinearities can also come into play at very low speeds, in which case a sublinear part can be added to Darcy's law to get the following operator:
\begin{equation}
\label{eq:law-sublinear}
  \blambda_{\mt{sub}}(\bu) = \frac{\lambda_1\bu}{1+\lambda_2\norm{\bu}} + \mathbb{D}\bu,
\end{equation}
where $\lambda_1,\lambda_2\:\Omega\to(0,\infty)$ are experimental parameter functions~\cite{HYHW13}.

\subsection{New setting}
\label{sec:new-setting}

As mentioned in the introduction, we wish to include heterogeneities which yield a discontinuity of the drag operator with respect to the seepage flux. Below, we show that the conservation of momentum \eqref{eq:cons-mom} needs to be adapted to this discontinuous setting (whereas the conservation of mass \eqref{eq:cons-mass} and the boundary conditions \eqref{eq:bdry-conditions} remain untouched).

\subsubsection{Motivating example}
\label{sec:motivating-example}

In \eqref{eq:laws-nonlinear}, the operator $\blambda_{\mt{DF}}$ offers fair accuracy when both low- and high-speed regimes are encountered: where Reynolds' number is low, the linearity of Darcy's law prevails, whereas where it is high, the nonlinearity of Forchheimer's dominates. Nevertheless, since the Forchheimer term is always present, nonlinear effects may manifest even in low-speed parts of $\Omega$, especially in the neighborhood of the transition zone separating low- and high-speed regions. To counter this effect, we wish to consider the drag operator $\blambda_{\text{D/DF}}$ given as
\begin{equation}
  \label{eq:jump-DDF}
  \blambda_{\text{D/DF}}(\bu) = \begin{cases} \mathbb{D}\bu & \text{if $\norm{\bu}< \bar u$},\\ \mathbb{D}\bu + \lambda\norm{\bu}\bu & \text{if $\norm{\bu}> \bar u$}, \end{cases} 
\end{equation}
where $\bar u>0$ is a \emph{threshold flux} defining the separation between speed regimes and $\norm{\cdot}$ stands for the norm in $\R^d$. Note that $\blambda_{\text{D--DF}}(\bu)$ has a gap in its domain corresponding to the transition zone 
\begin{equation*}
  \Gamma(\bu) := \{\bx\in \Omega \st \norm{\bu(\bx)} = \bar u\};
\end{equation*}
indeed, we want to impose the drag force only in the low- and high-speed regions $\{\bx\in \Omega \st \norm{\bu(\bx)} < \bar u\}$ and $\{\bx\in \Omega \st \norm{\bu(\bx)} > \bar u\}$. Consequently, the conservation of momentum \eqref{eq:cons-mom} is not anymore valid on all of $\Omega$ and needs to be changed into
\begin{equation*}
  \blambda_{\text{D/DF}}(\bu) = -\bgrad p + \bff \quad \text{in $\Omega \setminus \Gamma(\bu)$}.
\end{equation*}
Then, Problem~\ref{pb:strong-form} must be modified accordingly.

\subsubsection{Jump drag operators}
\label{sec:jump-drag-operators}

Following the above motivating example, we now identify at least one class of drag operators which we want our theory to include; we refer to the members of this class as \emph{jump} operators.

Let $n\in\{2,3,\dots\}$ and consider a family $\{\bar u_j\}_{j=1}^{n-1}\subset(0,\infty)$ of strictly ordered threshold fluxes to which we add the convenient values $\bar u_0 = - \bar u_n = -\infty$. Let also $\{\blambda_j\}_{j=1}^n$ be a family of drag operators. Given a flux $\bu$, write $\{\Gamma_j(\bu)\}_{j=1}^{n-1}$ and $\{\Omega_j(\bu)\}_{j=1}^n$ the sets of transition zones and speed regions, respectively, given by
\begin{equation}
  \label{eq:trans-speed-regions}
  \begin{gathered}
    \Gamma_j(\bu) = \{\bx\in\Omega\st \norm{\bu(\bx)} = \bar u_j\}, \qquad j\in\{1,\dots,n-1\},\\
    \Omega_j(\bu) = \{\bx\in\Omega\st \norm{\bu(\bx)} \in (\bar u_{j-1},\bar u_j)\}, \qquad j\in\{1,\dots,n\}.
  \end{gathered}
\end{equation}
Then, for all $j\in\{1,\dots,n\}$, let $\blambda$ be given as
\begin{equation}
  \label{eq:jump-op}
  \blambda(\bu) = \blambda_j(\bu) \quad \text{in $\Omega_j(\bu)$},
\end{equation}
which is what we call a jump drag operator.

\paragraph{Gap formulation}
Writing 
\begin{equation*}
  \Gamma(\bu) = \bigcup_{j=1}^{n-1}\Gamma_j(\bu),
\end{equation*}
the union of all the transition zones, $\blambda$ in \eqref{eq:jump-op} can be equivalently rewritten as
\begin{equation}
  \label{eq:drag-si}
  \blambda(\bu) = \sum_{j=1}^n s_j(\norm{\bu}) \blambda_j(\bu) \quad \text{in $\Omega \setminus \Gamma(\bu)$},
\end{equation}
where, setting $\R_+:=[0,\infty)$, $s_j\:\R_+\setminus\{\bar u_j\}_{j=1}^{n-1} \to [0,1]$ is defined by
\begin{equation}
  \label{eq:selection-map}
  s_j(a) = \begin{cases} 0 & \text{if $a\not\in [\bar u_{j-1},\bar u_j]$},\\ 1 & \text{if $a\in(\bar u_{j-1},\bar u_j)$}.  \end{cases}
\end{equation}
We call $s_j$ the $i$th \emph{selection map} since it selects the drag operator to be used given the magnitude of the flux. As in the motivating example of Section~\ref{sec:motivating-example}, the map $\blambda(\bu)$ has a gap in its domain given by $\Gamma(\bu)$, and, analogously, the conservation of momentum in \eqref{eq:cons-mom} is updated to 
\begin{equation}
  \label{eq:cons-mom-gen}
  \blambda(\bu) = -\bgrad p + \bff \quad \text{in $\Omega \setminus \Gamma(\bu)$}.
\end{equation}
Problem~\ref{pb:strong-form} then becomes
\begin{pb}[strong form---jump drag operators]
  \label{pb:strong-form-jump}
  Find $\bu\:\Omega\to\R^d$ and $p\:\Omega\to\R$ such that
  \begin{equation*}
    \begin{cases}
      \dive\bu = q & \text{in $\Omega$},\\
      \blambda(\bu) = -\bgrad p + \bff & \text{in $\Omega\setminus\Gamma(\bu)$},\\
      \bu \cdot \bn = u_0 & \text{on $\Sigma_{\mt{v}}$},\\
      p = p_0 & \text{on $\Sigma_{\mt{p}}$},
    \end{cases}
  \end{equation*}
where $\blambda$ is of the form \eqref{eq:drag-si}.
\end{pb}

\paragraph{Multivalued formulation}

Problem~\ref{pb:strong-form-jump} is unconstrained in the transition zones since indeed we only impose the momentum conservation outside of these (cf.~\eqref{eq:cons-mom-gen}). This means in particular that the drag force is allowed to satisfy any relation in the transition zones. Although we do not wish to impose a transition drag force since we do not know a priori what it should be, this gap in the formulation of Problem~\ref{pb:strong-form-jump} is not satisfactory for at least two reasons:
\begin{itemize}
  \item it does not disappear when the family of laws $\{\blambda_j\}_{j=1}^n$ satisfy $\blambda_j = \blambda_{j+1}$ for all $j\in\{1,\dots,n-1\}$, so that, in this case, we do not recover the classical, continuous formulation of Problem~\ref{pb:strong-form};
  \item it is physically too permissive in admitting \emph{any} drag forces in the transition zones, while we expect these transition forces not to be ``too far'' from the surrounding, imposed ones.
\end{itemize}

To fix this issue, we propose an alternative version of Problem~\ref{pb:strong-form-jump} based on a set-valued extension of the selection maps in \eqref{eq:selection-map} to the threshold fluxes: for any $j\in\{1,\dots,n\}$, define $S_j\:\R_+\rightrightarrows [0,1]$ so that
\begin{equation}
  \label{eq:Sj-1}
  S_j(a) = \begin{cases} \{s_j(a)\} & \text{if $a\neq \bar u_j$},\\ [0,1] & \text{if $a=\bar u_j$},  \end{cases}
\end{equation}
with the additional condition that, if $j\neq n$, there holds
\begin{equation}
  \label{eq:Sj-2}
  S_j(a) + S_{j+1}(a) = \{1\}.
\end{equation}
Then, we define the \emph{multivalued} jump drag operator $\bLambda$ by
\begin{equation}
  \label{eq:drag-si-multiv}
  \bLambda(\bu) = \sum_{j=1}^n S_j(\norm{\bu}) \blambda_j(\bu).
\end{equation}
In particular, for all $j\in\{1,\dots,n\}$, $\bLambda$ satisfies
\begin{equation*}
  \bLambda(\bu) = \begin{cases} \{\blambda_j(\bu)\} & \text{in $\Omega_j(\bu)$},\\ \mt{conv}(\{\blambda_j(\bu),\blambda_{j+1}(\bu)\}) & \text{in $\Gamma_j(\bu)$}, \end{cases}
\end{equation*}
where $\mt{conv}(A)$ is the convex hull of set $A$. The associated problem is as follows:
\begin{pb}[multivalued strong form---jump drag operators]
  \label{pb:strong-form-jump-multiv}
  Find $\bu\:\Omega\to\R^d$ and $p\:\Omega\to\R$ such that
  \begin{equation*}
    \begin{cases}
      \dive\bu = q & \text{in $\Omega$},\\
      \bLambda(\bu) \ni -\bgrad p + \bff & \text{in $\Omega$},\\
      \bu \cdot \bn = u_0 & \text{on $\Sigma_{\mt{v}}$},\\
      p = p_0 & \text{on $\Sigma_{\mt{p}}$},
    \end{cases}
  \end{equation*}
where $\bLambda$ is of the form \eqref{eq:drag-si-multiv}.
\end{pb}

First, we note that any solution to Problem~\ref{pb:strong-form-jump-multiv} is a solution to Problem~\ref{pb:strong-form-jump}. Second, whenever $\blambda_j = \blambda_{j+1}$ for all $j\in\{1,\dots,n-1\}$, we recover the continuous formulation of Problem~\ref{pb:strong-form}. Third, transition drag forces are required to belong to the convex hull of the surrounding forces and thus stay somewhat ``close'' to them.

\subsubsection{General formulation}
\label{sec:general-formulation}

The discussion on jump operators leads us, for the remainder of the paper, to consider the following, general problem on any multivalued drag operator $\bLambda$:
\begin{pb}[multivalued strong form---general]
  \label{pb:strong-form-new-multiv}
  Find $\bu\:\Omega\to\R^d$ and $p\:\Omega\to\R$ such that
  \begin{equation*}
    \begin{cases}
      \dive\bu = q & \text{in $\Omega$},\\
      \bLambda(\bu) \ni -\bgrad p + \bff & \text{in $\Omega$},\\
      \bu \cdot \bn = u_0 & \text{on $\Sigma_{\mt{v}}$},\\
      p = p_0 & \text{on $\Sigma_{\mt{p}}$}.
    \end{cases}
  \end{equation*}
\end{pb}

Although the formulation of Problem~\ref{pb:strong-form-new-multiv} is very general and makes sense for drag operators involving space derviatives of the flux $\bu$, such as Stokes' and Brinkman's laws mentioned earlier (cf~\eqref{eq:laws-linear}), the analysis that we present below excludes such operators, which we leave for future investigation.

\section{Mathematical framework}
\label{sec:math-fram}

For all $\alpha\in(0,\infty)$, $\beta\in[1,\infty)$ and $A\subset \R^d$ measurable, we denote by $L^\beta(A)$ and $W^{\alpha,\beta}(A)$ the Lebesgue space of measurable functions on $A$ with integrable $\beta$th power and the $\alpha$th-order Sobolev space associated to $L^\beta(A)$; we also write $\bL^\beta(A)$ for $(L^\beta(A))^d$ and use $\norm{\cdot}_\beta$ for the canonical norm on $\bL^\beta(\Omega)$. As usual in these spaces, equality is intended in the almost everywhere sense.

Let $r\in(1,\infty)$ and write $s\in(1,\infty)$ its dual exponent, i.e., $s=r/(r-1)$. We fix $q\in L^r(\Omega)$, $\bff\in \bL^s(\Omega)$, $u_0\in L^{r}(\Sigma_{\mt{v}})$ and $p_0\in W^{\frac{1}{r},s}(\Sigma_{\mt{p}})$, and let $\bLambda\:\bL^r(\Omega)\rightrightarrows \bL^s(\Omega)$ be a multivalued drag operator so that, for all $\bu\in\bL^r(\Omega)$, we have $\bLambda(\bu) \neq \emptyset$. We first wish to derive a weak formulation for Problem~\ref{pb:strong-form-new-multiv} and then provide a well-posedness analysis for it.

\subsection{Weak formulation}
\label{sec:weak-formulation}

For any $a\in\R$ and $b\in W^{\frac{1}{r},s}(\Sigma_{\mt{p}})$, define the space
\begin{equation*}
  W_{a,b}^{1,s}(\Omega) := 
  \begin{cases}
    \left\{ \xi \in W^{1,s}(\Omega) \st \displaystyle \frac{1}{\abs{\Omega}}\int_\Omega \xi = a \right\} & \text{if $\mt{Vol}^{d-1}(\Sigma_{\mt{p}}) = 0$},\\
    \left\{ \xi \in W^{1,s}(\Omega) \st \xi = b\; \text{on $\Sigma_{\mt{p}}$} \right\} & \text{if $\mt{Vol}^{d-1}(\Sigma_{\mt{p}}) > 0$},
  \end{cases}
\end{equation*}
where we recall that $\mt{Vol}^{d-1}$ stands for the $(d-1)$-dimensional Lebesgue measure. In the sequel, we write $W_0^{1,s}(\Omega)$ for the Sobolev space $W_{0,0}^{1,s}(\Omega)$, which we endow with the norm $\norm{\psi}_{W_0^{1,s}(\Omega)} := \norm{\bgrad\psi}_s$ for all $\psi\in W_0^{1,s}(\Omega)$.

We can give a first weak formulation of Problem~\ref{pb:strong-form-new-multiv}:
\begin{pb}[weak form I]
  \label{pb:weak-form-bc}
    Find $(\bu,p) \in \bL^r(\Omega)\times W_{\bar p,p_0}^{1,s}(\Omega)$ so that there exists $\blambda\in\bLambda(\bu)$ satisfying
    \begin{equation*}
      \begin{gathered}
        \ap{\blambda,\bvarphi} = \ap{-\bgrad p + \bff,\bvarphi} \qquad \forall \bvarphi\in \bL^r(\Omega),\\
        \ap{\bgrad \psi,\bu} = -\int_\Omega q\psi + \int_{\Sigma_{\mt{v}}} u_0\psi
        \qquad \forall \psi\in W_0^{1,s}(\Omega),
    \end{gathered}
    \end{equation*}
    where $\ap{\cdot,\cdot}$ is the canonical dual pairing on $\bL^s(\Omega)\times\bL^r(\Omega)$ and $\bar p$ is as in Remark~\ref{rem:press-ave}.
\end{pb}
For simplicity, we want to remove the pressure boundary conditions $\bar p$ and $p_0$ from the formulation in Problem~\ref{pb:weak-form-bc}. For this, we set $\bff_0 = \bff$ if $\mt{Vol}^{d-1}(\Sigma_{\mt{p}}) = 0$ and $\bff_0 = \bff - \bgrad (E p_0)$ if instead $\mt{Vol}^{d-1}(\Sigma_{\mt{p}}) > 0$, with $E\: W^{\frac{1}{r},s}(\Sigma_{\mt{p}}) \rightarrow W^{1,s}(\Omega)$ any extension operator being right-inverse of the $W^{1,s}(\Omega)$ trace operator. By linearity with respect to pressure, Problem~\ref{pb:weak-form-bc} is equivalent to the following:
\begin{pb}[weak form II]
  \label{pb:weak-form-multiv}
    Find $(\bu,p) \in \bL^r(\Omega)\times W_0^{1,s}(\Omega)$ so that there exists $\blambda\in\bLambda(\bu)$ satisfying
    \begin{equation*}
      \begin{gathered}
        \ap{\blambda,\bvarphi} = \ap{-\bgrad p + \bff_0,\bvarphi} \qquad \forall \bvarphi\in \bL^r(\Omega),\\
        \ap{\bgrad \psi, \bu} = -\int_\Omega q\psi + \int_{\Sigma_{\mt{v}}} u_0\psi
        \qquad \forall \psi\in W_0^{1,s}(\Omega).
    \end{gathered}
    \end{equation*}
\end{pb}
\noindent Also note that uniqueness for Problem~\ref{pb:weak-form-bc} holds if and only if it does for Problem~\ref{pb:weak-form-multiv}.

\subsection{Well-posedness}
\label{sec:well-posedness}

We study now the well-posedness of Problem~\ref{pb:weak-form-multiv}. We first state the main results and then provide the proofs. 

For the various notions on multivalued operators used in the following statements and proofs, we refer the reader to Appendix~\ref{sec:noti-mult-oper}. In particular, note that we reserve the term ``continuous'' to monovalued operators and use ``set-continuous'' for possibly multivalued operators; although this choice is nonstandard, we make it to distinguish clearly the classical, monovalued framework (referred so far as continuous) from the new, multivalued setting (referred so far as discontinuous).

\begin{thm}[well-posedness---monotone operator]
  \label{thm:wp-monotone}
  Suppose that the drag operator $\bLambda$ is maximal monotone, and $\sigma$-coercive and $\sigma$-bounded for some $\sigma\geq1$. Then, Problem~\ref{pb:weak-form-multiv} has a solution $(\bu,p)$. If furthermore $\bLambda$ is strictly monotone, then $\bu$ is unique; if in addition it is monovalued, then $(\bu,p)$ is unique.
\end{thm}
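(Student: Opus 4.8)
The plan is to recast Problem~\ref{pb:weak-form-multiv} as a single monotone inclusion on $\bL^r(\Omega)$ in which the pressure appears as a Lagrange multiplier, and then to invoke the surjectivity of coercive maximal monotone operators on reflexive spaces. First I would read the second (divergence) equation as the condition that $\bu$ belongs to the closed affine set $\mathcal{U} := \{\bv\in\bL^r(\Omega) \st \ap{\bgrad\psi,\bv} = -\int_\Omega q\psi + \int_{\Sigma_{\mt{v}}}u_0\psi \ \forall\psi\in W_0^{1,s}(\Omega)\}$, whose underlying linear subspace is $\mathcal{U}_0 := \{\bv\in\bL^r(\Omega)\st \ap{\bgrad\psi,\bv}=0\ \forall\psi\in W_0^{1,s}(\Omega)\}$. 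That $\mathcal{U}\neq\emptyset$, i.e. the existence of a lifting $\bu_*\in\mathcal{U}$, follows from the solvability of the divergence equation with prescribed normal flux (surjectivity of the divergence), under the natural compatibility of the data when $\mt{Vol}^{d-1}(\Sigma_{\mt{p}})=0$. Since $\bgrad\:W_0^{1,s}(\Omega)\to\bL^s(\Omega)$ is injective with closed range (by the Poincar\'e inequality), the annihilator of $\mathcal{U}_0$ in $\bL^s(\Omega)$ is exactly $\{\bgrad p \st p\in W_0^{1,s}(\Omega)\}$, which is the normal cone $N_{\mathcal{U}}(\bu)$ at any $\bu\in\mathcal{U}$. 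The crucial consequence is the orthogonality $\ap{\bgrad p,\bv}=0$ for all $\bv\in\mathcal{U}_0$ and $p\in W_0^{1,s}(\Omega)$.

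With this in hand, the first equation of Problem~\ref{pb:weak-form-multiv}, holding for every $\bvarphi\in\bL^r(\Omega)$, states precisely $\blambda = -\bgrad p + \bff_0$ in $\bL^s(\Omega)$ with $\blambda\in\bLambda(\bu)$. Hence $(\bu,p)$ solves the problem if and only if $\bu\in\mathcal{U}$ and $\bff_0\in\bLambda(\bu)+N_{\mathcal{U}}(\bu)$, the gradient $\bgrad p$ being recovered as the normal-cone component. I therefore set $\mathcal{T}:=\bLambda+N_{\mathcal{U}}\:\bL^r(\Omega)\rightrightarrows\bL^s(\Omega)$ and aim to show $\bff_0\in\mathrm{Range}(\mathcal{T})$. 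The operator $N_{\mathcal{U}}$, being the subdifferential of the indicator of the closed convex set $\mathcal{U}$, is maximal monotone; and since $\bLambda$ is maximal monotone, everywhere defined on $\bL^r(\Omega)$, bounded (by $\sigma$-boundedness) and set-continuous, the sum $\mathcal{T}$ is again maximal monotone. To verify coercivity, I center at the lifting $\bu_*$: for $\bg=\blambda+\bgrad p\in\mathcal{T}(\bu)$ with $\bu\in\mathcal{U}$, the orthogonality gives $\ap{\bg,\bu-\bu_*}=\ap{\blambda,\bu-\bu_*}$, so the normal-cone part drops out and $\sigma$-coercivity of $\bLambda$ (with $\sigma$-boundedness absorbing the $\bu_*$ correction) yields $\ap{\bg,\bu-\bu_*}/\norm{\bu-\bu_*}_r\to\infty$. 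The surjectivity theorem then provides $\bu\in\mathcal{U}$, $\blambda\in\bLambda(\bu)$ and $p\in W_0^{1,s}(\Omega)$ with $\blambda+\bgrad p=\bff_0$, i.e. a solution.

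For uniqueness, suppose $(\bu_1,p_1)$ and $(\bu_2,p_2)$ both solve the problem, with $\blambda_i\in\bLambda(\bu_i)$ and $\blambda_i+\bgrad p_i=\bff_0$. Subtracting the two identities and testing with $\bu_1-\bu_2\in\mathcal{U}_0$ annihilates the pressure term by orthogonality, leaving $\ap{\blambda_1-\blambda_2,\bu_1-\bu_2}=0$; strict monotonicity of $\bLambda$ then forces $\bu_1=\bu_2$. If $\bLambda$ is in addition monovalued, then $\blambda_1=\blambda_2$, whence $\bgrad(p_1-p_2)=0$, so $p_1-p_2$ is constant; as $W_0^{1,s}(\Omega)$ contains no nonzero constants (zero mean when $\mt{Vol}^{d-1}(\Sigma_{\mt{p}})=0$, zero trace on $\Sigma_{\mt{p}}$ otherwise), we conclude $p_1=p_2$.

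I expect the main obstacle to be the maximal monotonicity of the sum $\mathcal{T}=\bLambda+N_{\mathcal{U}}$: because $N_{\mathcal{U}}$ has empty-interior domain, Rockafellar's interior-point condition does not apply directly, and one must instead exploit that $\bLambda$ is everywhere defined, bounded and set-continuous---hence pseudomonotone---to conclude maximal monotonicity of the sum (equivalently, to run a Browder--Brezis surjectivity argument for a bounded, coercive, pseudomonotone operator perturbed by a maximal monotone one), relying on the notions recalled in Appendix~\ref{sec:noti-mult-oper}. A secondary delicate point is the careful handling of the coercivity exponent $\sigma$, in particular the borderline case $\sigma=1$, when checking the coercivity hypothesis of the surjectivity theorem.
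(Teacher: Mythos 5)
Your proposal is correct and follows essentially the same route as the paper: the paper also lifts the divergence constraint (Lemma~\ref{lem:bdry-V0}), identifies gradients of $W_0^{1,s}(\Omega)$ with the annihilator $V^\perp$ of the divergence-free space (Lemma~\ref{lem:isomorphism}), reduces to a coercive maximal monotone inclusion, and closes with Browder's surjectivity theorem (Theorem~\ref{thm:browder}); your uniqueness argument is the paper's verbatim. The only substantive difference is bookkeeping: you pose the inclusion $\bff_0\in\bLambda(\bu)+\bs{N}_{\mathcal{U}}(\bu)$ on all of $\bL^r(\Omega)$, where the dual $\bL^s(\Omega)$ is strictly convex for free, whereas the paper restricts the operator to the subspace $V$ and must separately check that $V^*$ is strictly convex (via the isometry with $\bL^s(\Omega)/V^\perp$). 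But the paper's proof of maximal monotonicity of that restriction passes through exactly your sum: it first shows $\bLambda^*+\bs{N}_V$ is maximal monotone and then restricts. The ``main obstacle'' you flag at the end is in fact not one: the standard sufficient condition for the sum of two maximal monotone operators on a reflexive space to be maximal monotone is that the \emph{interior of the domain of one of them} meet the domain of the other, and since $\bLambda(\bu)\neq\emptyset$ for every $\bu\in\bL^r(\Omega)$ by standing assumption, $\inte(\dom\bLambda)=\bL^r(\Omega)$ already meets $\mathcal{U}$; no pseudomonotonicity or Browder--Brezis detour is needed. This is precisely the step the paper discharges by citing \cite[Corollary~15]{Borwein07} (for the sum) and \cite[Lemma~1]{Voisei11} (for the restriction). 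Your handling of coercivity centered at the lifting, absorbing the correction via $\sigma$-boundedness, matches the paper's computation, including the reduction to $1$-coercivity that Theorem~\ref{thm:browder} requires.
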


\begin{rem}[non-monotone case]
 \label{rem:nonmonotone-case}
  Theorem \ref{thm:wp-monotone} only applies to monotone drag operators, which seem to be the most commonly used in the continuous setting as the examples in Section~\ref{sec:cont-drag-oper} indicate. However, in the discontinuous setting illustrated by the jump operators of the form discussed in Section~\ref{sec:jump-drag-operators}, this is less so since jumping from a low-speed region to a high-speed one could co-occur with a drop in the drag force and thus invalidate monotonicity. When $d>1$, we leave the well-posedness analysis of the non-monotone case to a future investigation, as it involves nonconvex analytical tools which we do not wish to consider here for concision. When $d=1$, these tools are not needed and the non-monotone case is included in Theorem~\ref{thm:wp-dim-one-neumann} below.
\end{rem}

The following theorem ensures well-posedness, or at least existence, for very general drag operators, as opposed to only monotone ones, when $d=1$. Note that we drop the boldface notation when we work specifically in dimension one.

\begin{thm}[well-posedness---dimension one]
  \label{thm:wp-dim-one-neumann}
  Let $d=1$. We identify two cases:
  \begin{enumerate}[label=(\roman*)]
    \item $\mt{Vol}^0(\Sigma_{\mt{v}})>0$. Then, Problem~\ref{pb:weak-form-multiv} has a solution $(u,p)$ such that $u$ is unique. If $\Lambda$ is monovalued, then $p$ also is unique.
    \item $\mt{Vol}^0(\Sigma_{\mt{v}})=0$. Suppose that $\Lambda$ is set-continuous and that $\Lambda(u)$ is a convex set for all $u\in L^r(\Omega)$. Assume moreover that $\Lambda$ is $\sigma$-coercive and $\sigma$-bounded for some $\sigma\geq1$. Then, Problem~\ref{pb:weak-form-multiv} has a solution.
  \end{enumerate}
\end{thm}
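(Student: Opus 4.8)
The plan is to exploit the fact that in dimension one the mass-conservation constraint $\dive u=q$ becomes the scalar ODE $u'=q$, which almost completely determines the flux and reduces the momentum relation to a mere integration for the pressure. Throughout I use the scalar notation for $d=1$ and recall that the momentum identity in Problem~\ref{pb:weak-form-multiv} simply reads $\lambda=-p'+f_0$ in $L^s(\Omega)$, i.e.\ $p'=f_0-\lambda$.

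For case (i) I would first solve the flux equation alone. Since $\Omega$ is an interval and $\Sigma_{\mt v}\neq\emptyset$, testing the weak mass-conservation identity against $W_0^{1,s}(\Omega)$ and integrating by parts shows that $u$ is the unique $W^{1,r}(\Omega)$ function satisfying $u'=q$ together with the value(s) prescribed by $u_0$ on $\Sigma_{\mt v}$; explicitly $u(x)=u(x_\ast)+\int_{x_\ast}^x q$ with $x_\ast$ an endpoint of $\Sigma_{\mt v}$. Crucially this step uses no structural hypothesis on $\Lambda$, which is exactly why monotonicity can be dropped here. With $u$ fixed, $\Lambda(u)\subset L^s(\Omega)$ is a fixed nonempty set; I pick any $\lambda\in\Lambda(u)$ and define $p$ by integrating $p'=f_0-\lambda$, choosing the constant so that the pressure condition (vanishing trace on $\Sigma_{\mt p}$, or the prescribed average when $\mt{Vol}^0(\Sigma_{\mt p})=0$) holds. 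Then $p\in W_0^{1,s}(\Omega)$ and $(u,p)$ solves the problem. Uniqueness of $u$ is built into its construction; if $\Lambda$ is monovalued then $\lambda=\Lambda(u)$ is forced, so $p'$ and the normalizing constant are determined and $p$ is unique.

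Case (ii) is the substantial one. Now $\Sigma_{\mt v}=\emptyset$, so the flux equation yields only $u'=q$ with no boundary datum, leaving a one-parameter family $u_c(x)=c+\int_a^x q$, $c\in\R$. On the other hand $\Sigma_{\mt p}=\p\Omega$ forces $p$ to vanish at both endpoints, so integrating $p'=f_0-\lambda$ over $\Omega$ imposes the single scalar compatibility $\int_\Omega\lambda=\int_\Omega f_0$. The problem therefore reduces to finding $c\in\R$ and $\lambda\in\Lambda(u_c)$ with $\int_\Omega\lambda=\int_\Omega f_0$. I would study the real multifunction $g(c):=\{\int_\Omega\lambda:\lambda\in\Lambda(u_c)\}\subset\R$. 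Convexity of $\Lambda(u_c)$ makes each $g(c)$ an interval; $\sigma$-boundedness makes it a bounded interval $[m(c),M(c)]$; and set-continuity of $\Lambda$, together with the strong continuity of $c\mapsto u_c$ in $L^r(\Omega)$ and of $\lambda\mapsto\int_\Omega\lambda$ on $L^s(\Omega)$, shows that $g$ has closed graph, so that $M$ is upper- and $m$ lower-semicontinuous. Finally $\sigma$-coercivity, applied to $u_c$ (which behaves like the constant $c$ for $|c|$ large), forces $m(c)\to+\infty$ as $c\to+\infty$ and $M(c)\to-\infty$ as $c\to-\infty$.

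It remains to locate $c$ with $\int_\Omega f_0\in[m(c),M(c)]$, which I expect to be the main obstacle: because $\Lambda$ is not assumed monotone, $g$ need not be monotone in $c$, so no ordering argument is available and a genuinely topological statement is needed. The resolution I would use is a connectedness argument rather than a fixed-point theorem. Using the coercive limits, pick $c_0<c_1$ with $M(c_0)<\int_\Omega f_0<m(c_1)$. If $\int_\Omega f_0\notin g(c)$ for every $c\in[c_0,c_1]$, then $[c_0,c_1]$ is the disjoint union of $\{c:M(c)<\int_\Omega f_0\}$ and $\{c:m(c)>\int_\Omega f_0\}$; by the semicontinuity of $M$ and $m$ both sets are open, each is nonempty (containing $c_0$, respectively $c_1$), which contradicts the connectedness of $[c_0,c_1]$. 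Hence some $c^\ast$ satisfies $\int_\Omega f_0\in g(c^\ast)$; choosing the corresponding $\lambda\in\Lambda(u_{c^\ast})$ and setting $p(x)=\int_a^x(f_0-\lambda)$ gives $p\in W_0^{1,s}(\Omega)$ and a solution. The one delicate verification inside this scheme is the closed-graph property of $g$: it hinges on passing to the limit in $\lambda_n\in\Lambda(u_{c_n})$ by using $\sigma$-boundedness to extract a weak-$L^s$ limit, and then invoking set-continuity together with convexity to keep that limit inside $\Lambda(u_{c^\ast})$.
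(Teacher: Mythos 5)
Your proof is correct and follows essentially the same route as the paper's: in both cases the problem is reduced to a scalar one (the paper does this abstractly by showing $V=\{0\}$ when $\mt{Vol}^0(\Sigma_{\mt v})>0$ and $V\cong\R$ when $\mt{Vol}^0(\Sigma_{\mt v})=0$, while you write out the corresponding ODE $u'=q$ and the one-parameter family $u_c$ explicitly), coercivity together with boundedness is used to bracket the target value $\int_\Omega f_0$, and an intermediate-value property of the convex-valued, set-continuous scalar multifunction $c\mapsto g(c)$ finishes the argument. The one substantive difference is that the paper invokes the Darboux property for multivalued maps from~\cite{CK92}, whereas you prove that step by hand with a connectedness argument; that is a nice self-contained substitute. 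One caveat on your version: the claim that set-continuity of $\Lambda$ gives $g$ a \emph{closed graph} (hence $M$ upper- and $m$ lower-semicontinuous) does not follow from the paper's Definition~\ref{defn:lip-op}, which only guarantees the existence of \emph{one} converging selection $\lambda_n\in\Lambda(u_{c_n})$, not that every weak limit of selections lands in $\Lambda(u_c)$; moreover your dichotomy ``either $M(c)<T$ or $m(c)>T$'' tacitly assumes $g(c)$ is closed. Both issues are avoidable: since $g(c)$ is an interval not containing $T=\int_\Omega f_0$, convexity alone forces $g(c)\subset(-\infty,T)$ or $g(c)\subset(T,\infty)$, and the two sets $\{c\st g(c)\subset(-\infty,T)\}$ and $\{c\st g(c)\subset(T,\infty)\}$ are open precisely by the selection form of set-continuity (a selection $t_n\in g(c_n)$ with $t_n\to t\in g(c)$ cannot sit on the wrong side of $T$ in the limit). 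With that rephrasing your connectedness argument goes through under exactly the paper's hypotheses.
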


\subsubsection{Preliminaries}
\label{sec:preliminaries}

Write $V\subset\bL^r(\Omega)$ and $V^\perp\subset\bL^s(\Omega)$ the sets defined as
\begin{equation}
  \label{eq:V}
  \begin{gathered}
    V = \{ \bvarphi \in \bL^r(\Omega) \st \forall\, \psi\in W_0^{1,s}(\Omega),\, \ap{\bgrad \psi,\bvarphi} = 0 \},\\
    V^\perp = \{ \bg \in \bL^s(\Omega) \st \forall\, \bvarphi\in V,\, \ap{\bg,\bvarphi} = 0\}.
  \end{gathered}
\end{equation}
The set $V^\perp\subset\bL^s(\Omega)$ is often referred to as the polar space or annihilator of $V$. Naturally, we equip $V$ and $V^\perp$ with the respective canonical norms $\norm{\cdot}_r$ and $\norm{\cdot}_s$. We have the two lemmas below whose proofs can be found in~\cite{AFM18,FP21}.
\begin{lem}
  \label{lem:isomorphism}
  The gradient map $\bgrad\:W_0^{1,s}(\Omega) \to V^\perp$ is an isomorphism.
\end{lem}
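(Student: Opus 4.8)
The plan is to show that $\bgrad$ is an isometric embedding of $W_0^{1,s}(\Omega)$ into $\bL^s(\Omega)$ whose range is exactly $V^\perp$. Injectivity and the boundedness of the inverse will be immediate from the choice of norm, while surjectivity onto $V^\perp$ will come from a duality (bipolar) argument; the main work is to verify that the range of $\bgrad$ is closed.

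First I would check that $\bgrad$ maps $W_0^{1,s}(\Omega)$ into $V^\perp$ and is bounded below. Well-definedness is immediate: for $\psi\in W_0^{1,s}(\Omega)$ and any $\bvarphi\in V$, the defining property of $V$ gives $\ap{\bgrad\psi,\bvarphi}=0$, so $\bgrad\psi\in V^\perp$. Moreover, by the very definition of the norm on $W_0^{1,s}(\Omega)$ one has $\norm{\bgrad\psi}_s=\norm{\psi}_{W_0^{1,s}(\Omega)}$, so $\bgrad$ is a linear isometry; in particular it is injective, and its inverse on the range is bounded. That $\norm{\bgrad\cdot}_s$ is genuinely a norm, equivalent to the full $W^{1,s}$ norm (whence $W_0^{1,s}(\Omega)$ is complete), rests on the Poincaré inequality on the bounded Lipschitz domain $\Omega$, using either the vanishing trace on $\Sigma_{\mt{p}}$ or the zero-average constraint to rule out nonzero constants. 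Being bounded below, $\bgrad$ has closed range: if $\bgrad\psi_n\to\bg$ in $\bL^s(\Omega)$, then $(\psi_n)$ is Cauchy in $W_0^{1,s}(\Omega)$ by the isometry, hence converges to some $\psi$, and continuity of $\bgrad$ yields $\bg=\bgrad\psi$.

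It remains to prove surjectivity, $\mt{Range}(\bgrad)=V^\perp$, which is the crux. I would exploit the duality between the reflexive spaces $\bL^r(\Omega)$ and $\bL^s(\Omega)$ under the pairing $\ap{\cdot,\cdot}$. Set $M=\mt{Range}(\bgrad)\subseteq\bL^s(\Omega)$. By construction, $V$ is precisely the pre-annihilator ${}^\perp M$ of $M$ in $\bL^r(\Omega)$, so that $V^\perp=({}^\perp M)^\perp$. The bipolar theorem then gives $({}^\perp M)^\perp=\overline{M}$, the closure of $M$ (here reflexivity lets us identify weak and norm closures of the subspace). Since $M$ is closed by the previous step, we conclude $V^\perp=M=\mt{Range}(\bgrad)$.

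Combining the two steps, $\bgrad\:W_0^{1,s}(\Omega)\to V^\perp$ is a linear isometric bijection, hence a topological isomorphism with $\norm{\bgrad^{-1}}=1$. The main obstacle is the surjectivity: one must recognize the definition of $V$ as a pre-annihilator and invoke the bipolar theorem, which in turn forces the verification that $\mt{Range}(\bgrad)$ is closed — and it is there that the Poincaré inequality is essential.
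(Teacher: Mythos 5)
Your proof is correct: the paper itself does not prove this lemma but defers it to~\cite{AFM18,FP21}, and your argument---$\bgrad$ is an isometry by the very choice of the norm on $W_0^{1,s}(\Omega)$, hence injective with closed range, and the range is identified with $V^\perp$ by recognizing $V$ as the pre-annihilator of $\mathrm{Range}(\bgrad)$ and applying the bipolar theorem in the reflexive pair $\bL^r(\Omega)$--$\bL^s(\Omega)$---is precisely the standard route taken in those references. The one ingredient worth making explicit is the Poincar\'e(--Wirtinger or --Friedrichs) inequality underlying both the completeness of $W_0^{1,s}(\Omega)$ under the gradient norm and the injectivity of $\bgrad$; it requires $\Omega$ to be connected (or each connected component to meet $\Sigma_{\mt{p}}$, resp.\ to carry its own mean-zero constraint), an assumption the paper also leaves tacit.
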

\begin{lem}
  \label{lem:bdry-V0}
  There exists a unique $[\hat\bu]\in \bL^r(\Omega)/V$ such that
  \begin{equation*}
    \ap{\bgrad \psi,\hat\bu} = -\int_\Omega q\psi + \int_{\Sigma_{\mt{v}}} u_0\psi \quad \text{for all $\psi\in W_0^{1,s}(\Omega)$},
  \end{equation*}
where $\bL^r(\Omega)/V$ stands for the quotient space of $\bL^r(\Omega)$ by $V$.
\end{lem}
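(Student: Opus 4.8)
The plan is to split the claim into its uniqueness and existence parts, the former being essentially definitional and the latter resting on Lemma~\ref{lem:isomorphism} together with a duality argument. For uniqueness, suppose $\hat\bu_1,\hat\bu_2\in\bL^r(\Omega)$ both satisfy the stated identity. Subtracting, one gets $\ap{\bgrad\psi,\hat\bu_1-\hat\bu_2}=0$ for every $\psi\in W_0^{1,s}(\Omega)$, which is exactly the membership condition $\hat\bu_1-\hat\bu_2\in V$ coming from the definition in~\eqref{eq:V}. Hence $[\hat\bu_1]=[\hat\bu_2]$ in $\bL^r(\Omega)/V$, so the class, if it exists, is unique.

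For existence, I would first check that the right-hand side
\[ L(\psi) := -\int_\Omega q\psi + \int_{\Sigma_{\mt{v}}} u_0\psi \]
defines a bounded linear functional on $W_0^{1,s}(\Omega)$. The volume term is controlled by Hölder's inequality, $\abs{\int_\Omega q\psi}\leq\norm{q}_r\norm{\psi}_s$, followed by a Poincaré inequality (of Friedrichs type when $\mt{Vol}^{d-1}(\Sigma_{\mt{p}})>0$ and of Wirtinger type when the zero-average constraint defining $W_0^{1,s}(\Omega)$ is active) bounding $\norm{\psi}_s$ by $\norm{\bgrad\psi}_s=\norm{\psi}_{W_0^{1,s}(\Omega)}$. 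The boundary term is handled by Hölder on $\Sigma_{\mt{v}}$ together with the trace inequality $\norm{\psi}_{L^s(\p\Omega)}\lesssim\norm{\psi}_{W^{1,s}(\Omega)}$, which combined with the same Poincaré estimate yields $\abs{\int_{\Sigma_{\mt{v}}}u_0\psi}\lesssim\norm{u_0}_{L^r(\Sigma_{\mt{v}})}\norm{\psi}_{W_0^{1,s}(\Omega)}$. Thus $L\in(W_0^{1,s}(\Omega))^*$.

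The core of the argument is then to realize $L$ as $\ap{\bgrad\,\cdot,\hat\bu}$ for some $\hat\bu\in\bL^r(\Omega)$. By Lemma~\ref{lem:isomorphism}, the gradient $\bgrad\:W_0^{1,s}(\Omega)\to V^\perp$ is an isomorphism, so $L\circ\bgrad^{-1}$ is a bounded linear functional on the closed subspace $V^\perp\subset\bL^s(\Omega)$. By the Hahn--Banach theorem it extends to a bounded functional on all of $\bL^s(\Omega)$, and since $s\in(1,\infty)$ makes $\bL^s(\Omega)$ reflexive with $(\bL^s(\Omega))^*\cong\bL^r(\Omega)$ under the pairing $\ap{\cdot,\cdot}$, this extension is represented by some $\hat\bu\in\bL^r(\Omega)$. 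Because $\bgrad\psi\in V^\perp$ and the extension agrees with $L\circ\bgrad^{-1}$ there, evaluating gives $L(\psi)=\ap{\bgrad\psi,\hat\bu}$ for all $\psi\in W_0^{1,s}(\Omega)$, which is the desired identity. Equivalently, one may invoke the closed range theorem: since $\bgrad$ is injective with closed range $V^\perp$, its adjoint $\hat\bu\mapsto\ap{\bgrad\,\cdot,\hat\bu}$ is surjective onto $(W_0^{1,s}(\Omega))^*$, so $L$ automatically lies in its range.

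The steps are largely routine once this functional-analytic framework is in place. The only point requiring genuine care is the boundedness of $L$, and within it the boundary term, which forces one to combine the trace embedding with the correct Poincaré inequality in each of the two cases distinguished by $\mt{Vol}^{d-1}(\Sigma_{\mt{p}})$. The structural heart---surjectivity of the adjoint of $\bgrad$---follows cleanly from Lemma~\ref{lem:isomorphism} and standard $L^s$--$L^r$ duality, so I expect no real obstacle there beyond bookkeeping.
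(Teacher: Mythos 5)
Your argument is correct. The paper itself does not prove Lemma~\ref{lem:bdry-V0}; it defers to~\cite{AFM18,FP21}, so there is no in-text proof to compare against, but your proposal supplies a complete and standard substitute. The uniqueness step is exactly the observation that the difference of two candidates lies in $V$ by the very definition~\eqref{eq:V}, and the existence step correctly reduces to two facts: (a) the right-hand side $L$ is a bounded functional on $\left(W_0^{1,s}(\Omega),\norm{\bgrad\cdot}_s\right)$, which indeed requires the trace inequality together with the appropriate Poincar\'e inequality in each of the two cases distinguished by $\mt{Vol}^{d-1}(\Sigma_{\mt{p}})$ (you rightly flag this as the only delicate point --- note that the Poincar\'e--Wirtinger case tacitly uses connectedness of $\Omega$, which the paper assumes only implicitly); and (b) surjectivity of $\bu\mapsto\ap{\bgrad\cdot,\bu}$ onto $(W_0^{1,s}(\Omega))^*$, which follows from Lemma~\ref{lem:isomorphism} either via Hahn--Banach extension off the closed subspace $V^\perp$ plus $\bL^s$--$\bL^r$ duality, or equivalently via the closed range theorem. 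Both routes you sketch are valid; the only cosmetic remark is that the Hahn--Banach extension is not needed in full generality here, since one could also construct $\hat\bu$ directly by solving an auxiliary Darcy-type problem with data $(q,u_0)$, which is the route more commonly taken in the cited references, but your duality argument is cleaner and equally rigorous.
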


We can now reformulate Problem~\ref{pb:weak-form-multiv} as a problem restricted to $V$. To this end, we first introduce the following definition:
\begin{defn}[restricted drag operator]
  \label{defn:restricted-drag}
  We call \emph{restricted drag operator} the multivalued map $\bLambda_V^*\:V\rightrightarrows V^*$ defined by
  \begin{equation*}
    \bLambda_V^*(\bv) = \{ \blambda^*\in V^* \st \exists\, \blambda\in \bLambda(\hat\bu + \bv),\, \forall\, \bvarphi\in V,\; \blambda^*(\bvarphi) = \ap{\blambda,\bvarphi} \} \quad \text{for all $\bv \in V$},
  \end{equation*}
  where $\hat\bu$ is as in Lemma~\ref{lem:bdry-V0}.
\end{defn}
\noindent Then, we consider a corresponding restricted problem, which we show right away is equivalent to Problem~\ref{pb:weak-form-multiv}:
\begin{pb}[restricted multivalued form]
  \label{pb:restricted-V}
  Find $\bv\in V$ so that there is $\blambda^*\in \bLambda_V^*(\bv)$ satisfying
  \begin{equation*}
    \blambda^*(\bvarphi) = \ap{\bff_0,\bvarphi} \quad \text{for all $\bvarphi\in V$}.
  \end{equation*}
\end{pb}

\begin{lem}
  \label{lem:pb-restr}
  Problems~\ref{pb:weak-form-multiv} and~\ref{pb:restricted-V} are equivalent.
\end{lem}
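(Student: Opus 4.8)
The plan is to establish the equivalence by exhibiting an explicit, reversible correspondence between solutions, using the decomposition $\bu = \hat\bu + \bv$ afforded by Lemma~\ref{lem:bdry-V0} and recovering the pressure through the gradient isomorphism of Lemma~\ref{lem:isomorphism}. The first observation is that the second (mass-conservation) equation in Problem~\ref{pb:weak-form-multiv} is nothing but the statement that $\bu$ represents the class $[\hat\bu]$: subtracting the identity of Lemma~\ref{lem:bdry-V0} turns it into $\ap{\bgrad\psi,\bu-\hat\bu}=0$ for all $\psi\in W_0^{1,s}(\Omega)$, i.e. $\bu-\hat\bu\in V$ by the definition of $V$. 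Hence any admissible $\bu$ can be written uniquely as $\bu=\hat\bu+\bv$ with $\bv\in V$, and conversely every such $\bu$ satisfies the mass-conservation equation.

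For the forward direction, given a solution $(\bu,p)$ of Problem~\ref{pb:weak-form-multiv} with $\blambda\in\bLambda(\bu)$, I would test the momentum equation only against $\bvarphi\in V$. Since $p\in W_0^{1,s}(\Omega)$ and $\bvarphi\in V$, the pressure term $\ap{\bgrad p,\bvarphi}$ vanishes by the definition of $V$, leaving $\ap{\blambda,\bvarphi}=\ap{\bff_0,\bvarphi}$ for all $\bvarphi\in V$. Defining $\blambda^*\in V^*$ by $\blambda^*(\bvarphi)=\ap{\blambda,\bvarphi}$ and recalling $\blambda\in\bLambda(\hat\bu+\bv)$, Definition~\ref{defn:restricted-drag} yields $\blambda^*\in\bLambda_V^*(\bv)$, and the identity above is exactly the equation of Problem~\ref{pb:restricted-V}; thus $\bv$ solves the restricted problem.

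For the converse I would start from $\bv\in V$ solving Problem~\ref{pb:restricted-V} with $\blambda^*\in\bLambda_V^*(\bv)$, unpack Definition~\ref{defn:restricted-drag} to obtain $\blambda\in\bLambda(\hat\bu+\bv)$ with $\blambda^*(\bvarphi)=\ap{\blambda,\bvarphi}$ on $V$, and set $\bu=\hat\bu+\bv$. The restricted equation then reads $\ap{\blambda-\bff_0,\bvarphi}=0$ for all $\bvarphi\in V$, i.e. $\blambda-\bff_0\in V^\perp$. I would invoke Lemma~\ref{lem:isomorphism} to produce the unique $p\in W_0^{1,s}(\Omega)$ with $\bgrad p=\bff_0-\blambda$, whence $\blambda=-\bgrad p+\bff_0$ in $\bL^s(\Omega)$ and the momentum equation holds for all $\bvarphi\in\bL^r(\Omega)$. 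Together with the fact that $\bu=\hat\bu+\bv$ satisfies the mass equation, this shows $(\bu,p)$ solves Problem~\ref{pb:weak-form-multiv}.

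The only genuinely delicate point is the recovery of the pressure in the converse direction: one must recognize that the vanishing of $\blambda-\bff_0$ against all of $V$ places this element in the annihilator $V^\perp$, and then exploit the surjectivity of $\bgrad\:W_0^{1,s}(\Omega)\to V^\perp$ (Lemma~\ref{lem:isomorphism}) to generate $p$, with uniqueness of $p$ coming from injectivity of the same map. Everything else is bookkeeping with the definitions of $V$, $V^\perp$ and $\bLambda_V^*$, together with the elementary remark that testing against the full space $\bL^r(\Omega)$ is equivalent to an identity in $\bL^s(\Omega)$.
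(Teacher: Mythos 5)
Your proof is correct and follows essentially the same route as the paper's: the decomposition $\bu=\hat\bu+\bv$ via Lemma~\ref{lem:bdry-V0}, testing the momentum equation against $V$ to pass to the restricted problem, and recovering the pressure from $\blambda-\bff_0\in V^\perp$ through the gradient isomorphism of Lemma~\ref{lem:isomorphism}. No gaps to report.
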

\begin{proof}
  We first suppose that $(\bu,p)$ is a solution to Problem~\ref{pb:weak-form-multiv}. We decompose $\bu$ as $\bu = \hat\bu + (\bu - \hat\bu) =: \hat\bu + \bv$. By Problem~\ref{pb:weak-form-multiv}, we directly get there exists $\blambda \in \bLambda(\hat\bu+\bv)$ so that
  \begin{equation*}
    \ap{\blambda,\bvarphi} = \ap{\bff_0,\bvarphi} \quad \text{for all $\bvarphi \in V$}.
  \end{equation*}
Moreover, we check that $\ap{\bgrad\psi,\bv} = \ap{\bgrad\psi,\bu} - \ap{\bgrad\psi,\hat\bu} = 0$ for all $\psi\in W_0^{1,s}(\Omega)$, so $\bv\in V$. Then, the map $\blambda^*\in V^*$ defined by $\blambda^*(\bvarphi) = \ap{\blambda,\bvarphi}$ for all $\bvarphi\in V$ satisfies $\blambda^*\in \bLambda_V^*(\bv)$. We deduce that $\bv$ satisfies Problem~\ref{pb:restricted-V}.

  Suppose now that $\bv$ satisfies Problem~\ref{pb:restricted-V} and write $\bu = \hat\bu+\bv$. Then, one can find $\blambda\in\bLambda(\bu)$ so that $\blambda - \bff_0 \in V^{\mathrm{\perp}}$. By Lemma~\ref{lem:isomorphism}, we know $\bgrad$ is an isomorphism from $W_0^{1,s}(\Omega)$ to $V^{\mathrm{\perp}}$, and thus there exists a unique $p\in W_0^{1,s}(\Omega)$ such that
  \begin{equation*}
    \ap{\blambda - \bff_0,\bvarphi} = -\ap{\bgrad p,\bvarphi} \quad \text{for all $\bvarphi\in\bL^r(\Omega)$}.
  \end{equation*}
Furthermore, using $\bv\in V$, we have
\begin{equation*}
  \ap{\bgrad\psi,\bu} = \ap{\bgrad\psi,\hat\bu} + \ap{\bgrad\psi,\bv} = \ap{\bgrad\psi,\hat\bu}  = - \int_\Omega q\psi + \int_{\Sigma_{\mt{v}}} u_0\psi.
\end{equation*}
We therefore get that $(\bu,p)$ satisfies Problem~\ref{pb:weak-form-multiv}.
\end{proof}

\subsubsection{Proof of Theorem~\ref{thm:wp-monotone}}
\label{sec:proof-wp-monotone}

We first show existence and then uniqueness. For the existence, we make use of the following theorem from operator analysis:
\begin{thm}[Browder~{\cite[Theorem~3]{Browder68}}]
  \label{thm:browder}
  Let $X$ be a reflexive real Banach space with strictly convex topological dual $X^*$, and suppose that $A\:X\rightrightarrows X^*$ is maximal monotone and $1$-coercive. Then, the range of $A$ equals $X^*$.
\end{thm}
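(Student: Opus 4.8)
The plan is to derive surjectivity of $A$ from a perturbed surjectivity statement for $A+\varepsilon J$, where $J\:X\to X^*$ is the duality map, and then to pass to the limit $\varepsilon\to0^+$ using coercivity. First I would record the relevant properties of $J$. Since $X$ is reflexive and $X^*$ is strictly convex, the norm of $X$ is smooth, so $J$ is single-valued; moreover $J$ is monotone, bounded (with $\norm{Jx}=\norm{x}$), demicontinuous, and satisfies $\ap{Jx,x}=\norm{x}^2$, hence it is itself maximal monotone and coercive. Fixing $f\in X^*$, I would then solve the family of inclusions $f\in Ax_\varepsilon+\varepsilon Jx_\varepsilon$ for $\varepsilon>0$, the presence of $\varepsilon J$ supplying the coercivity needed to make each perturbed problem solvable, and recover a genuine solution of $f\in Ax$ in the limit.

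The technical heart is the perturbation lemma: for every maximal monotone $A$ and every $\varepsilon>0$, one has $\operatorname{range}(A+\varepsilon J)=X^*$. To prove this I would pass through the Yosida approximants $A_\mu$ of $A$ (built from $J$), which are single-valued, monotone, bounded and demicontinuous on the reflexive space $X$. For fixed $\varepsilon$ and each $\mu>0$, the operator $A_\mu+\varepsilon J$ is single-valued, monotone, demicontinuous and coercive, the coercivity coming from the $\varepsilon J$ term since $\ap{\varepsilon Jx,x}=\varepsilon\norm{x}^2$; the single-valued Minty--Browder surjectivity theorem — itself proved by a Galerkin reduction to finite dimensions, where a corollary of Brouwer's fixed-point theorem produces zeros, followed by Minty's monotonicity trick to identify weak limits — then yields $x_\mu$ with $A_\mu x_\mu+\varepsilon Jx_\mu=f$. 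Uniform bounds on $\norm{x_\mu}$ again follow from the $\varepsilon J$ coercivity, and letting $\mu\to0^+$ I would use the resolvent identities defining $A_\mu$ together with the maximal monotonicity (demiclosedness) of $A$ to extract a limit solving $f\in Ax+\varepsilon Jx$.

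With the perturbation lemma in hand, for each $\varepsilon>0$ I obtain $x_\varepsilon$ and $y_\varepsilon\in Ax_\varepsilon$ with $y_\varepsilon+\varepsilon Jx_\varepsilon=f$. Pairing with $x_\varepsilon$ gives $\ap{y_\varepsilon,x_\varepsilon}=\ap{f,x_\varepsilon}-\varepsilon\norm{x_\varepsilon}^2\leq\norm{f}\,\norm{x_\varepsilon}$, so $\ap{y_\varepsilon,x_\varepsilon}/\norm{x_\varepsilon}\leq\norm{f}$ whenever $x_\varepsilon\neq0$; by the $1$-coercivity of $A$ this forces $\{x_\varepsilon\}$ to stay bounded as $\varepsilon\to0^+$. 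Consequently $\norm{\varepsilon Jx_\varepsilon}=\varepsilon\norm{x_\varepsilon}\to0$, so $y_\varepsilon=f-\varepsilon Jx_\varepsilon\to f$ strongly, while reflexivity lets me assume $x_\varepsilon\rightharpoonup x$ along a subsequence. Since the graph of a maximal monotone operator is closed under strong--weak (and weak--strong) convergence, the pairs $(x_\varepsilon,y_\varepsilon)$ with $x_\varepsilon\rightharpoonup x$ and $y_\varepsilon\to f$ yield $(x,f)\in\graph A$, i.e.\ $f\in Ax$. As $f$ was arbitrary, $\operatorname{range}(A)=X^*$.

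The main obstacle is the perturbation lemma $\operatorname{range}(A+\varepsilon J)=X^*$ for multivalued $A$: making the Yosida-approximation argument rigorous requires the resolvents and Yosida approximants of $A$ to be well defined and single-valued, and the limit $\mu\to0^+$ to be controlled, and the whole scheme leans on two nontrivial facts — the single-valued Minty--Browder theorem and the demiclosedness of maximal monotone graphs — both of which ultimately rest on Minty's trick. The role of the hypotheses is then transparent: reflexivity supplies the weak compactness used at every limit, strict convexity of $X^*$ guarantees that $J$ is single-valued and demicontinuous, and $1$-coercivity is exactly what bounds $\{x_\varepsilon\}$ in the final passage.
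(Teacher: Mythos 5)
The paper offers no proof of this statement---it is imported verbatim from Browder's 1968 article---so your attempt can only be measured against the classical argument, and your outer skeleton is exactly that argument: perturb by $\varepsilon J$, pair with $x_\varepsilon$ to get $\ap{y_\varepsilon,x_\varepsilon}\leq\norm{f}\,\norm{x_\varepsilon}$, invoke $1$-coercivity in the paper's sense (Definition~\ref{defn:coercive-op}) to force $c(\norm{x_\varepsilon})\leq\norm{f}$ and hence boundedness, send $y_\varepsilon=f-\varepsilon Jx_\varepsilon\to f$ strongly, extract $x_\varepsilon\rightharpoonup x$ by reflexivity, and close the graph by the strong--weak demiclosedness of maximal monotone operators. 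All of that is correct, and your accounting of where each hypothesis enters (reflexivity for weak compactness, strict convexity of $X^*$ for single-valuedness and demicontinuity of $J$, $1$-coercivity for the final bound) is accurate.

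The genuine gap is in your proof of the perturbation lemma $\operatorname{range}(A+\varepsilon J)=X^*$: the route through Yosida approximants is circular. For a multivalued maximal monotone $A$ on a Banach space, the resolvent---and hence $A_\mu$---is \emph{defined} by solving $0\in J(x_\mu-x)+\mu A x_\mu$, and solvability of this inclusion for every $x$ is, after translating $A$, precisely an instance of the perturbation lemma you are trying to establish. (In Hilbert space this is no obstacle because Minty's theorem, proved independently, supplies the resolvent; for $A=\bp\D$ a subdifferential one can use Moreau--Yosida infimal convolution; neither escape is available for a general multivalued $A$ on a Banach space.) The non-circular proofs attack the multivaluedness directly at the finite-dimensional level: run a Galerkin scheme over finite-dimensional subspaces and replace your ``corollary of Brouwer's theorem'' by the Debrunner--Flor extension lemma (or Kakutani's fixed-point theorem) to solve the projected multivalued inclusions, then pass to the limit with Minty's trick; this is how Browder, and later Rockafellar in his characterization ``$A$ monotone is maximal iff $\operatorname{range}(J+\lambda A)=X^*$ for all $\lambda>0$,'' proceed. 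A secondary omission: the resolvent machinery needs $X$ itself strictly convex (for well-posedness and uniqueness in the auxiliary problems), which the hypotheses do not grant; the standard fix is Asplund's renorming theorem---renorm the reflexive $X$ so that both $X$ and $X^*$ are strictly convex, noting that maximal monotonicity, coercivity and range are invariant under equivalent renorming. Your final limiting step needs only what the stated hypotheses already give, so the repair is localized entirely to the lemma.
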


\paragraph{Existence}
We wish to apply Theorem~\ref{thm:browder} to $\bLambda_V^*$ and then use Lemma~\ref{lem:pb-restr}. 

We first note that $\bLambda_V^*$ is $1$-coercive. Indeed, for $(\bv,\blambda^*)\in \graph(\bLambda_V^*)$ and $\bu:=\hat\bu+\bv$, and for some $\blambda\in \bL^s(\Omega)$ such that $(\bu,\blambda)\in \graph(\bLambda)$, Hölder's inequality leads to
\begin{align*}
  \blambda^*(\bv) &= \ap{\blambda,\bu} - \ap{\blambda,\hat\bu}\\
                    &\geq c(\norm{\bu}_r)\norm{\bu}_r^{\sigma} - \norm{\blambda}_s\norm{\hat\bu}_r\\
                    &\geq \left( c(\norm{\bu}_r) - C\norm{\hat\bu}_r \right)\norm{\bu}_r^{\sigma} - C\norm{\hat\bu}_r\\
                  &= \left( c(\norm{\hat\bu+\bv}_r) - C\norm{\hat\bu}_r \right)\norm{\hat\bu+\bv}_r^{\sigma-1}\norm{\hat\bu+\bv}_r - C\norm{\hat\bu}_r,
\end{align*}
where $c\:\R_+\to\R$ is such that $c(a)\to\infty$ as $a\to\infty$ and $C>0$ (cf. Definitions~\ref{defn:coercive-op} and~\ref{defn:bounded-op}); rearranging terms and redefining $c$ adequately, we find
\begin{equation*}
  \blambda^*(\bv) = \ap{\blambda,\bv} \geq c(\norm{\bv}_r)\norm{\bv}_r,
\end{equation*}
where again $c\:\R_+\to\R$ satisfies $c(a)\to\infty$ as $a\to\infty$.

To prove maximal monotonicity of $\bLambda_V^*$, let us define $\bLambda^* \: \bL^r(\Omega) \rightrightarrows (\bL^r(\Omega))^*$ by
\begin{equation*}
    \bLambda^*(\bu) = \{ \bs{\ell}^*\in (\bL^r(\Omega))^* \st \exists\, \blambda\in \bLambda(\hat\bu + \bu),\, \forall\, \bvarphi\in \bL^r(\Omega),\; \bs{\ell}^*(\bvarphi) = \ap{\blambda,\bvarphi} \},
  \end{equation*}
for all $\bu \in \bL^r(\Omega)$. Note that $\bLambda_V^*$ is the restriction of $\bLambda^*$ to $V$ in the sense that, for all $\bv\in V$, there holds 
\begin{equation*}
  \bLambda_V^*(\bv) = \{ \blambda^*\in V^*\st \exists\, \bs{\ell}^* \in \bLambda^*(\bv),\, \forall\, \bvarphi\in V,\; \blambda^*(\bvarphi) = \bs{\ell}^*(\bvarphi)\}.
\end{equation*}
As we assume that $\bLambda$ is maximal monotone, so is $\bLambda^*$. Thus, writing $\bs{N}_V\: \bL^r(\Omega) \rightrightarrows (\bL^r(\Omega))^*$ the normal cone of $V$, i.e., $\bs{N}_V(\bu) = \{\bs{n}^*\in (\bL^r(\Omega))^* \st \forall\, \bv\in V,\;  \bs{n}^*(\bu) \geq \bs{n}^*(\bv) \}$ for all $\bu\in\bL^r(\Omega)$, it follows from~\cite[Corollary~15]{Borwein07} that the operator sum $\bLambda^* + \bs{N}_V$ is also maximal monotone. Therefore, by \cite[Lemma~1]{Voisei11}, we get that $\bLambda_V^*$ is maximal monotone.

To show that $V^*$ is strictly convex, i.e., that the unit ball in $V^*$ is strictly convex, note that $V^*$ is isometrically isomorphic with $\bL^s(\Omega)/V^\perp$ (the quotient space of $\bL^s(\Omega)$ by $V^\perp$) via the linear map $T\:\bL^s(\Omega)/V^\perp\to V^*$ defined by
\begin{equation*}
  T([\bg])(\bv) = \ap{\bg,\bv} \quad \text{for all $[\bg]\in \bL^s(\Omega)/V^\perp$ and $\bv\in V$}.
\end{equation*}
Then, because $\bL^s(\Omega)/V^\perp$ is strictly convex~\cite[Proposition~3.2]{Klee59}, we get that $V^*$ is also strictly convex. 

We can now use Theorem~\ref{thm:browder}. Write $\bff_V\in V^*$ the map defined by $\bff_V(\bvarphi) = \ap{\bff_0,\bvarphi}$ for all $\bvarphi\in V$. Then, Theorem~\ref{thm:browder} yields the existence of $\bv\in V$ such that there exists $\blambda^*\in\bLambda_V^*(\bv)$ with $\blambda^*(\bvarphi) = \bff_V(\bvarphi) = \ap{\bff_0,\bvarphi}$ for all $\bvarphi\in V$, which means that $\bv$ is solution to Problem~\ref{pb:restricted-V}. Finally, by Lemma~\ref{lem:pb-restr}, we conclude that Problem~\ref{pb:weak-form-multiv} has a solution.

\paragraph{Uniqueness}
When $\bLambda$ is strictly monotone, the uniqueness of the flux is direct by Definition~\ref{defn:monotone-op}. If in addition $\bLambda$ is monovalued, then the unique solution $\bv$ to Problem~\ref{pb:restricted-V} yields a unique $\blambda$ with $(\hat\bu+\bv,\blambda)\in \graph(\bLambda)$ such that $\blambda - \bff_0\in V^\perp$. We then deduce the uniqueness of the pressure by following the second part of the proof of Lemma~\ref{lem:pb-restr}.

\paragraph{Remark}
From the above proof, we note that if $q$, $u_0$ and $\Sigma_v$ are such that
  \begin{equation*}
    \int_\Omega q\psi = \int_{\Sigma_{\mt{v}}} u_0\psi \quad \text{for all $\psi\in W_0^{1,s}(\Omega)$},
  \end{equation*}
then the $\sigma$-boundedness condition on $\bLambda$ in Theorem~\ref{thm:wp-monotone} can be removed, since in the case we can choose $\hat\bu = \bnull$.

\subsubsection{Proof of Theorem~\ref{thm:wp-dim-one-neumann}}
\label{sec:proof-wp-dim-one-neumann}

Suppose here that $d=1$.

\paragraph{Case $\mt{Vol}^{d-1}(\Sigma_{\mt{v}})>0$}
We know that $V$ and $V^*$ are trivial, that is, $V=\{0\}$ and $V^*=\{0\}$ (cf.~\cite[Theorem~4.11]{FP21}). 

Problem~\ref{pb:restricted-V} is trivially and uniquely solved for $v=0$. Then, Lemma~\ref{lem:pb-restr} yields the existence of a solution to Problem~\ref{pb:weak-form-multiv} with the uniqueness of the flux. The uniqueness of the pressure when $\Lambda$ is monovalued directly follows from the second part of the proof of Lemma~\ref{lem:pb-restr}.
 
\paragraph{Case $\mt{Vol}^{d-1}(\Sigma_{\mt{v}})=0$}
It holds that $V$ and $V^*$ are isomorphic with $\R$ (cf.~\cite[Theorem~4.11]{FP21}). Consequently, for all $v\in \R$, the set $\Lambda_V^*(v)$ is isomorphic with a subset of $\R$, which we denote by $\Lambda_\R^*(v)$. Problem~\ref{pb:restricted-V} simplifies into the following: find $v\in \R$ such that $\hat f_0 \in \Lambda_\R^*(v)$, where $\hat f_0$ stands for $(1/|\Omega|) \int_\Omega f_0$.

Following the same arguments as in Section~\ref{sec:proof-wp-monotone}, we know that $\Lambda_\R^*$ is $1$-coercive. Thus, there exists $v_1\in\R$ such that $\lambda^*<\hat f_0$ for all $\lambda^*\in\Lambda_\R^*(v_1)$ and there exists $v_2\in\R$ such that $\lambda^*>\hat f_0$ for all $\lambda^*\in\Lambda_\R^*(v_2)$. By the assumed continuity and convexity property of $\Lambda$, the multivalued operator $\Lambda_\R^*$ is continuous and $\Lambda_\R^*(v)$ is convex for all $v\in \R$. Thus, $\Lambda_\R^*$ has the Darboux property (cf.~\cite[Theorems~1 and~2]{CK92}) and so there exists $v\in[v_1,v_2]$ such that $\hat f_0 \in \Lambda_\R^*(v)$. Hence Problem~\ref{pb:restricted-V} admits a solution and Lemma~\ref{lem:pb-restr} gives the existence of a solution to Problem~\ref{pb:weak-form-multiv}.

\section{Well-posedness for dissipative drag operators}
\label{sec:well-posedn-diss}

We pick $r\in[2,\infty)$ and let $s\in (1,2]$ be the dual exponent of $r$, that is, $s=r/(r-1)$. As in Section~\ref{sec:math-fram}, we fix $q\in L^r(\Omega)$, $\bff\in \bL^s(\Omega)$, $u_0\in L^{r}(\Sigma_{\mt{v}})$ and $p_0\in W^{\frac{1}{r},s}(\Sigma_{\mt{p}})$, and we let $\bLambda\:\bL^r(\Omega)\rightrightarrows \bL^s(\Omega)$ be such that, for all $\bu\in\bL^r(\Omega)$, there holds $\bLambda(\bu)\neq \emptyset$. 

Furthermore, we write $\bs{M}_{\mt{sym}}^+$ the set of $d\times d$, real, symmetric, positive definite matrices. Given $\mathbb{M}\in\bs{M}_{\mt{sym}}^+$, we write $\norm{\cdot}_{\mathbb{M}}$ the Euclidean norm weigthed by $\mathbb{M}$, that is, for all $\bx\in\R^d$,
\begin{equation*}
  \norm{\bx}_{\mathbb{M}} = \sqrt{\mathbb{M}\bx\cdot\bx};
\end{equation*}
denoting the spectrum of $\mathbb{M}$ by $\mt{Sp}(\mathbb{M})$, we have
\begin{equation*}
  \sqrt{\min \mt{Sp}(\mathbb{M})}\norm{\bx} \leq \norm{\bx}_{\mathbb{M}}\leq \sqrt{\max \mt{Sp}(\mathbb{M})}\norm{\bx}.
\end{equation*}
When $\mathbb{M}\:\Omega \to \bs{M}_{\mt{sym}}^+$ and $\bs{\phi}\:\Omega\to \R^d$, we write $\norm{\bs{\phi}}_{\mathbb{M}}$ the map $\bx\mapsto \norm{\bs{\phi}(\bx)}_{\mathbb{M}(\bx)}$ and we set
\begin{equation*}
  \sup\mathbb{M} = \sup_{\bx\in\Omega} \max \mt{Sp}(\mathbb{M}(\bx)) \quad \text{and} \quad \inf\mathbb{M} = \inf_{\bx\in\Omega} \min \mt{Sp}(\mathbb{M(\bx)}).
\end{equation*}

\subsection{Underlying assumption}
\label{sec:underly-assumpt}

We wish to apply the well-posedness results of Section~\ref{sec:well-posedness} to a specific type of drag operators which we refer to as \emph{dissipative}, since they can be derived from an underlying functional called the dissipation (cf. Appendix~\ref{sec:functionals} for the notions on functionals used below). In~\cite{SV01_2}, the authors consider such dissipative operators, and, as a multivalued generalization of their model, we consider the following assumption:

\begin{assu}[dissipative drag operator]
  \label{assu:diss-drag}
  Fix $\mathbb{D}\:\Omega \to \bs{M}_{\mt{sym}}^+$ so that $\sup\mathbb{D}<\infty$ and $\inf\mathbb{D}>0$. Let $\bLambda$ be of the form
  \begin{equation*}
    \bLambda(\bu) = \Phi(\norm{\bu}_{\mathbb{D}}^2)\,\mathbb{D}\bu \quad \text{for all $\bu\in \bL^r(\Omega)$},
  \end{equation*}
  where the multivalued map $\Phi\:\R_+\rightrightarrows \R_+$ is set-continuous and such that $\Phi(a) \neq \emptyset$ for all $a\geq0$. Let $c,C>0$ be such that
  \begin{equation}
    \label{eq:cont-upper-bound}
    ca^{\frac{r-2}{2}} \leq \phi \leq C \left( 1+ a^{\frac{r-2}{2}} \right) \quad \text{for all $(a,\phi) \in\graph(\Phi)$},
  \end{equation}
  and let $\Psi\:\R_+ \to \R$ be locally Lipschitz continuous with $\p\Psi = \Phi$.
\end{assu}

Recall the subdifferential chain rule which, under Assumption~\ref{assu:diss-drag}, ensures that $\p(\Psi\circ(\cdot)^2)(a) =2a\Phi(a^2)$ for all $a\geq0$ (cf.~\cite{LSM20} for instance).

\subsubsection{Preliminary check}
\label{sec:preliminary-check}

We want check that, when $\bLambda$ satisfies Assumption~\ref{assu:diss-drag}, the upper bound in \eqref{eq:cont-upper-bound} ensures that $\bLambda$ indeed maps $\bL^r(\Omega)$ to $\bL^s(\Omega)$. For this, take $\bu\in\bL^r(\Omega)$ and $\blambda\in\bLambda(\bu)$, fix $\bx\in\Omega$, let $\phi\in \Phi(\norm{\bu(\bx)}_{\mathbb{D}(\bx)}^2)$ be such that $\blambda(\bx)=\phi\,\mathbb{D}(\bx)\bu(\bx)$, write $M=\sup\mathbb{D}$, and compute
\begin{align*}
  \norm{\blambda(\bx)} &\leq \phi \norm{\mathbb{D}(\bx)\bu(\bx)} \leq M \norm{\bu(\bx)}\phi\\
  &\leq M C \norm{\bu(\bx)} \left( 1+\norm{\bu(\bx)}_{\mathbb{D}(\bx)}^{r-2} \right)\\
  &\leq M C \norm{\bu(\bx)} \left( 1+M^{\frac{r-2}{2}} \norm{\bu(\bx)}^{r-2} \right)\\
  &\leq M C \max\left\{ 1,M^{\frac{r-2}{2}} \right\} \left( \norm{\bu(\bx)}+\norm{\bu(\bx)}^{r-1} \right),
\end{align*}
where $C>0$ is as in \eqref{eq:cont-upper-bound}. Then, taking this computation to the power of $s$, we get
\begin{align*}
  \norm{\blambda(\bx)}^s &\leq \left( M C \max\left\{ 1,M^{\frac{r-2}{2}} \right\} \right)^s \left( \norm{\bu(\bx)}+\norm{\bu(\bx)}^{r-1} \right)^s\\
  &\leq 2^{1-s} \left( M C \max\left\{ 1,M^{\frac{r-2}{2}} \right\} \right)^s \left( \norm{\bu(\bx)}^r+\norm{\bu(\bx)}^s \right),
\end{align*}
where the second inequality is obtained using the identity $(a+b)^s \leq 2^{1-s}(a^s+b^s)$ whenever $a,b\geq0$. Hence
\begin{equation}
  \label{eq:bounded-lambda}
  \norm{\blambda(\bx)}^s \leq C' \left( \norm{\bu(\bx)}^r + \norm{\bu(\bx)}^s \right),
\end{equation}
with $C'>0$ defined appropriately. Since $r\geq2$, and so $r\geq s$, this shows that indeed $\blambda\in\bL^s(\Omega)$ and so $\bLambda\:\bL^r(\Omega)\rightrightarrows\bL^s(\Omega)$.

\subsubsection{Dissipation}
\label{sec:dissipation}

Supposing that Assumption~\ref{assu:diss-drag} holds, we define the functional $\D\:\bL^r(\Omega) \to \R$, called the \emph{dissipation}, by
\begin{equation}
  \label{eq:dissipation-cont}
  \D(\bu) = \frac12 \int_\Omega \Psi(\norm{\bu}_{\mathbb{D}}^2) \quad \text{for all $\bu\in\bL^r(\Omega)$}.
\end{equation}
Physically, the dissipation represents the mechanical power lost through drag by the fluid to the rock matrix (cf.~\cite{SV01}). We want to check that the dissipation is indeed well defined under Assumption~\ref{assu:diss-drag}. In fact, we have the following lemma:
\begin{lem}
  \label{lem:psi-diss}
  Let Assumption~\ref{assu:diss-drag} hold. Then, $\Psi$ is nondecreasing, $(r/2)$-bounded and $(r/2)$-coercive. Moreover, the dissipation $\D$, as given in \eqref{eq:dissipation-cont}, is well defined and there exist $\tilde c_0,\tilde c_1,\tilde C>0$ such that
  \begin{equation}
    \label{eq:bounds-D}
    \tilde c_1\norm{\bu}_r^r - \tilde c_0 \leq \D(\bu) \leq \tilde C(1+\norm{\bu}_r^r) \quad \text{for all $\bu\in\bL^r(\Omega)$}
  \end{equation}
\end{lem}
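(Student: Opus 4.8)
The plan is to derive pointwise bounds on $\Psi$ from the subdifferential bounds in \eqref{eq:cont-upper-bound} and then integrate them over $\Omega$, converting the $\mathbb{D}$-weighted norm to the Euclidean one by means of the spectral estimates. I would begin with monotonicity: since $\Psi$ is locally Lipschitz, it is differentiable almost everywhere by Rademacher's theorem, and at each point of differentiability the derivative $\Psi'(a)$ lies in $\p\Psi(a)=\Phi(a)\subset\R_+$; hence $\Psi'\geq 0$ a.e., and as $\Psi$ is absolutely continuous on compact intervals it is nondecreasing.

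For the growth of $\Psi$, I would use absolute continuity again to write $\Psi(a)-\Psi(0)=\int_0^a \Psi'(t)\,\der t$ with $\Psi'(t)\in\Phi(t)$ for a.e.\ $t$. Inserting the bounds $ct^{(r-2)/2}\leq\Psi'(t)\leq C(1+t^{(r-2)/2})$ coming from \eqref{eq:cont-upper-bound} and using $\int_0^a t^{(r-2)/2}\,\der t=\tfrac{2}{r}a^{r/2}$ yields
\begin{equation*}
  \Psi(0)+\tfrac{2c}{r}a^{r/2}\leq \Psi(a)\leq \Psi(0)+C\left(a+\tfrac{2}{r}a^{r/2}\right).
\end{equation*}
Since $r\geq 2$ gives $a\leq 1+a^{r/2}$, the right-hand side is bounded by a constant multiple of $1+a^{r/2}$, which is precisely the $(r/2)$-boundedness of $\Psi$, while the left-hand side delivers its $(r/2)$-coercivity.

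For the dissipation, measurability of $\bx\mapsto \Psi(\norm{\bu(\bx)}_{\mathbb{D}(\bx)}^2)$ follows from continuity of $\Psi$ together with measurability of $\bu$ and $\mathbb{D}$. Combining the pointwise bounds on $\Psi$ with the spectral estimates $(\inf\mathbb{D})^{r/2}\norm{\bu(\bx)}^r\leq \norm{\bu(\bx)}_{\mathbb{D}(\bx)}^r\leq (\sup\mathbb{D})^{r/2}\norm{\bu(\bx)}^r$ shows that the integrand is controlled above and below by affine functions of $\norm{\bu(\bx)}^r$; since $\bu\in\bL^r(\Omega)$ and $\Omega$ is bounded, the integrand is integrable and $\D(\bu)$ is finite, hence well defined. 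Integrating these same bounds over $\Omega$ and collecting constants (using $\inf\mathbb{D}>0$ below and $\sup\mathbb{D}<\infty$ above, and absorbing the term $\tfrac12\Psi(0)\abs{\Omega}$ into $\tilde c_0$ and $\tilde C$) produces \eqref{eq:bounds-D} with $\tilde c_1=\tfrac{c}{r}(\inf\mathbb{D})^{r/2}$ and $\tilde c_0,\tilde C>0$ chosen accordingly.

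The only genuinely delicate step is the identity $\Psi(a)-\Psi(0)=\int_0^a \Psi'(t)\,\der t$ with $\Psi'(t)\in\Phi(t)$: it requires invoking that a locally Lipschitz function is absolutely continuous on compacts, so that the fundamental theorem of calculus applies, together with the fact that its classical derivative, wherever it exists, belongs to the subdifferential $\p\Psi=\Phi$ and therefore inherits the bounds of \eqref{eq:cont-upper-bound}. Once this is in place, everything else reduces to tracking constants through the spectral inequalities.
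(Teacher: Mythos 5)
Your proof is correct and follows essentially the same route as the paper: both hinge on the Newton--Leibniz identity $\Psi(a)=\Psi(0)+\int_0^a\phi(b)\,\der b$ with $\phi(b)\in\Phi(b)$, from which monotonicity, the $(r/2)$-bounds and the integrated estimates \eqref{eq:bounds-D} follow exactly as you describe. The only difference is that the paper obtains this identity by citing \cite[Theorem~1.3]{Chieu10}, whereas you rederive it from Rademacher's theorem, absolute continuity of locally Lipschitz functions, and the inclusion of the classical derivative in the Clarke subdifferential --- a valid, self-contained substitute.
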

\begin{proof}
  Note that, thanks to~\cite[Theorem~1.3]{Chieu10}, we have
\begin{equation}
  \label{eq:funda-calc}
  \Psi(a) = \Psi(0) + \int_0^a \phi(b)\d b \quad \text{for all $a\geq0$},
\end{equation}
where $\phi\:\R_+\to\R_+$ is a map with $\phi(b) \in \Phi(b)$ for all $b\geq0$. Since, by assumption, $\Phi(b)\subset\R_+$ for all $b\geq0$, we get by \eqref{eq:funda-calc} that $\Psi$ is nondecreasing. Moreover, by the right-hand inequality in \eqref{eq:cont-upper-bound} and again \eqref{eq:funda-calc}, we yield
\begin{equation}
  \label{eq:psi-bdd}
  \abs{\Psi(a)} \leq \abs{\Psi(0)} + Ca\left(1+a^{\frac{r-2}{2}}\right)\leq \left(\abs{\Psi(0)} + 2C\right)\left(1+a^{\frac r2}\right) \quad \text{for all $a\geq0$},
\end{equation}
which shows that $\Psi$ is $(r/2)$-bounded. Similarly, using this time the left-hand inequality in \eqref{eq:cont-upper-bound},
\begin{equation}
  \label{eq:psi-coerc}
  \Psi(a) \geq \Psi(0) + \frac{2c}{r} a^{\frac{r}{2}} \quad \text{for all $a\geq0$},
\end{equation}
which gives the $(r/2)$-coercivity of $\Psi$.

The inequality in \eqref{eq:psi-bdd} directly yields that $\D$ is well defined and that the right-hand inequality in \eqref{eq:bounds-D} holds. The left-hand inequality is obtained using \eqref{eq:psi-coerc}. 
\end{proof}

Let us now establish the relation between $\D$ and $\bLambda$, which also justifies why an operator satisfying Assumption~\ref{assu:diss-drag} may be called dissipative:
\begin{lem}
  \label{lem:drag-subdiff}
  Let Assumption~\ref{assu:diss-drag} hold. Then, $\D$ is locally Lipschitz continuous and $\bp\D \subset \bLambda$. If $\Psi\circ(\cdot)^2$ is convex on $\R_+$, then $\D$ is convex and $\bLambda=\bp\D$. If furthermore $\Psi$ is strictly increasing, then $\D$ is strictly convex.
\end{lem}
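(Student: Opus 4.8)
The plan is to reduce every assertion to a pointwise statement about the integrand $\bs{\xi}\mapsto\tfrac12\Psi(\norm{\bs{\xi}}_{\mathbb{D}}^2)$ and then integrate over $\Omega$; throughout I write $Q(\bs{\xi})=\norm{\bs{\xi}}_{\mathbb{D}}^2=\mathbb{D}\bs{\xi}\cdot\bs{\xi}$. For the \emph{local Lipschitz continuity}, I would fix a bounded set of $\bL^r(\Omega)$ and take $\bu,\bv$ in it. Symmetry of $\mathbb{D}$ gives the pointwise identity $Q(\bu)-Q(\bv)=\mathbb{D}(\bu+\bv)\cdot(\bu-\bv)$, so $\abs{Q(\bu)-Q(\bv)}\leq(\sup\mathbb{D})\norm{\bu+\bv}\norm{\bu-\bv}$. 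Since $\Psi$ is locally Lipschitz with $\p\Psi=\Phi$, the upper bound in \eqref{eq:cont-upper-bound} controls its local Lipschitz constant, giving $\abs{\Psi(Q(\bu))-\Psi(Q(\bv))}\leq C(1+\max\{\norm{\bu},\norm{\bv}\}^{r-2})\abs{Q(\bu)-Q(\bv)}$ pointwise. The factor multiplying $\norm{\bu-\bv}$ then grows at most like $\norm{\cdot}^{r-1}$, hence sits in $L^s(\Omega)$ with norm controlled by $\norm{\bu}_r,\norm{\bv}_r$, and Hölder's inequality with the conjugate exponents $r,s$ yields $\abs{\D(\bu)-\D(\bv)}\leq L\norm{\bu-\bv}_r$ with $L$ depending only on the bounded set.

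For the inclusion $\bp\D\subset\bLambda$, I would write $\D(\bu)=\int_\Omega j(\bx,\bu(\bx))$ with $j(\bx,\bs{\xi})=\tfrac12\Psi(\norm{\bs{\xi}}_{\mathbb{D}(\bx)}^2)$, locally Lipschitz in $\bs{\xi}$ by the previous step, and invoke Clarke's theorem on subdifferentiation under the integral sign, which gives $\bp\D(\bu)\subset\{\blambda\in\bL^s(\Omega)\st\blambda(\bx)\in\p_{\bs{\xi}}j(\bx,\bu(\bx))\text{ a.e.}\}$. Since $Q$ is $C^1$ with $\bgrad Q=2\mathbb{D}\bs{\xi}$ and $\p\Psi=\Phi$, the multidimensional analogue of the chain rule recalled after Assumption~\ref{assu:diss-drag} gives $\p_{\bs{\xi}}j(\bx,\bs{\xi})=\Phi(\norm{\bs{\xi}}_{\mathbb{D}}^2)\,\mathbb{D}\bs{\xi}$, which is exactly the pointwise description of $\bLambda(\bu)$; hence $\bp\D\subset\bLambda$.

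For the \emph{convex case}, I would write the integrand as $(\Psi\circ(\cdot)^2)(\norm{\bs{\xi}}_{\mathbb{D}})$: by Lemma~\ref{lem:psi-diss} the map $\Psi\circ(\cdot)^2$ is nondecreasing and, by hypothesis, convex, while $\norm{\cdot}_{\mathbb{D}}$ is a norm, so the composition is convex and thus $\D$ is convex. For the reverse inclusion $\bLambda\subset\bp\D$, I would take $\blambda\in\bLambda(\bu)$, so $\blambda(\bx)=\phi(\bx)\,\mathbb{D}(\bx)\bu(\bx)$ for a measurable selection $\phi(\bx)\in\Phi(\norm{\bu(\bx)}_{\mathbb{D}}^2)$; by the chain rule above, $\blambda(\bx)$ is a convex subgradient of $\bs{\xi}\mapsto\tfrac12\Psi(\norm{\bs{\xi}}_{\mathbb{D}}^2)$ at $\bu(\bx)$, so the pointwise subgradient inequality holds and, integrated over $\Omega$, yields $\D(\bw)\geq\D(\bu)+\ap{\blambda,\bw-\bu}$ for all $\bw\in\bL^r(\Omega)$, i.e. $\blambda\in\bp\D(\bu)$. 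Combined with the previous paragraph, $\bLambda=\bp\D$.

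For \emph{strict convexity}, since $\mathbb{D}$ is positive definite, $Q$ is strictly convex on $\R^d$, so for $\bs{\xi}_1\neq\bs{\xi}_2$ and $t\in(0,1)$ one has the strict inequality $Q(t\bs{\xi}_1+(1-t)\bs{\xi}_2)<tQ(\bs{\xi}_1)+(1-t)Q(\bs{\xi}_2)$; applying the strictly increasing $\Psi$ and using the convexity already proved, the plan is to deduce strict convexity of the integrand and then of $\D$ (equality in the convexity of $\D$ forces pointwise equality a.e., which this would exclude). I expect this to be the main obstacle: the passage through $\Psi$ preserves the strict inequality only at the level of arguments, and to convert it into a strict inequality for $\Psi\circ Q$ one needs $\Psi$ not to be affine on the relevant range---along two positively proportional vectors $\norm{\cdot}_{\mathbb{D}}$ is linear, so any strictness there must come entirely from $\Psi\circ(\cdot)^2$. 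I would therefore check carefully whether strict monotonicity of $\Psi$ really suffices or whether the argument in fact requires $\Psi\circ(\cdot)^2$ to be strictly convex; this is where I would concentrate the effort, since the naive chain ``strictly increasing $\circ$ strictly convex'' need not itself be strictly convex.
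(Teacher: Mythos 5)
Your treatment of the first three assertions is correct and essentially the paper's own route: the paper also reduces everything to the integrand $\eta(\bs{\xi})=\tfrac12\Psi(\norm{\bs{\xi}}_{\mathbb{D}}^2)$, gets local Lipschitz continuity by citing a result of Giner--Penot together with the bound \eqref{eq:bounds-D} (your hands-on H\"older estimate with the identity $Q(\bu)-Q(\bv)=\mathbb{D}(\bu+\bv)\cdot(\bu-\bv)$ is a perfectly good, more self-contained substitute), obtains $\bp\D\subset\bLambda$ from the same subdifferentiation-under-the-integral theorem, and proves $\bLambda\subset\bp\D$ pointwise; the only cosmetic difference is that you integrate the convex subgradient inequality directly, whereas the paper integrates the directional-derivative inequality and invokes Fatou's lemma. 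Both are fine.

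Your suspicion about the last assertion is well founded, and you should not try to make the argument work as stated: under the hypotheses of the lemma ($\Psi\circ(\cdot)^2$ convex, $\Psi$ strictly increasing) the integrand need not be strictly convex. Writing $g=\Psi\circ(\cdot)^2$, one has $\eta=\tfrac12\, g\circ\norm{\cdot}_{\mathbb{D}}$, and $\norm{\cdot}_{\mathbb{D}}$ is affine along rays through the origin, so $\eta$ is strictly convex only if $g$ is strictly convex; but $g$ can be affine on an interval while $\Psi$ remains strictly increasing and \eqref{eq:cont-upper-bound} holds. For instance, with $r=2$ and $\mathbb{D}=\mathbb{I}$, take $\Phi=\{\phi\}$ with $\phi\equiv\tfrac{\alpha}{2}$ on $[0,1]$, $\phi(a)=\tfrac{\alpha}{2\sqrt{a}}$ on $[1,4]$ and $\phi\equiv\tfrac{\alpha}{4}$ on $[4,\infty)$: then $\Psi'=\phi>0$, $g'(b)=2b\phi(b^2)$ is nondecreasing but constant on $[1,2]$, so $g$ is convex yet affine there, and $\D$ fails to be strictly convex (test it on two constant fluxes that are positive multiples of one another). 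So strict monotonicity of $\Psi$ alone does not suffice; the natural fix is to assume $\Psi\circ(\cdot)^2$ strictly convex, or alternatively $\Psi$ itself convex and strictly increasing, in which case your route through the strict convexity of $Q$ does close, since then $\Psi(Q(t\bs{\xi}_1+(1-t)\bs{\xi}_2))<\Psi(tQ(\bs{\xi}_1)+(1-t)Q(\bs{\xi}_2))\leq t\Psi(Q(\bs{\xi}_1))+(1-t)\Psi(Q(\bs{\xi}_2))$. Note that the paper's own proof dismisses this step with ``the analogous argument,'' which has exactly the gap you identified; all the concrete examples in Section~\ref{sec:examples} have $\Psi\circ(\cdot)^2$ strictly convex, and in Section~\ref{sec:regul-probl-diss} the stronger hypothesis ``$\Psi$ convex'' is assumed, so the downstream results are unaffected.
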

\begin{proof}
  Define $\eta\:\R^d\to \R$ by
\begin{equation*}
  \eta(\ba) = \frac12 \Psi(\norm{\ba}_{\mathbb{D}}^2) \quad \text{for all $\ba\in\R^d$},
\end{equation*}
so that, in particular, 
\begin{equation*}
  \D(\bu) = \int_\Omega \eta(\bu) \quad \text{for all $\bu\in\bL^r(\Omega)$}.
\end{equation*} 

The Lipschitz continuity of $\D$ is obtained from~\cite[Proposition~12]{GP17} in combination with the right-hand inequality in \eqref{eq:bounds-D}. For all $\bu\in\bL^r(\Omega)$, note that
\begin{equation}
  \label{eq:subdiff-1}
  \bLambda(\bu) = \left\{ \blambda\in\bL^s(\Omega) \st \forall\, \bx\in \Omega,\; \blambda(\bx)\in \bp\eta(\bu(\bx)) \right\};
\end{equation}
the fact that $\bp\D(\bu) \subset \bLambda(\bu)$ then directly follows from~\cite[Section~3]{Giner98}.

Assume now $\Psi\circ(\cdot)^2$ is convex on $\R_+$. Since $\Psi$ is nondecreasing (cf. Lemma~\ref{lem:psi-diss}), we know that $\Psi\circ(\cdot)^2$ is nondecreasing on $\R_+$ and we get that $\eta$ is convex, and so $\D$ is convex. (In the case when we also have that $\Psi$ is strictly increasing, the analogous argument leads to $\D$ strictly convex.) Let now $\blambda\in\bLambda(\bu)$ for some $\bu\in\bL^r(\Omega)$. Then, from \eqref{eq:subdiff-1}, we have $\blambda(\bx)\in \bp\eta(\bu(\bx))$ for all $\bx\in\Omega$; by Proposition~\ref{prop:clarke}, for all $\bv\in\bL^r(\Omega)$, there holds 
\begin{equation*}
  \liminf_{\delta\downarrow 0} \frac{\eta(\bu(\bx)+\delta \bv(\bx)) - \eta(\bu(\bx))}{\delta} \geq \blambda(\bx)\cdot\bv(\bx).
\end{equation*}
Taking the integral over $\Omega$ of the above and applying Fatou's lemma, we get
\begin{equation*}
  \liminf_{\delta\downarrow 0} \int_\Omega \frac{\eta(\bu(\bx)+\delta \bv(\bx)) - \eta(\bu(\bx))}{\delta} \d\bx \geq \int_\Omega \blambda(\bx)\cdot\bv(\bx) \d\bx,
\end{equation*}
and so
\begin{equation*}
  \liminf_{\delta\downarrow 0} \frac{\D(\bu+\delta \bv) - \D(\bu)}{\delta} \geq \int_\Omega \blambda(\bx)\cdot\bv(\bx) \d\bx,
\end{equation*}
which shows that $\blambda\in\bp\D(\bu)$, i.e., $\bLambda(\bu)\subset \bp\D(\bu)$, which concludes the proof.
\end{proof}

\subsection{Results}
\label{sec:results}

We now present the well-posedness corollaries following from Theorems~\ref{thm:wp-monotone} and~\ref{thm:wp-dim-one-neumann} under the assumption of a dissipative drag force. We split the results depending on whether the dissipation is convex or not.

\begin{cor}[well-posedness---convex case]
  \label{cor:scal-drag-coeff}
  Let Assumption~\ref{assu:diss-drag} hold and $\Psi\circ(\cdot)^2$ be convex on $\R_+$. Then, Problem~\ref{pb:weak-form-multiv} has a solution $(\bu,p)$. If moreover $\Psi$ strictly increasing, then $\bu$ is unique, and if it is also differentiable, then $(\bu,p)$ is unique.
\end{cor}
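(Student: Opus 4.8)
The plan is to deduce Corollary~\ref{cor:scal-drag-coeff} directly from the abstract well-posedness results, namely Theorem~\ref{thm:wp-monotone}, by verifying that a dissipative drag operator whose dissipation is convex satisfies all the hypotheses of that theorem. The bridge is Lemma~\ref{lem:drag-subdiff}, which under the convexity assumption on $\Psi\circ(\cdot)^2$ gives the identity $\bLambda = \bp\D$ with $\D$ convex. A subdifferential of a convex, locally Lipschitz functional is a textbook example of a maximal monotone operator, so monotonicity and maximality come essentially for free; the remaining work is to confirm the quantitative coercivity and boundedness conditions and to handle the uniqueness refinements.

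First I would record that, by Lemma~\ref{lem:drag-subdiff}, the convexity hypothesis yields $\bLambda = \bp\D$, and since $\D$ is convex and (by the same lemma) locally Lipschitz, hence continuous, its subdifferential is maximal monotone by the classical Rockafellar theorem on subdifferentials of proper, lower semicontinuous convex functions. Thus $\bLambda$ is maximal monotone. Next I would extract the $\sigma$-coercivity and $\sigma$-boundedness with $\sigma = r - 1 \geq 1$ (since $r\geq 2$). The boundedness follows from the preliminary check culminating in \eqref{eq:bounded-lambda}: integrating that pointwise bound over $\Omega$ gives $\norm{\blambda}_s \leq C''(1 + \norm{\bu}_r^{r-1})$ for any $\blambda\in\bLambda(\bu)$, which is exactly $(r-1)$-boundedness. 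For coercivity, I would pair $\blambda$ with $\bu$ and use the left-hand bound $ca^{(r-2)/2}\leq \phi$ from \eqref{eq:cont-upper-bound}: for $\blambda(\bx)=\phi\,\mathbb{D}(\bx)\bu(\bx)$ one computes $\blambda(\bx)\cdot\bu(\bx) = \phi\norm{\bu(\bx)}_{\mathbb{D}(\bx)}^2 \geq c\,\norm{\bu(\bx)}_{\mathbb{D}(\bx)}^r \geq c(\inf\mathbb{D})^{r/2}\norm{\bu(\bx)}^r$; integrating gives $\ap{\blambda,\bu}\geq c'\norm{\bu}_r^r$, i.e.\ $(r-1)$-coercivity. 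With maximal monotonicity, $(r-1)$-coercivity and $(r-1)$-boundedness in hand, Theorem~\ref{thm:wp-monotone} with $\sigma = r-1$ delivers existence of a solution $(\bu,p)$.

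For the uniqueness refinements, I would invoke the last sentences of Lemma~\ref{lem:drag-subdiff} together with Theorem~\ref{thm:wp-monotone}. If $\Psi$ is strictly increasing, that lemma shows $\D$ is strictly convex, and the subdifferential of a strictly convex functional is strictly monotone; Theorem~\ref{thm:wp-monotone} then gives uniqueness of the flux $\bu$. If in addition $\Psi$ is differentiable, then $\Phi = \p\Psi$ is single-valued, so $\bLambda(\bu) = \Phi(\norm{\bu}_{\mathbb{D}}^2)\mathbb{D}\bu$ is monovalued; the monovalued clause of Theorem~\ref{thm:wp-monotone} then yields uniqueness of the full pair $(\bu,p)$.

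The main obstacle, or at least the only place requiring genuine care, is checking that the coercivity and boundedness constants interact correctly with the weighted norms $\norm{\cdot}_{\mathbb{D}}$ and that $\sigma = r-1$ is the right exponent to feed into Theorem~\ref{thm:wp-monotone} (the theorem's coercivity is phrased in terms of $\norm{\bu}_r^\sigma$, and the scalar computation above naturally produces the power $r$, so I must track that $\ap{\blambda,\bu}\geq c'\norm{\bu}_r^r = c'\norm{\bu}_r^{\sigma}\norm{\bu}_r$ matches the definition with a coercivity function $c(a)=c'a$ tending to infinity). Everything else is a direct translation through Lemma~\ref{lem:drag-subdiff} and the cited operator-theoretic facts, so no separate argument from scratch is needed.
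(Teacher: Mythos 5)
Your proposal is correct and follows essentially the same route as the paper's own proof: maximal monotonicity via Lemma~\ref{lem:drag-subdiff} and Rockafellar's theorem, $(r-1)$-coercivity from the lower bound in \eqref{eq:cont-upper-bound}, $(r-1)$-boundedness from the computation leading to \eqref{eq:bounded-lambda}, and then Theorem~\ref{thm:wp-monotone} for existence and the uniqueness refinements. The only cosmetic difference is that you deduce monovaluedness of $\bLambda$ directly from single-valuedness of $\Phi=\p\Psi$ when $\Psi$ is differentiable, whereas the paper phrases it through differentiability of $\D$; the content is the same.
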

\begin{proof}
  We want to use Theorem~\ref{thm:wp-monotone} on $\bLambda$. 

We start by proving the coercivity and boundedness of $\bLambda$. Let $\bu\in\bL^r(\Omega)$ and $\blambda\in\bLambda(\bu)$. Let $\phi\:\Omega\to\R$ be such that $\phi(\bx)\in \Phi(\norm{\bu(\bx)}_{\mathbb{D}(\bx)}^2)$ and $\blambda(\bx) = \phi(\bx)\, \mathbb{D}(\bx)\bu(\bx)$ for all $\bx\in\Omega$. Then, 
\begin{align*}
  \ap{\blambda,\bu} &= \int_\Omega \phi\, \mathbb{D}\bu\cdot\bu \geq \int_\Omega \phi \norm{\bu}_{\mathbb{D}}^2\geq c \int_\Omega \norm{\bu}_{\mathbb{D}}^r\\
  &\geq c\, (\inf\mathbb{D})^{\frac r2}  \int_\Omega \norm{\bu}^r = c\, (\inf\mathbb{D})^{\frac r2}\norm{\bu}_r\norm{\bu}_r^{r-1}.
\end{align*}
Hence $\bLambda$ is $(r-1)$-coercive, with $r-1\geq 1$ (since, in this section, $r\geq2$ by assumption). Furthermore, following the same steps leading to \eqref{eq:bounded-lambda}, we get
\begin{align*}
  \norm{\blambda}_s^s &\leq C' \int_\Omega \left( \norm{\bu}^r + \norm{\bu}^s \right) = C' \left( \norm{\bu}_r^r + \int_\Omega \norm{\bu}^s \right)\\
  &\leq C' \left( \norm{\bu}_r^r + \abs{\Omega}^{\frac{r-2}{r-1}} \norm{\bu}_r^{\frac{r}{r-1}} \right),
\end{align*}
so that, redefining $C'$ as needed, we yield
\begin{equation}
  \label{eq:lambda-s}
  \norm{\blambda}_s \leq C' \left( \norm{\bu}_r^{r-1} + \abs{\Omega}^{\frac{r-2}{r}} \norm{\bu}_r \right) \leq C'\left(1+\abs{\Omega}^{\frac{r-2}{r}}\right)\left(1+\norm{\bu}_r^{r-1}\right),
\end{equation}
and $\bLambda$ is $(r-1)$-bounded. 

We now turn to showing maximal monotonicity of $\bLambda$. Note that, since the dissipation $\D$ is convex and lower semicontinuous (cf. Lemma~\ref{lem:drag-subdiff}), the subdifferential of $\D$ is maximal monotone by~\cite[Theorem~A]{Rockafellar70}. Then, Lemma~\ref{lem:drag-subdiff} gives that $\bLambda$ is  maximal monotone. Theorem~\ref{thm:wp-monotone} thus shows that Problem~\ref{pb:weak-form-multiv} has a solution. 

For the uniqueness part, notice that strict convexity of $\D$ implies strict monotonicity of $\bLambda$ and that differentiability of $\D$ yields monovaluedness of $\bLambda$. Theorem~\ref{thm:wp-monotone} then directly gives the result.
\end{proof}

We now turn to the case when the dissipation is not convex. As mentioned in Remark~\ref{rem:nonmonotone-case}, we only cover $d=1$ and leave $d>1$ to an upcoming work.

\begin{cor}[existence---nonconvex case]
  \label{cor:noncvx-drag}
  Let $d=1$ and let $\Lambda$ satisfy Assumption~\ref{assu:diss-drag}. Then, Problem~\ref{pb:weak-form-multiv} has a solution.
\end{cor}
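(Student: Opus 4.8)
The plan is to invoke Theorem~\ref{thm:wp-dim-one-neumann}, checking its hypotheses under Assumption~\ref{assu:diss-drag} and distinguishing its two cases. If $\mt{Vol}^0(\Sigma_{\mt{v}})>0$, there is nothing more to do: part~(i) of Theorem~\ref{thm:wp-dim-one-neumann} requires only that $\Lambda\:L^r(\Omega)\rightrightarrows L^s(\Omega)$ have nonempty values, which is exactly the content of the preliminary check of Section~\ref{sec:preliminary-check} (cf.~\eqref{eq:bounded-lambda}), so existence of a solution with unique flux follows at once. The whole difficulty is therefore concentrated in the case $\mt{Vol}^0(\Sigma_{\mt{v}})=0$, where I must verify that $\Lambda$ is convex-valued, that it is $\sigma$-coercive and $\sigma$-bounded for some $\sigma\geq1$, and that it is set-continuous.

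Two of these three properties are inherited from computations already made. For coercivity and boundedness, I would repeat verbatim the estimates in the proof of Corollary~\ref{cor:scal-drag-coeff}: those rely solely on the growth bounds \eqref{eq:cont-upper-bound} and never on the convexity of $\Psi\circ(\cdot)^2$, and they yield that $\Lambda$ is $(r-1)$-coercive and $(r-1)$-bounded, with $r-1\geq1$ since $r\geq2$ in this section. Convex-valuedness is likewise automatic: as $\Psi$ is locally Lipschitz and $\Phi=\p\Psi$ is its (Clarke) subdifferential, each set $\Phi(a)$ is a convex subset of $\R_+$; since a map $\lambda$ belongs to $\Lambda(u)$ precisely when $\lambda(x)\in\Phi(\norm{u(x)}_{\mathbb{D}}^2)\,\mathbb{D}(x)u(x)$ for a.e.\ $x$, and each fibre is a scalar multiple of the convex set $\Phi(\norm{u(x)}_{\mathbb{D}}^2)$, the set $\Lambda(u)$ of such measurable selections is convex in $L^s(\Omega)$.

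The remaining and genuinely new point---the main obstacle---is the set-continuity of $\Lambda\:L^r(\Omega)\rightrightarrows L^s(\Omega)$. I would treat $\Lambda$ as a multivalued superposition (Nemytskii) operator generated by the Carathéodory multifunction $g(x,\xi)=\Phi(\norm{\xi}_{\mathbb{D}(x)}^2)\,\mathbb{D}(x)\xi$, which is measurable in $x$, set-continuous in $\xi$ (because $\Phi$ is set-continuous by Assumption~\ref{assu:diss-drag} while $\xi\mapsto\norm{\xi}_{\mathbb{D}(x)}^2$ and $\xi\mapsto\mathbb{D}(x)\xi$ are continuous, using $\inf\mathbb{D}>0$ and $\sup\mathbb{D}<\infty$), and satisfies the $L^s$-growth bound established in the preliminary check. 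Given $u_n\to u$ in $L^r(\Omega)$, I would pass to a subsequence converging a.e.\ with an $L^r$-dominating function, whence \eqref{eq:cont-upper-bound} furnishes an $L^s$-dominating function for the admissible selections. Upper set-continuity then follows from the upper semicontinuity of $\Phi$ together with dominated convergence, whereas lower set-continuity requires, for a fixed target selection of $\Lambda(u)$, the construction of measurable selections of $\Lambda(u_n)$ converging to it in $L^s(\Omega)$, built pointwise from the lower semicontinuity of $\Phi$ and then measurably via a selection argument. This lower-semicontinuous half is the technical heart of the matter; once it is in place, all the hypotheses of Theorem~\ref{thm:wp-dim-one-neumann}(ii) are satisfied and the existence of a solution follows.
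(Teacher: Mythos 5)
Your proposal follows the paper's proof exactly: apply Theorem~\ref{thm:wp-dim-one-neumann}, inherit the $(r-1)$-coercivity and $(r-1)$-boundedness from the computations in the proof of Corollary~\ref{cor:scal-drag-coeff} (which indeed use only \eqref{eq:cont-upper-bound} and not convexity), deduce convex-valuedness of $\Lambda(u)$ from the convexity of the Clarke subdifferential $\Phi=\p\Psi$ (Proposition~\ref{prop:clarke}), and deduce set-continuity of $\Lambda$ from that of $\Phi$. The only difference is one of detail: the paper declares the set-continuity of $\Lambda$ ``direct'' where you sketch a full Nemytskii-operator argument, and since Definition~\ref{defn:lip-op} only requires one sequence of selections converging to \emph{some} element of $\Lambda(u)$, your lower-semicontinuity half proves more than is strictly needed.
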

\begin{proof}
  We want to apply Theorem~\ref{thm:wp-dim-one-neumann} to $\Lambda$. Since the coercivity and boundedness of $\Lambda$ follow exactly as in the proof of Corollary~\ref{cor:scal-drag-coeff}, we only have to show that $\Lambda$ is set-continuous and $\Lambda(u)$ is convex for all $u\in L^r(\Omega)$.

Since $\Phi$ is assumed to be set-continuous, the set-continuity of $\Lambda$ is direct. Moreover, since $\Phi = \p\Psi$, it is a fact that $\Phi(a)$ is a convex set for all $a\geq0$ (cf. Proposition~\ref{prop:clarke}) and it follows that $\Lambda(u)$ is convex for all $u\in L^r(\Omega)$. Theorem~\ref{thm:wp-dim-one-neumann} then concludes the proof.
\end{proof}

\subsection{Examples}
\label{sec:examples}

Let us discuss some examples of drag operators covered by Corollaries~\ref{cor:scal-drag-coeff} and~\ref{cor:noncvx-drag}.

\subsubsection{Continuous case}
\label{sec:continuous-case}

Here, for any $\bu\in\bL^r(\Omega)$, we write $\bLambda(\bu) = \{\blambda(\bu)\}$ our monovalued operator. Corollary~\ref{cor:scal-drag-coeff} covers any case of the form
\begin{equation}
  \label{eq:drag-cont-tensor-sum}
  \blambda(\bu) = \left(\sum_{i=0}^m \lambda_i \norm{\bu}_{\mathbb{D}}^i\right) \mathbb{D}\bu,
\end{equation}
where $\mathbb{D}\:\Omega \to \bs{M}_{\mt{sym}}^+$ satisfies $\sup\mathbb{D}<\infty$ and $\inf\mathbb{D}>0$, and $\lambda_i\geq0$ for all $i\in\{0,\dots,m-1\}$ and $\lambda_m>0$, in which case $r=m+2$. When $\mathbb{D} \equiv\mathbb{I}$, this operator simplifies into
\begin{equation*}
  \blambda(\bu) =\left( \sum_{i=0}^m \lambda_i \norm{\bu}^i\right) \bu.
\end{equation*}
Note that the dissipation (cf. \eqref{eq:dissipation-cont}) associated with \eqref{eq:drag-cont-tensor-sum} is the following:
\begin{equation*}
  \D(\bu) = \sum_{i=0}^m \frac{\lambda_i}{i+2}\int_\Omega \norm{\bu}_{\mathbb{D}}^{i+2}.
\end{equation*}
Let us highlight the fact that the drag operator in \eqref{eq:drag-cont-tensor-sum} results from Taylor expanding the function $\phi$ up to order $m$ around $0$ in
\begin{equation*}
  \blambda(\bu) = \phi(\norm{\bu}_{\mathbb{D}})\mathbb{D}\bu,
\end{equation*}
and setting $\lambda_i = \phi^{(i)}(0)/(i!)$ for all $i\in\{0,\dots,m\}$. 

\subsubsection{Jump case}
\label{sec:jump-case}

In this section, we use again the notation introduced in Section~\ref{sec:jump-drag-operators}, though with a slight generalization. Indeed, we define the transition zones and speed regions (cf.~\eqref{eq:trans-speed-regions}) with respect to a weighted Euclidean norm: fix $\mathbb{D}\:\Omega\to\bs{M}_{\mt{sym}}^+$ satisfying $\sup\mathbb{D}<\infty$ and $\inf\mathbb{D}>0$ and, given $\bu\in\bL^r(\Omega)$, redefine the transition zones $\{\Gamma_j(\bu)\}_{j=1}^{n-1}$ and $\{\Omega_j(\bu)\}_{j=1}^n$ and speed regions according to
\begin{equation}
  \label{eq:regions}
  \begin{gathered}
    \Gamma_j(\bu) = \{\bx\in\Omega\st \norm{\bu(\bx)}_{\mathbb{D}(\bx)} = \bar u_j\}, \qquad j\in\{1,\dots,n-1\},\\
    \Omega_j(\bu) = \{\bx\in\Omega\st \norm{\bu(\bx)}_{\mathbb{D}(\bx)} \in (\bar u_{j-1},\bar u_j)\}, \qquad j\in\{1,\dots,n\},
  \end{gathered}
\end{equation}
with the additional convention that $\Gamma_n(\bu)=\emptyset$. In view of this, consider the example when $\bLambda$ is of the form
\begin{equation}
  \label{eq:jump-complete}
  \bLambda(\bu) = \left(\sum_{j=1}^nS_j(\norm{\bu}_{\mathbb{D}})\sum_{i=0}^{m_j} \lambda_{ij} \norm{\bu}_{\mathbb{D}}^i\right) \mathbb{D}\bu,
\end{equation}
where $\{m_j\}_{j=1}^n\subset\N$, $\{S_j\}_{j=1}^n$ is as in \eqref{eq:Sj-1}-\eqref{eq:Sj-2}, and, for all $j\in\{1,\dots,n\}$ and $i\in\{0,\dots,m_j\}$, we have $\lambda_{ij}\geq 0$ and $\lambda_{m_jj}>0$. In this case, $r=m_n+2$.

We distinguish two cases: that when the jump through each transition zone is nondecreasing, and that when this does not hold.

\paragraph{Nondecreasing jump}

Corollary~\ref{cor:scal-drag-coeff} covers any drag operator of the form \eqref{eq:jump-complete} provided that the jumps through the transition zones are nondecreasing:
\begin{equation}
  \label{eq:jump-incr}
  \sum_{i=0}^{m_j} \lambda_{ij}\bar u_j^i \leq \sum_{i=0}^{m_{j+1}} \lambda_{i,j+1}\bar u_j^i \quad \text{for all $j\in\{1,\dots,n-1\}$}.
\end{equation}
Indeed, in this case, the associated dissipation is convex, namely,
\begin{equation*}
  \D(\bu) = \sum_{j=1}^n \int_{\Omega_j\cup\Gamma_j(\bu)} \left(c_j + \sum_{i=0}^{m_j} \frac{\lambda_{ij}}{i+2}\norm{\bu}_{\mathbb{D}}^{i+2}\right),
\end{equation*}
where, for all $j\in\{1,\dots,n\}$, the real scalar $c_j$ is an integration constant ensuring the local Lipschitz continuity of the integrand of $\D$ across the transition zones:
\begin{equation}
  \label{eq:cj}
  c_1=0 \quad \text{and} \quad c_{j+1} = c_j + \sum_{i=0}^m \frac{\lambda_{ij}-\lambda_{i,j+1}}{i+2}\, \bar u_j^{i+2} \quad \text{for all $j\in\{1,\dots,n-1\}$},
\end{equation}
where $m=\max\{m_j,m_{j+1}\}$, and $\lambda_{mj}=0$ if $m_j<m$ and $\lambda_{m,j+1}=0$ if $m_{j+1}<m$.

This case includes, for example, the ``double Darcy'' operator
\begin{equation}
  \label{eq:DD}
  \bLambda_{\mt{D/D}}(\bu) = \begin{cases} \lambda_{01}\bu & \text{if $\norm{\bu} < \bar u$},\\ [\lambda_{01},\lambda_{02}]\,\bu & \text{if $\norm{\bu} = \bar u$},\\ \lambda_{02}\bu & \text{if $\norm{\bu}> \bar u$}, \end{cases} 
\end{equation}
where $\bar u=\bar u_1$ and $0<\lambda_{01}\leq \lambda_{02}$, and the triple-regime operator
\begin{equation}
  \label{eq:DDFF}
  \bLambda_{\mt{D/DF/F}}(\bu) = \begin{cases} \lambda_{01}\bu & \text{if $\norm{\bu} < \bar u_1$},\\ [\lambda_{01},\lambda_{02}+\lambda_{12}\bar u_1]\, \bu & \text{if $\norm{\bu} = \bar u_1$},\\ \lambda_{02}\bu + \lambda_{12}\norm{\bu}\bu & \text{if $\bar u_1 < \norm{\bu} < \bar u_2$},\\ [\lambda_{02}+\lambda_{12}\bar u_2,\lambda_{23}\bar u_2^2]\,\bu & \text{if $\norm{\bu} = \bar u_2$},\\ \lambda_{23}\norm{\bu}^2\bu & \text{if $\norm{\bu} > \bar u_2$}, \end{cases} 
\end{equation}
where $0<\lambda_{01}\leq \lambda_{02}+\lambda_{12}\bar u_1$ and $\lambda_{02}+\lambda_{12}\bar u_2\leq \lambda_{23}\bar u_2^2$.

\paragraph{Increasing jump}

When $\bLambda$ is of the form \eqref{eq:jump-complete} but the nondecreasing condition \eqref{eq:jump-incr} does not hold, Corollary~\ref{cor:scal-drag-coeff} does not apply. If $d=1$, then Corollary~\ref{cor:noncvx-drag} gives us at least existence; if $d>1$, then the question remains open and, as already mentioned, we leave this case for future research. Thus, when $d=1$, the operators given in \eqref{eq:DD} and \eqref{eq:DDFF} yield existence of solutions even when the ordering restrictions on the coefficients $\lambda_{01}$, $\lambda_{02}$, $\lambda_{12}$, $\lambda_{23}$ are not satisfied.

\subsubsection{Sum case}
\label{sec:sum-case}

By~\cite[Theorem~1]{Rockafella70_2}, the finite sum of maximal monotone operators with domain $\bL^r(\Omega)$ stays maximal monotone. Thus, any finite sum of drag operators discussed in Sections~\ref{sec:continuous-case} and~\ref{sec:jump-case} is still covered by Corollary~\ref{cor:scal-drag-coeff}.

For instance, Corollary~\ref{cor:scal-drag-coeff} includes the following continuous form:
\begin{equation}
  \label{eq:drag-cont}
  \blambda(\bu) = \lambda_0\mathbb{D}_0\bu + \lambda_1\norm{\bu}_{\mathbb{D}_1}^\gamma \mathbb{D}_1\bu,
\end{equation}
where $\gamma>0$, and $\mathbb{D}_0,\mathbb{D}_1\:\Omega \to \bs{M}_{\mt{sym}}^+$ are possibly different with $\sup\mathbb{D}_0,\sup\mathbb{D}_1<\infty$ and $\inf \mathbb{D}_0 + \inf\mathbb{D}_1 >0$, and $\lambda_0,\lambda_1\geq0$ and $\lambda_0+\lambda_1>0$. Taking $\lambda_0=1$ and $\lambda_1=0$, the operator in \eqref{eq:drag-cont} becomes Darcy's law ($r=2$); taking $\lambda_0=0$, $\lambda_1=1$ and $\mathbb{D}_1 = \lambda \mathbb{I}$ with $\lambda\:\Omega\to(0,\infty)$ and $\inf\lambda>0$, it becomes Forchheimer's generalized law ($r=\gamma+2$); taking $\gamma=1$, $\lambda_0= \lambda_1=1$ and $\mathbb{D}_1 = \lambda \mathbb{I}$ with $\lambda$ as just discussed, it becomes Darcy--Forchheimer law ($r=3$). We also cover the continuous, sublinear operator in \eqref{eq:law-sublinear}:
\begin{equation*}
  \blambda(\bu) = \frac{\lambda_1\bu}{1+\lambda_2\norm{\bu}} + \norm{\bu}_{\mathbb{D}}^\gamma \mathbb{D}\bu,
\end{equation*}
where $\gamma\geq0$, $\lambda_1,\lambda_2\:\Omega\to(0,\infty)$ and $\mathbb{D}\:\Omega \to \bs{M}_{\mt{sym}}^+$ verifies $\sup\mathbb{D}<\infty$ and $\inf \mathbb{D} >0$; here, $r=\gamma+2$.

By adding a continuous Darcy operator to a jump operator, we see that Corollary~\ref{cor:scal-drag-coeff} further covers the jump operator given in \eqref{eq:jump-DDF}, which we reformulate now in our multivalued setting and in generalized weighted norm:
\begin{equation*}
  \bLambda_{\mt{D/DF}}(\bu) = \mathbb{D}_0\bu + \begin{cases} \bnull & \text{if $\norm{\bu}_{\mathbb{D}_1} < \bar u$},\\ [0,\lambda\bar u]\,\mathbb{D}_1\bu & \text{if $\norm{\bu}_{\mathbb{D}_1} = \bar u$},\\ \lambda\norm{\bu}_{\mathbb{D}_1}\mathbb{D}_1\bu & \text{if $\norm{\bu}_{\mathbb{D}_1} > \bar u$}, \end{cases} 
\end{equation*}
where $\bar u = \bar u_1$, $\lambda>0$, and $\mathbb{D}_0$ and $\mathbb{D}_1$ are as above; here, $r=3$ and the term $\mathbb{D}_0\bu$ plays the role of a background drag force.

\section{Regularized problem for dissipative operators}
\label{sec:regul-probl-diss}

We would like to use classical numerical schemes to solve Problem~\ref{pb:weak-form-multiv}, such as Picard iterations combined with the Raviart--Thomas or the mixed virtual element methods (cf. Section~\ref{sec:num-appr}). To this end, we propose first to appoximate Problem~\ref{pb:weak-form-multiv} by a monovalued problem obtained from a convolutional regularization of the dissipation. Indeed, we restrict here to $\bLambda\:\bL^r(\Omega) \rightrightarrows\bL^s(\Omega)$, $r\geq2$, being a dissipative operator, i.e., an operator satisfying Assumption~\ref{assu:diss-drag}. Also, as done in Sections~\ref{sec:math-fram} and~\ref{sec:well-posedn-diss}, we fix $q\in L^r(\Omega)$, $\bff\in \bL^s(\Omega)$, $u_0\in L^{r}(\Sigma_{\mt{v}})$ and $p_0\in W^{\frac{1}{r},s}(\Sigma_{\mt{p}})$.

Furthermore, we let $\{\Reg_\e\}_{\e>0}$ be a mollifying sequence; see Appendix~\ref{sec:conv-moll} for the classical concepts used in this section on convolutions and mollifiers.

\subsection{Regularization}
\label{sec:regularization}

Write $\Psi_0\:\R\to\R$ the following continuous extension of $\Psi$:
\begin{equation*}
  \Psi_0(a) =
  \begin{cases}
    \Psi(0) & \text{for all $a<0$},\\
    \Psi(a) & \text{for all $a\geq0$};
  \end{cases}
\end{equation*}
then, we may define the \emph{regularization} $\{\Psi_\e\}_{\e>0}$ of $\Psi$ by mollification of $\Psi_0$ according to $\Psi_\e\:\R_+\to\R$ and
\begin{equation}
  \label{eq:regularization}
  \Psi_\e(a) = \Reg_\e*\Psi_0(a) \quad \text{for all $a\geq0$ and $\e>0$}.
\end{equation}

\subsubsection{Regularized dissipation and drag operator}
\label{sec:regul-diss}

For all $\e>0$, we define the \emph{regularized} dissipation $\D_\e\:\bL^r(\Omega) \to \R$ by
\begin{equation}
  \label{eq:diss-reg}
  \D_\e(\bu) = \frac12 \int_\Omega \Psi_\e(\norm{\bu}_{\mathbb{D}_i}^2) \quad \text{for all $\bu\in\bL^r(\Omega)$},
\end{equation}
as well as the \emph{regularized} drag operator $\blambda_\e\:\bL^r(\Omega)\to\bL^s(\Omega)$ by
\begin{equation}
  \label{eq:reg-drag}
  \blambda_\e(\bu) = \phi_\e(\norm{\bu}_{\mathbb{D}}^2)\,\mathbb{D}\bu \quad \text{for all $\bu\in\bL^r(\Omega)$},
\end{equation}
where $\phi_\e:=\Psi_\e'=\Reg_\e'*\Psi_0$. We have the following analogues of Lemmas~\ref{lem:psi-diss} and~\ref{lem:drag-subdiff}:
\begin{lem}
  \label{lem:psi-diss-reg}
  Let $\e>0$. Then, $\Psi_\e$ is nondecreasing, $(r/2)$-bounded and $(r/2)$-coercive. Additionally, the regularized dissipation $\D_\e$, as given in \eqref{eq:diss-reg}, is well defined and there are $c_{0,\e}, c_{1,\e}, C_\e>0$ so that
  \begin{equation}
    \label{eq:bounds-D-reg}
    c_{1,\e}\norm{\bu}_r^r - c_{0,\e} \leq \D_\e(\bu) \leq C_\e(1+\norm{\bu}_r^r) \quad \text{for all $\bu\in\bL^r(\Omega)$}.
  \end{equation}
The families $\{c_{0,\e}\}_{\e>0}$, $\{c_{1,\e}\}_{\e>0}$ and $\{C_\e\}_{\e>0}$ are bounded. 
\end{lem}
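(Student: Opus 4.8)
The plan is to follow the blueprint of the proof of Lemma~\ref{lem:psi-diss}: I would first establish pointwise estimates on $\Psi_\e$ that are \emph{uniform} in $\e$, and then integrate them against $\norm{\bu}_{\mathbb{D}}^2$ exactly as in the unregularized case. Monotonicity is immediate: since $\Phi(b)\subset\R_+$ for all $b\geq0$, the extension $\Psi_0$ has a.e.\ derivative $\Psi_0'\geq0$, whence $\phi_\e=\Reg_\e'*\Psi_0=\Reg_\e*\Psi_0'\geq0$ because $\Reg_\e\geq0$; thus $\Psi_\e'=\phi_\e\geq0$ and $\Psi_\e$ is nondecreasing. (Equivalently, convolving the nondecreasing function $\Psi_0$ with the nonnegative kernel $\Reg_\e$ preserves monotonicity.)

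For the growth bounds I would first observe that the pointwise estimates \eqref{eq:psi-bdd} and \eqref{eq:psi-coerc} proved for $\Psi$ extend to $\Psi_0$ on all of $\R$: writing $a_+=\max\{a,0\}$, one has $\Psi(0)+\tfrac{2c}{r}a_+^{r/2}\leq\Psi_0(a)$ and $\abs{\Psi_0(a)}\leq(\abs{\Psi(0)}+2C)(1+a_+^{r/2})$ for every $a\in\R$, the case $a<0$ being trivial since $\Psi_0$ is then constant. Taking the mollifier support in $[-\e,\e]$ and using that $\Reg_\e\,\der t$ is a probability measure, I would convolve these inequalities and bound the shifted powers by $(a-t)_+\leq a+\e$ for the upper estimate and, after splitting into $a\geq2\e$ and $0\leq a<2\e$, by $(a-t)_+\geq a/2$ for the lower one. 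Combined with the elementary bound $(a+\e)^{r/2}\leq 2^{r/2-1}(a^{r/2}+\e^{r/2})$, this yields a coercivity coefficient and a boundedness constant that are independent of $\e$, together with additive remainders controlled by $\e^{r/2}$; hence $\Psi_\e$ is $(r/2)$-coercive and $(r/2)$-bounded with $\e$-uniform constants as $\e\to0$.

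The bounds \eqref{eq:bounds-D-reg} then follow verbatim from the argument of Lemma~\ref{lem:psi-diss}: the upper estimate on $\Psi_\e$ shows the integrand in \eqref{eq:diss-reg} is dominated by $C'(1+(\sup\mathbb{D})^{r/2}\norm{\bu}^r)\in L^1(\Omega)$, so $\D_\e$ is well defined, while feeding $\inf\mathbb{D}>0$ and $\sup\mathbb{D}<\infty$ into the coercivity and boundedness of $\Psi_\e$ produces $c_{1,\e}\norm{\bu}_r^r-c_{0,\e}\leq\D_\e(\bu)\leq C_\e(1+\norm{\bu}_r^r)$. Since the constants for $\Psi_\e$ were obtained uniformly in $\e$, the families $\{c_{0,\e}\}_{\e>0}$, $\{c_{1,\e}\}_{\e>0}$ and $\{C_\e\}_{\e>0}$ are bounded. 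The only genuine obstacle is precisely this $\e$-uniformity: one must control how the mollification shift by $t\in\supp\Reg_\e$ distorts the $(r/2)$-growth, and in particular handle the region $a<2\e$ in the lower bound, where the shifted argument may vanish and the naive estimate $(a-t)_+\geq a/2$ fails; away from this issue, everything is a routine transcription of the unregularized proof.
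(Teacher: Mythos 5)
Your overall strategy coincides with the paper's: transfer the $(r/2)$-growth bounds from $\Psi$ (extended to $\Psi_0$) through the convolution, keeping track that the constants stay bounded as $\e\to0^+$, and then integrate exactly as in Lemma~\ref{lem:psi-diss}. The monotonicity argument and the final passage from pointwise bounds on $\Psi_\e$ to \eqref{eq:bounds-D-reg} are fine. However, two specific points need repair. First, your pointwise estimates $(a-t)_+\leq a+\e$ and the case split at $a=2\e$ presuppose $\supp\Reg_\e\subset[-\e,\e]$, which the paper's framework does not grant: a mollifying sequence here is built from a Schwartz mollifier (Appendix~\ref{sec:conv-moll}), which need not be compactly supported, and the numerics in Section~\ref{sec:numerical_results} use a Gaussian. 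The fix is routine but changes the estimates: replace $\e$ by $\abs{t}$ and control the tail through the moment $M_\e=\int_{-\infty}^\infty\Reg_\e(t)\abs{t}^{r/2}\d t=O(\e^{r/2})$, which is exactly how the paper obtains a boundedness constant of the form $K(1+2^{1-r/2}M_\e)$ that remains bounded as $\e\to0^+$.

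Second, for the lower bound you explicitly flag the region $0\leq a<2\e$ as ``the only genuine obstacle'' but never discharge it, so as written the coercivity claim is incomplete precisely where you say the difficulty lies. It can be closed cheaply: since $\Psi_0\geq\Psi(0)$ everywhere (as $\Psi_0$ is nondecreasing and constant on $(-\infty,0)$), one has $\Psi_\e\geq\Psi(0)$, so on $a<2\e$ the deficit $c'a^{r/2}\leq c'(2\e)^{r/2}$ is absorbed into the additive constant, which stays bounded. Note that the paper's argument avoids any case-splitting altogether: it bounds $\int_0^\infty\Reg_\e(a-b)\,b^{r/2}\d b\geq a^{r/2}\int_a^\infty\Reg_\e(a-b)\d b=a^{r/2}\int_{-\infty}^0\Reg_\e$, and the last integral equals $1/2$ by evenness, giving a coercivity constant uniform in $\e$ in one line. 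Adopting that estimate would both remove the compact-support assumption and eliminate your problematic small-$a$ region.
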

\begin{proof}
  By Lemma~\ref{lem:psi-diss}, we get that $\Psi_0$ is nondecreasing and so, by the nonnegativity of $\Reg_\e$, the nondecreasing monotonicity of $\Psi_\e$ follows. Then, for all $a\geq0$, note that
  \begin{equation*}
    \Psi_\e(a) = \int_{-\infty}^\infty \Reg_\e(a-b)\Psi_0(b) \d b = M\Psi(0) + \int_0^\infty \Reg_\e(a-b) \Psi(b) \d b,
  \end{equation*}
where $M:=\int_{-\infty}^0 \Reg_\e$ is in fact independent of $\e$. Using that $\Psi$ is $(r/2)$-bounded from Lemma~\ref{lem:psi-diss} and writing $K$ its boundedness constant, for all $a\geq0$, we get
\begin{align*}
  \abs{\Psi_\e(a)}  &\leq M\abs{\Psi(0)} + K\int_0^\infty \Reg_\e (a-b) \left(1+b^{\frac{r}{2}}\right) \d b\\
                    &\leq M\abs{\Psi(0)} + K\left(1 + \int_{-\infty}^a \Reg_\e (b) (a+\abs{b})^{\frac{r}{2}} \d b\right)\\
  &\leq M\abs{\Psi(0)} + K\left(1 + 2^{1-\frac{r}{2}} \int_{-\infty}^\infty \Reg_\e (b) \left( a^{\frac{r}{2}} + \abs{b}^{\frac{r}{2}} \right) \d b \right)\\
  &\leq M\abs{\Psi(0)} + K\left( 1 + 2^{1-\frac{r}{2}} M_\e + 2^{1-\frac{r}{2}} a^{\frac{r}{2}} \right)\\
  &\leq \left(M\abs{\Psi(0)} + K\left(1 + 2^{1-\frac{r}{2}}M_\e\right)\right) \left(1+a^{\frac{r}{2}}\right),
\end{align*}
where $M_\e:=\int_{-\infty}^\infty  \Reg_\e (b)\abs{b}^{\frac{r}{2}}$ is such that $M_\e\to0$ as $\e\to 0^+$. Thus, $\Psi$ is $(r/2)$-bounded. Furthermore, we also know from Lemma~\ref{lem:psi-diss} that $\Psi$ is $(r/2)$-coercive; writing $k$ its coercivity constant, for all $a\geq0$, we yield
\begin{align*}
  \Psi_\e(a) &\geq M\Psi(0) + k\int_0^\infty \Reg_\e(a-b) b^{\frac{r}{2}} \d b\\
  &\geq M\Psi(0) + k \left( \int_0^a \Reg_\e(a-b) b^{\frac{r}{2}} \d b + \left( \int_a^\infty \Reg_\e(a-b) \d b \right)a^{\frac{r}{2}}  \right)\\
  &\geq M\Psi(0) + k M a^{\frac{r}{2}},
\end{align*}
which shows $\Psi$ is $(r/2)$-coercive.

The above inequalities, as well as the nondecreasing monotoncity of $\Psi_\e$ directly give the fact that $\D_\e$ is well defined and that \eqref{eq:bounds-D-reg} holds.
\end{proof}
\begin{lem}
  \label{lem:drag-grad}
  Let $\e>0$. Then, $\D_\e$ is locally Lipschitz continuous and differentiable with $\blambda_\e=\bgrad\D_\e$. If $\Psi$ is convex, then $\Psi_\e$ and $\D_\e$ are convex; if furthermore $\Psi$ is strictly increasing, then $\Psi_\e$ is strictly increasing and $\D_\e$ is strictly convex.
\end{lem}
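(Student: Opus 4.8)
The plan is to follow the proofs of Lemmas~\ref{lem:psi-diss-reg} and~\ref{lem:drag-subdiff}, the essential simplification being that the mollification renders $\Psi_\e$ smooth. I set $\eta_\e(\ba) = \frac12\Psi_\e(\norm{\ba}_{\mathbb{D}}^2)$ for $\ba\in\R^d$, so that $\D_\e(\bu) = \int_\Omega\eta_\e(\bu)$, exactly as in Lemma~\ref{lem:drag-subdiff}. The local Lipschitz continuity of $\D_\e$ then follows verbatim from that lemma, invoking \cite[Proposition~12]{GP17} together with the right-hand inequality in \eqref{eq:bounds-D-reg}.

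For the differentiability, I would first note that $\Psi_\e = \Reg_\e*\Psi_0$ is $C^\infty$ with $\Psi_\e'=\phi_\e$, so $\eta_\e$ is continuously differentiable with $\bgrad_{\ba}\eta_\e(\ba) = \phi_\e(\norm{\ba}_{\mathbb{D}}^2)\,\mathbb{D}\ba$, which is precisely the pointwise value of the regularized drag in \eqref{eq:reg-drag}. Writing $\phi_\e = \Reg_\e*\Psi_0'$, where $\Psi_0'\geq0$ is the a.e.\ derivative of $\Psi_0$ and satisfies the upper bound in \eqref{eq:cont-upper-bound}, one sees that $\phi_\e$ inherits this bound, so repeating the preliminary check of Section~\ref{sec:preliminary-check} gives $\blambda_\e(\bu)\in\bL^s(\Omega)$ for $\bu\in\bL^r(\Omega)$. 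Because $\eta_\e$ is $C^1$, its subdifferential is the singleton $\{\bgrad_{\ba}\eta_\e(\ba)\}$, so the identification \eqref{eq:subdiff-1} combined with \cite[Section~3]{Giner98} yields $\bp\D_\e(\bu)\subset\{\blambda_\e(\bu)\}$; since the Clarke subdifferential of the locally Lipschitz functional $\D_\e$ is nonempty, equality holds. A locally Lipschitz functional whose subdifferential reduces to a singleton at every point is differentiable there with gradient equal to that subgradient, whence $\bgrad\D_\e = \blambda_\e$.

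Turning to convexity, suppose $\Psi$ is convex. As $\Psi$ is nondecreasing (Lemma~\ref{lem:psi-diss}), its right derivative at $0$ is nonnegative, so the extension $\Psi_0$---constant on $(-\infty,0)$ and equal to the convex $\Psi$ on $[0,\infty)$ with no concave kink at the junction---is convex on $\R$. The identity $\Psi_\e(a) = \int\Reg_\e(t)\,\Psi_0(a-t)\d t$ exhibits $\Psi_\e$ as an average of translates of $\Psi_0$ against the nonnegative kernel $\Reg_\e$, hence $\Psi_\e$ is convex. Being also nondecreasing, $\Psi_\e$ precomposed with the convex quadratic $\ba\mapsto\norm{\ba}_{\mathbb{D}}^2$ yields a convex $\eta_\e$, so $\D_\e$ is convex. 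If moreover $\Psi$ is strictly increasing, then for $a_1<a_2$ the increment $\Psi_0(a_2-t)-\Psi_0(a_1-t)$ is nonnegative and strictly positive on a positive-measure set of $t$ where $\Reg_\e>0$ (those with $a_2-t>0$), giving $\Psi_\e(a_2)>\Psi_\e(a_1)$; thus $\Psi_\e$ is strictly increasing. Since $\inf\mathbb{D}>0$, the quadratic $\ba\mapsto\norm{\ba}_{\mathbb{D}}^2$ is strictly convex, and composing it with the convex, strictly increasing $\Psi_\e$ makes $\eta_\e$ strictly convex, which integrates to strict convexity of $\D_\e$ on $\bL^r(\Omega)$.

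The main obstacle I anticipate is the differentiability step: upgrading from local Lipschitz continuity to an honest gradient. The argument hinges on the smoothness of $\Psi_\e$ forcing the subdifferential of $\D_\e$ to reduce to a singleton (equivalently, on differentiation under the integral sign, using the growth bound on $\phi_\e$ that keeps $\blambda_\e$ valued in $\bL^s(\Omega)$); everything else is a routine transcription of the unregularized lemmas together with the standard fact that mollification against a nonnegative kernel preserves convexity and monotonicity.
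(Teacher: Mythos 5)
Your proposal is correct and follows essentially the same route as the paper: the same integrand $\eta_\e$, the same citation of \cite[Proposition~12]{GP17} for local Lipschitz continuity, the same singleton-subdifferential argument via \cite[Section~3]{Giner98} and nonemptiness of the Clarke subdifferential for differentiability, and the same convexity chain ($\Psi_0$ convex, mollification preserves convexity, composition with the nondecreasing weighted square). The only difference is that you spell out the strict-monotonicity and strict-convexity details that the paper dispatches with ``the analogous argument holds,'' which is a harmless elaboration.
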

\begin{proof}
  Define $\eta_\e\:\R^d\to \R$ by
  \begin{equation*}
    \eta_\e(\ba) = \frac12 \Psi_\e(\norm{\ba}_{\mathbb{D}}^2) \quad \text{for all $\ba\in\R^d$},
  \end{equation*}
  so that
  \begin{equation*}
    \D_\e(\bu) = \int_\Omega \eta_\e(\bu) \quad \text{for all $\bu\in\bL^r(\Omega)$}.
  \end{equation*}

  The local Lipschitz continuity of $\D_\e$ stems from~\cite[Proposition~12]{GP17} and the right-hand inequality in \eqref{eq:bounds-D-reg}. Furthermore, let $\bu\in\bL^r(\Omega)$ and note that
  \begin{equation*}
    \blambda_\e(\bu)(\bx) = \bgrad\eta_\e(\bu(\bx)) \quad \text{for all $\bx\in \Omega$};
  \end{equation*}
  by~\cite[Section~3]{Giner98}, we get from this that $\bp \D_\e(\bu) \subset \{\blambda_\e(\bu)\}$. Since we know $\bp\D_\e(\bu)$ is nonempty (cf. Proposition~\ref{prop:clarke}), we get that $\bp \D_\e(\bu)$ is the singleton $\{\blambda_\e(\bu)\}$, so that $\D_\e$ is differentiable with $\bgrad \D_\e = \blambda_\e$.

  Suppose that $\Psi$ is convex. Then, $\Psi_0$ is convex since $\Psi$ is convex and nondecreasing (cf. Lemma~\ref{lem:psi-diss}). Thus, by Proposition~\ref{prop:cv-moll}, $\Psi_\e$ is convex. Since $\Psi_\e$ is also nondecreasing (cf. Lemma~\ref{lem:psi-diss-reg}), we get that $\eta_\e$ is convex and the convexity of $\D_\e$ follows; the analogous argument holds to get the strict monotonicity of $\Psi_\e$ and the strict convexity of $\D_\e$ in case $\Psi$ is strictly increasing.
\end{proof}

\subsubsection{Regularized problem and results}
\label{sec:regul-probl-results}

Here follows the resulting regularized, monovalued problem:
\begin{namedthm}{Problem~5.3($\e$)}[regularized]
  \namedlabel{pb:reg}{\thesection.3}
  Find $(\bu_\e,p_\e) \in \bL^r(\Omega)\times W_0^{1,s}(\Omega)$ so that
  \begin{equation*}
    \begin{gathered}
      \ap{\blambda_\e(\bu_\e),\bvarphi} = \ap{-\bgrad p_\e + \bff_0,\bvarphi} \qquad \forall \bvarphi\in \bL^r(\Omega),\\
      \ap{\bgrad \psi, \bu_\e} = -\int_\Omega q\psi + \int_{\Sigma_{\mt{v}}} u_0\psi
      \qquad \forall \psi\in W_0^{1,s}(\Omega).
    \end{gathered}
  \end{equation*}
\end{namedthm}

The question is now to determine whether Problem~\ref{pb:reg} admits a solution for each $\e>0$ and, if so, whether a sequence $(\bu_\e,p_\e)_{\e>0}$ of solutions to Problem~\ref{pb:reg} converges weakly to a solution to Problem~\ref{pb:weak-form-multiv}. We state our results in this regard below and then provide the proofs.

\begin{cor}[well-posedness---regularized problem]
\label{cor:wp-reg}
  Let $\e>0$.
  \begin{enumerate}[label=(\roman*)]
    \item\label{it:cvx} If $\Psi$ is convex, then Problem~\ref{pb:reg}($\e$) has a solution. If additionally $\Psi$ is strictly increasing, then Problem~\ref{pb:reg}($\e$) has a unique solution.
    \item\label{it:non-cvx} If $d=1$, then Problem~\ref{pb:reg}($\e$) has a solution.
  \end{enumerate}
\end{cor}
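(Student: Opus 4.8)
The plan is to recognize the regularized drag operator $\blambda_\e$ as itself a dissipative operator in the sense of Assumption~\ref{assu:diss-drag}, and then to invoke Corollaries~\ref{cor:scal-drag-coeff} and~\ref{cor:noncvx-drag} directly. Concretely, $\blambda_\e(\bu) = \phi_\e(\norm{\bu}_{\mathbb{D}}^2)\mathbb{D}\bu$ with the same tensor $\mathbb{D}$, the \emph{monovalued} factor $\Phi_\e := \{\phi_\e\}$ playing the role of $\Phi$ and $\Psi_\e$ playing the role of $\Psi$. Since $\Psi_\e = \Reg_\e*\Psi_0$ is smooth, $\phi_\e = \Psi_\e'$ is continuous, so $\Phi_\e$ is set-continuous with nonempty values; moreover $\phi_\e\geq 0$ because $\Psi_\e$ is nondecreasing by Lemma~\ref{lem:psi-diss-reg}, and $\Psi_\e$ is locally Lipschitz with $\p\Psi_\e = \Phi_\e$ by differentiability. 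The one genuinely new ingredient is the pointwise growth bound \eqref{eq:cont-upper-bound} \emph{for $\phi_\e$}, which Lemma~\ref{lem:psi-diss-reg} does not supply as it controls only $\Psi_\e$ itself.

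To obtain this bound I would write $\phi_\e = \Reg_\e*\phi_0$, where $\phi_0$ is the measurable selection of $\p\Psi_0$ coinciding with a selection of $\Phi$ on $\R_+$ and with $0$ on $(-\infty,0)$; this identity is just differentiation under the convolution. Convolving the two-sided estimate $c\,b^{(r-2)/2}\leq \phi_0(b)\leq C(1+b^{(r-2)/2})$, valid on $\R_+$ by \eqref{eq:cont-upper-bound}, against the nonnegative kernel $\Reg_\e$ then yields $c_\e\,a^{(r-2)/2}\leq \phi_\e(a)\leq C_\e(1+a^{(r-2)/2})$ by the very computation in the proof of Lemma~\ref{lem:psi-diss-reg}, only with the exponent $(r-2)/2$ in place of $r/2$: the lower bound comes from splitting the convolution integral at $b=a$ and using $b^{(r-2)/2}\geq a^{(r-2)/2}$ on $[a,\infty)$, while the upper bound follows from $b\leq a+\abs{a-b}$ together with the smallness of the kernel's moments. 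With this, $\blambda_\e$ satisfies Assumption~\ref{assu:diss-drag}, and since $\blambda_\e$ is single-valued, Problem~\ref{pb:reg} is exactly the weak Problem~\ref{pb:weak-form-multiv} written for $\blambda_\e$.

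For part~\ref{it:cvx}, suppose $\Psi$ is convex. Then Lemma~\ref{lem:drag-grad} gives that $\Psi_\e$ is convex, and since $\Psi_\e$ is also nondecreasing by Lemma~\ref{lem:psi-diss-reg}, the one-dimensional map $\Psi_\e\circ(\cdot)^2$ is convex on $\R_+$, which is precisely the hypothesis of Corollary~\ref{cor:scal-drag-coeff}. Applying that corollary to $\blambda_\e$ produces a solution $(\bu_\e,p_\e)$. If $\Psi$ is moreover strictly increasing, Lemma~\ref{lem:drag-grad} ensures $\Psi_\e$ is strictly increasing and, being a mollification, differentiable, so the uniqueness clause of Corollary~\ref{cor:scal-drag-coeff} gives uniqueness of $(\bu_\e,p_\e)$. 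For part~\ref{it:non-cvx}, the verification of Assumption~\ref{assu:diss-drag} above uses no convexity, so when $d=1$ Corollary~\ref{cor:noncvx-drag} applies directly to $\blambda_\e$ and yields a solution.

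The main obstacle is exactly the growth bound on the \emph{derivative} $\phi_\e$: Lemma~\ref{lem:psi-diss-reg} bounds $\Psi_\e$ but says nothing about $\Psi_\e'$, so one must re-run the convolution estimate at the level of $\phi_\e$. Once that estimate is in place, the rest reduces to checking that $\blambda_\e$ is a legitimate dissipative operator and quoting the two corollaries of Section~\ref{sec:results}; no new analytic machinery is required.
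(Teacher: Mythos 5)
Your proposal is correct and follows essentially the same route as the paper: verify that $\blambda_\e$ satisfies Assumption~\ref{assu:diss-drag}, with the key step being the growth bound \eqref{eq:cont-upper-bound} for $\phi_\e$ obtained by rewriting $\phi_\e=\Reg_\e'*\Psi_0=\Reg_\e*\phi$ for a selection $\phi$ of $\Phi$ and re-running the convolution estimates of Lemma~\ref{lem:psi-diss-reg} with exponent $(r-2)/2$, and then quote Corollaries~\ref{cor:scal-drag-coeff} and~\ref{cor:noncvx-drag}. Your treatment of the convexity of $\Psi_\e\circ(\cdot)^2$ and of the differentiability needed for uniqueness also matches the paper's argument.
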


\begin{thm}[convergence---regularized problem]
\label{thm:cv-reg}
  Let $\Psi$ be convex and strictly increasing, and let $(\bu_\e,p_\e)_{\e>0}$ be a sequence in $\bL^r(\Omega)\times W_0^{1,s}(\Omega)$ such that $(\bu_\e,p_\e)$ is solution to Problem~\ref{pb:reg}($\e$) for all $\e>0$. Then, there exists $(\bu,p) \in \bL^r(\Omega)\times W_0^{1,s}(\Omega)$ such that, up to subsequences, $\bu_\e \wto \bu$ in $\bL^r(\Omega)$ and $p_\e \wto p$ in $W_0^{1,s}(\Omega)$ as $\e \to 0^+$, and $(\bu,p)$ is solution to Problem~\ref{pb:weak-form-multiv}.
\end{thm}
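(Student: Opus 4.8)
The plan is to combine uniform a priori estimates, weak compactness, and the convexity of the dissipation to pass to the limit, thereby realising on the level of solutions the $\Gamma$-convergence $\D_\e\to\D$ announced in the abstract. Since $\Psi$ is convex and strictly increasing, Lemma~\ref{lem:drag-grad} makes each $\D_\e$ strictly convex with $\blambda_\e=\bgrad\D_\e$, so by Lemma~\ref{lem:pb-restr} the solution $(\bu_\e,p_\e)$ is characterised by $\bu_\e=\hat\bu+\bv_\e$, where $\bv_\e\in V$ is the unique minimiser over $V$ of $\F_\e(\bv):=\D_\e(\hat\bu+\bv)-\ap{\bff_0,\bv}$, with $\hat\bu$ as in Lemma~\ref{lem:bdry-V0}. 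Comparing $\F_\e(\bv_\e)\leq\F_\e(\bnull)$ and using the uniform lower bound in \eqref{eq:bounds-D-reg} (whose constants are uniformly bounded, with $c_{1,\e}$ bounded away from $0$) yields a uniform bound on $\norm{\bu_\e}_r$. The uniform boundedness estimate \eqref{eq:lambda-s} then bounds $\norm{\blambda_\e(\bu_\e)}_s$, and since $\bgrad p_\e=\bff_0-\blambda_\e(\bu_\e)$ by the momentum equation, it also bounds $\norm{p_\e}_{W_0^{1,s}(\Omega)}=\norm{\bgrad p_\e}_s$. By reflexivity I extract a subsequence along which $\bu_\e\wto\bu$ in $\bL^r(\Omega)$, $p_\e\wto p$ in $W_0^{1,s}(\Omega)$, and $\blambda_\e(\bu_\e)\wto\bs{\ell}$ in $\bL^s(\Omega)$.

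Passing to the limit in the two linear identities of Problem~\ref{pb:reg} is then immediate: the mass-conservation equation survives verbatim for $\bu$, while testing the momentum equation against a fixed $\bvarphi$ and letting $\e\to0^+$ gives $\ap{\bs{\ell},\bvarphi}=\ap{-\bgrad p+\bff_0,\bvarphi}$ for all $\bvarphi\in\bL^r(\Omega)$, i.e. $\bs{\ell}=-\bgrad p+\bff_0$. The entire difficulty is thus reduced to the nonlinear identification $\bs{\ell}\in\bLambda(\bu)$; once this is established, $(\bu,p)$ solves Problem~\ref{pb:weak-form-multiv}, as required.

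I would obtain this identification by a Minty-type argument adapted to the $\e$-dependence, using that $\bLambda=\bp\D$ in the convex case (Lemma~\ref{lem:drag-subdiff}). The crucial device is an \emph{exact} energy identity: since $\bu_\e\in\bL^r(\Omega)$ is itself an admissible test function, testing the momentum equation with $\bvarphi=\bu_\e$, writing $\bu_\e=\hat\bu+\bv_\e$ with $\bv_\e\in V$, and using $\bgrad p_\e\in V^\perp$ (so that $\ap{\bgrad p_\e,\bv_\e}=0$), one gets
\[ \ap{\blambda_\e(\bu_\e),\bu_\e}=-\ap{\bgrad p_\e,\hat\bu}+\ap{\bff_0,\bu_\e}. \]
The right-hand side is linear in the weakly converging quantities, hence passes to the limit and, using $\bgrad p\in V^\perp$ once more, equals exactly $\ap{\bs{\ell},\bu}$; this upgrades a mere limsup bound to the genuine convergence $\ap{\blambda_\e(\bu_\e),\bu_\e}\to\ap{\bs{\ell},\bu}$. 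Separately, for any fixed $\bw\in\bL^r(\Omega)$ convexity of $\D_\e$ gives $\D_\e(\bw)\geq\D_\e(\bu_\e)+\ap{\blambda_\e(\bu_\e),\bw-\bu_\e}$; taking the $\liminf$ and invoking (i) the pointwise convergence $\D_\e(\bw)\to\D(\bw)$, (ii) the weak lower bound $\liminf_\e\D_\e(\bu_\e)\geq\D(\bu)$, and (iii) the energy identity, I reach
\[ \D(\bw)\geq\D(\bu)+\ap{\bs{\ell},\bw-\bu}\qquad\text{for all }\bw\in\bL^r(\Omega), \]
that is, $\bs{\ell}\in\bp\D(\bu)=\bLambda(\bu)$.

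The auxiliary facts (i) and (ii) are handled as follows. For (i), $\Psi_0$ is continuous so $\Psi_\e\to\Psi_0$ locally uniformly, while $\abs{\Psi_\e}\leq C(1+a^{r/2})$ uniformly in $\e$ by Lemma~\ref{lem:psi-diss-reg}; dominated convergence on $\Omega$ (with $\norm{\bu}_{\mathbb{D}}^2\in L^{r/2}(\Omega)$) closes it. For (ii), I again use convexity: $\D_\e(\bu_\e)\geq\D_\e(\bw)+\ap{\blambda_\e(\bw),\bu_\e-\bw}$, where $\blambda_\e(\bw)\to\blambda(\bw)$ \emph{strongly} in $\bL^s(\Omega)$ — since in the convex, set-continuous case $\Phi=\p\Psi$ is single-valued and continuous, the mollified derivatives $\phi_\e$ converge pointwise and the uniform growth bound gives dominated convergence — so the strong–weak pairing passes to the limit, and choosing $\bw=\bu$ yields (ii). I expect the main obstacle to be precisely the control of the nonlinear self-pairing $\ap{\blambda_\e(\bu_\e),\bu_\e}$, and the key to overcoming it is the $V$-orthogonality of $\bgrad p_\e$, which converts that term into linear expressions of the data and of $\hat\bu$; the strict convexity of $\Psi$, which guarantees uniqueness of the minimiser $\bv_\e$ and hence that the given $\bu_\e$ really is $\hat\bu+\bv_\e$, is what makes the variational structure available in the first place.
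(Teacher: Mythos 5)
Your a priori estimates and compactness step coincide with the paper's (Section~\ref{sec:compactness}): both extract the uniform bound on $\norm{\bu_\e}_r$ by comparing the minimizer $\bv_\e$ of the reduced functional with $\bnull$, and then bound $\norm{\bgrad p_\e}_s$ through the momentum equation. Where you genuinely diverge is in the limit passage. The paper characterizes solutions as saddle points of the Lagrangian $\E$ (Lemma~\ref{lem:prel-saddle}), proves $\D_\e\to_\Gamma\D$, and passes to the limit directly in the two saddle-point inequalities for $\E_\e$, so it never needs to identify the weak limit of $\blambda_\e(\bu_\e)$. You instead stay with the PDE formulation, extract a weak $\bL^s(\Omega)$ limit $\bs{\ell}$ of $\blambda_\e(\bu_\e)$, and identify $\bs{\ell}\in\p\D(\bu)=\bLambda(\bu)$ by Minty's trick, the key input being the exact energy identity $\ap{\blambda_\e(\bu_\e),\bu_\e}=-\ap{\bgrad p_\e,\hat\bu}+\ap{\bff_0,\bu_\e}$ coming from the $V$-orthogonality of $\bgrad p_\e$; this correctly upgrades the self-pairing to a convergent quantity. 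Both routes work. Yours is the standard monotone-operator argument and has the merit of exhibiting the limiting drag force explicitly; the paper's is purely variational, avoids any identification of $\bs{\ell}$, and realizes the $\Gamma$-convergence statement advertised in the abstract at the cost of the extra machinery of Lemma~\ref{lem:prel-saddle}.

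One blemish in your step (ii): you justify the strong $\bL^s(\Omega)$ convergence of $\blambda_\e(\bu)$ by asserting that, in the convex, set-continuous case, $\Phi=\p\Psi$ is single-valued. That is false: the nondecreasing jump operators of Section~\ref{sec:jump-case} are convex, set-continuous in the sense of Definition~\ref{defn:lip-op}, and genuinely multivalued at the thresholds. The conclusion nevertheless survives: any selection $\phi$ of $\Phi=\p\Psi$ is nondecreasing, so $\phi_\e=\Reg_\e*\phi$ converges pointwise everywhere to a selection of $\Phi$ (the average of the one-sided limits at the countably many jump points), and the uniform growth bound from Section~\ref{sec:diss-regul} plus dominated convergence give strong convergence of $\blambda_\e(\bu)$ to \emph{some} element of $\bLambda(\bu)$ --- which is all your pairing against $\bu_\e-\bu\wto\bnull$ requires. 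Alternatively, (ii) follows at once from $\Psi_\e\geq\Psi_0$ (Proposition~\ref{prop:cv-moll}, convex case) together with the weak lower semicontinuity of the convex integral functional, which is precisely the paper's $\Gamma$-liminf step in Section~\ref{sec:gamma-conv-regul}.
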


Note that, even when $d=1$ and thus Problem~\ref{pb:reg}($\e$) has a solution for all $\e>0$ regardless of convexity, the above convergence result does not apply in the nonconvex case. As is clear in the proof of Corollary~\ref{cor:wp-reg} below, this stems from the facts that, in this case, we are not able to show that saddle points and solutions coincide or that there is $\Gamma$-convergence of the dissipation. 

Also note that the condition $\Psi$ convex of Corollary~\ref{cor:wp-reg}\ref{it:cvx} and Theorem~\ref{thm:cv-reg} is slightly more restrictive than the condition $\Psi\circ(\cdot)^2$ convex of Corollary~\ref{cor:scal-drag-coeff}. In fact, Corollary~\ref{cor:wp-reg}\ref{it:cvx} does not hold if $\Psi$ corresponds to the sublinear operator in \eqref{eq:law-sublinear}; indeed, in this case, $\Psi\circ(\cdot)^2$ is convex but $\Psi$ is not since $\Psi(a) = 2\sqrt{a} - \ln(1+\sqrt{a})$ for all $a\geq0$. Circumventing this issue should not be difficult by mollifying $\Psi_0\circ(\cdot)^2$ instead of only $\Psi_0$ in the definition of the regularization in \eqref{eq:regularization}, although we do not explore this possibility here.

\subsection{Proof of Corollary~\ref{cor:wp-reg}}
\label{sec:proof-wp-reg}

Let us fix $\e>0$ throughout this section. We want to prove that the regularized drag operator $\blambda_\e$ in \eqref{eq:reg-drag} satisfies Assumption~\ref{assu:diss-drag} in place of $\bLambda$ (i.e., that $\blambda_\e$ is indeed dissipative), and then that we can apply Corollaries~\ref{cor:scal-drag-coeff} and~\ref{cor:noncvx-drag} to show Items~\ref{it:cvx} and~\ref{it:non-cvx}, respectively, in Corollary~\ref{cor:wp-reg}.

\subsubsection{Dissipativity of the regularization}
\label{sec:diss-regul}

We first show that $\blambda_\e$ in \eqref{eq:reg-drag} satisfies Assumption~\ref{assu:diss-drag} in place of $\bLambda$. We only need to prove that $\phi_\e$ satisfies \eqref{eq:cont-upper-bound} instead of $\phi$; indeed, $\phi_\e = \Psi_\e'$ by definition, the nonnegativity of $\phi_\e$ is direct from the nondecreasing monotonicity of $\Psi_\e$ by Lemma~\ref{lem:psi-diss-reg}, and the continuity and Lipschitz continuity of $\phi_\e$ and $\Psi_\e$, respectively, is obvious by smoothness of mollification.

To show \eqref{eq:cont-upper-bound} for $\phi_\e$, recall first that, by Rademacher's theorem, $\Psi_0$ is differentiable almost everywhere since it is locally Lipschitz continuous. Therefore, integrating by parts, for all $a\geq0$, we get 
\begin{equation*}
    \phi_\e(a) = \int_{-\infty}^\infty \gamma_\e'(a-b) \Psi_0(b) \d b = \int_{-\infty}^\infty \gamma_\e(a-b) (\Psi_0)'(b) \d b = \int_0^\infty \gamma_\e(a-b) \phi(b) \d b,
\end{equation*}
where $\phi\:\R_+\to\R_+$ is such that $\phi(b)\in\Phi(b)$ for all $b\geq0$. Using now \eqref{eq:cont-upper-bound} for $\phi$ and following the same steps as in the calculations in the proof of Lemma~\ref{lem:psi-diss-reg}, we get that \eqref{eq:cont-upper-bound} holds also for $\phi_\e$.
 
\subsubsection{Convex case}
\label{sec:convex-case}

Thanks to Section~\ref{sec:diss-regul}, Item~\ref{it:cvx} of Corollary~\ref{cor:wp-reg} is now a straightforward application of Corollary~\ref{cor:scal-drag-coeff}, since indeed, by Lemma~\ref{lem:psi-diss-reg}, the convexity and strictly increasing monotonicity of $\Psi_\e$ follow from the convexity and strictly increasing monotonicity of $\Psi$, respectively.

\subsubsection{Nonconvex case}
\label{sec:nonconvex-case}

Item~\ref{it:non-cvx} is direct because $\blambda_\e$ satisfies Assumption~\ref{assu:diss-drag}, as shown in Section~\ref{sec:diss-regul}; the result is then a trivial application of Corollary~\ref{cor:noncvx-drag}.

\subsection{Proof of Theorem~\ref{thm:cv-reg}}
\label{sec:proof-cv-reg}

Assume that $\Psi$ is convex and strictly increasing and define $\B\:W_0^{1,s}(\Omega)\to \R$ as
\begin{equation*}
  \B(p) = - \int_\Omega q p + \int_{\Sigma_{\mt{v}}} u_0 p \quad \text{for all $p\in W_0^{1,s}(\Omega)$}.
\end{equation*}
Also, let $\hat\bu$ be as in Lemma~\ref{lem:bdry-V0} and recall the definition of $V$ in \eqref{eq:V}. Then, introduce $\D^V\:V\to\R$, $\E\:\bL^r(\Omega)\times W_0^{1,s}(\Omega)\to \R$ and $\D^*\:W_0^{1,s}(\Omega)\to\R$ defined, for all $\bv \in V$, $\bu\in\bL^r(\Omega)$ and $p\in W_0^{1,s}(\Omega)$, by
\begin{gather*}
  \D^V(\bv) = \D(\hat\bu + \bv) - \ap{\bff_0,\hat\bu + \bv},\\
  \E(\bu,p) = \B(p) - \ap{\bgrad p,\bu} - \D(\bu) + \ap{\bff_0,\bu},\\
  \D^*(p) = \max_{\bvarphi\in\bL^r(\Omega)} \E(\bvarphi,p).
\end{gather*}
Similarly, we define the regularized counterparts of these functionals for all $\e>0$, namely, $\D_\e^V\:V\to\R$, $\E_\e\:\bL^r(\Omega)\times W_0^{1,s}(\Omega)\to \R$ and $\D_\e^*\:W_0^{1,s}(\Omega)\to\R$, for all $\bv_\e \in V$, $\bu_\e\in\bL^r(\Omega)$ and $p_\e\in W_0^{1,s}(\Omega)$, by
\begin{gather*}
  \D_\e^V(\bv_\e) = \D_\e(\hat\bu + \bv_\e) - \ap{\bff_0,\hat\bu + \bv_\e},\\
  \E_\e(\bu_\e,p_\e) = \B(p_\e) - \ap{\bgrad p_\e,\bu_\e} - \D_\e(\bu_\e) + \ap{\bff_0,\bu_\e},\\
  \D_\e^*(p_\e) = \max_{\bvarphi\in\bL^r(\Omega)} \E_\e(\bvarphi,p_\e).
\end{gather*}
Note that $\D^*$ and $\D_\e^*$ are indeed well defined since $\D$ and $\D_\e$ are strictly convex by Lemmas~\ref{lem:drag-subdiff} and~\ref{lem:drag-grad}, as well as coercive by Lemmas~\ref{lem:psi-diss} and~\ref{lem:psi-diss-reg}.

\subsubsection{Variational characterization of solutions}
\label{sec:vari-char-solut}

In the lemma and remark below, we provide characterizations of the solutions to Problems~\ref{pb:weak-form-multiv} and~\ref{pb:reg}($\e$) for all $\e>0$ in terms of saddle points and minimizers.

\begin{lem}
  \label{lem:prel-saddle}
  Let $(\bu,p)\in\bL^r(\Omega)\times W_0^{1,s}(\Omega)$. The following assertions are equivalent:
  \begin{enumerate}[label=(\roman*)]
    \item\label{it:sol} $(\bu,p)$ is solution to Problem~\ref{pb:weak-form-multiv}.
    \item\label{it:saddle-p} $(\bu,p)$ is a saddle point of $\E$.
    \item\label{it:min} $p\in\argmin\D^*$ and there exists $\bv\in V$ so that $\bu = \hat\bu + \bv$ and $\bv = \argmin \D^V$.
  \end{enumerate}
\end{lem}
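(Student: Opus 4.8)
The plan is to read the three conditions as, respectively, the weak Euler--Lagrange system, the saddle-point property, and primal/dual optimality for the underlying constrained minimization, and to connect them through convex duality. Throughout I use that $\D$ is strictly convex, locally Lipschitz and coercive with $\bLambda=\bp\D$ (Lemmas~\ref{lem:drag-subdiff} and~\ref{lem:psi-diss}), so that $\bvarphi\mapsto\E(\bvarphi,p)$ is strictly concave and $\psi\mapsto\E(\bu,\psi)$ is affine; in particular the maximizer defining $\D^*(p)$ exists and is unique. I first prove \ref{it:sol}$\Leftrightarrow$\ref{it:saddle-p}. By definition $(\bu,p)$ is a saddle point of $\E$ iff $\E(\bvarphi,p)\le\E(\bu,p)\le\E(\bu,\psi)$ for all $\bvarphi\in\bL^r(\Omega)$ and all $\psi\in W_0^{1,s}(\Omega)$. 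The left inequality says that $\bu$ maximizes the strictly concave map $\bvarphi\mapsto-\D(\bvarphi)+\ap{\bff_0-\bgrad p,\bvarphi}$; by the first-order optimality condition and $\bLambda=\bp\D$, this is exactly $-\bgrad p+\bff_0\in\bLambda(\bu)$, the momentum relation of Problem~\ref{pb:weak-form-multiv}. Since $\psi\mapsto\E(\bu,\psi)$ is affine, the right inequality holds for every $\psi$ iff its linear part vanishes, i.e. $\ap{\bgrad\psi,\bu}=\B(\psi)$ for all $\psi$, which is the mass relation. Together these two statements are precisely \ref{it:sol}.

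Next I prove \ref{it:saddle-p}$\Rightarrow$\ref{it:min}. From the vanishing linear part I obtain $\bu=\hat\bu+\bv$ with $\bv\in V$ (Lemma~\ref{lem:bdry-V0}), and a direct computation using $\ap{\bgrad p,\hat\bu}=\B(p)$ and $\ap{\bgrad p,\bv}=0$ gives $\E(\bu,p)=-\D^V(\bv)$, independently of $p$. On the other hand the maximization in $\bu$ yields $\E(\bu,p)=\D^*(p)$, so that $-\D^V(\bv)=\D^*(p)$. I then record the weak-duality inequality $-\D^V(\bv')\le\D^*(p')$, valid for every $\bv'\in V$ and every $p'$, because $-\D^V(\bv')=\E(\hat\bu+\bv',p')\le\max_{\bvarphi}\E(\bvarphi,p')=\D^*(p')$ by the same computation. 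Comparing with the equality just obtained forces $\D^V(\bv)\le\D^V(\bv')$ and $\D^*(p)\le\D^*(p')$ for all $\bv',p'$, i.e. $\bv\in\argmin\D^V$ and $p\in\argmin\D^*$, which is \ref{it:min}.

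The remaining direction \ref{it:min}$\Rightarrow$\ref{it:saddle-p} requires the reverse (strong-duality) equality $-\min_{V}\D^V=\min\D^*$, which I obtain from the existence of at least one saddle point. Indeed, since $\Psi$ convex and nondecreasing makes $\Psi\circ(\cdot)^2$ convex, Corollary~\ref{cor:scal-drag-coeff} provides a solution to Problem~\ref{pb:weak-form-multiv}, hence by \ref{it:sol}$\Rightarrow$\ref{it:saddle-p} a saddle point $(\bu_0,p_0)$; applying the already-proven \ref{it:saddle-p}$\Rightarrow$\ref{it:min} to it gives $\D^*(p_0)=-\D^V(\bv_0)$ with $p_0\in\argmin\D^*$ and $\bv_0\in\argmin\D^V$, which is exactly the strong-duality equality. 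Then, for any $(\bu,p)$ satisfying \ref{it:min}, I get $\D^*(p)=\min\D^*=-\min_{V}\D^V=-\D^V(\bv)$; together with $\bu=\hat\bu+\bv$ and $\bv\in V$, this is precisely the characterization of a saddle point extracted in the \ref{it:sol}$\Leftrightarrow$\ref{it:saddle-p} step, so \ref{it:saddle-p} follows.

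I expect the main obstacle to be the affine (non-strict) dependence of $\E$ on $p$: it rules out a strict-convexity argument in the pressure variable and allows the pressure to be nonunique when $\bLambda$ is multivalued, which is exactly what forces the detour through duality and the appeal to an a priori existence result to close \ref{it:min}$\Rightarrow$\ref{it:saddle-p}. A secondary technical point is the first-order optimality for minimizing $\D^V$ over the subspace $V$, which must be read through the annihilator $V^\perp$ (equivalently, through the restricted problem Problem~\ref{pb:restricted-V} and Lemma~\ref{lem:pb-restr}); here the identity $\bLambda=\bp\D$ and the isomorphism $\bgrad\colon W_0^{1,s}(\Omega)\to V^\perp$ of Lemma~\ref{lem:isomorphism} are what make the subdifferential computation and the recovery of the pressure from $\blambda-\bff_0\in V^\perp$ rigorous.
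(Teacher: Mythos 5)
Your proof is correct, and while it proves the same equivalences with the same objects $\E$, $\D^V$, $\D^*$, the mechanics differ from the paper's in two places worth noting. For \ref{it:saddle-p}$\implies$\ref{it:sol} and for the claim $p\in\argmin\D^*$ in \ref{it:sol}$\implies$\ref{it:min}, the paper invokes an external result on saddle points of convex--concave functionals (L\'eonard, Theorem~4.8), whereas you argue elementarily: the affine dependence of $\psi\mapsto\E(\bu,\psi)$ forces its linear part to vanish (giving the mass relation), and your weak-duality observation $-\D^V(\bv')=\E(\hat\bu+\bv',p')\leq\D^*(p')$, combined with the equality $\E(\bu,p)=-\D^V(\bv)=\D^*(p)$ at a saddle point, delivers both minimality claims at once --- this is cleaner and more self-contained. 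For \ref{it:min}$\implies$\ref{it:saddle-p}, the paper constructs a companion pressure $\tilde p$ from the minimizer $\bv$ via the restricted problem and the gradient isomorphism of Lemma~\ref{lem:isomorphism}, then compares $\D^*(p)$ with $\D^*(\tilde p)$; you instead derive strong duality $\min\D^*=-\min_V\D^V$ from the a priori existence of a solution (Corollary~\ref{cor:scal-drag-coeff}) and verify the saddle inequalities directly. Your route buys a more transparent duality picture, at the cost of importing the full existence theorem (hence Browder's theorem) where the paper only needs the isomorphism $\bgrad\:W_0^{1,s}(\Omega)\to V^\perp$ to recover a pressure from $\blambda-\bff_0\in V^\perp$; both inputs are legitimately available at this point in the paper, so either closing move is fine.
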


\begin{proof}
  Let us prove this result via the (nonminimal) set of implications below.
  
\paragraph{\ref{it:sol}$\implies$\ref{it:saddle-p}}
Suppose that $(\bu,p)$ is solution to Problem~\ref{pb:weak-form-multiv}. Using the second equation in Problem~\ref{pb:weak-form-multiv}, we get
  \begin{align*}
    \E(\bu,\psi) &= \B(\psi) - \ap{\bgrad \psi,\bu} - \D(\bu) + \ap{\bff_0,\bu}\\
    &= \ap{\bff_0,\bu} - \D(\bu) = \E(\bu,p) \quad \text{for all $\psi\in W_0^{1,s}(\Omega)$},
  \end{align*}
and, from the first equation in Problem~\ref{pb:weak-form-multiv}, we know there is $\blambda\in\bLambda(\bu)$ such that
\begin{align*}
    \E(\bvarphi,p) &= \B(p) - \ap{\bgrad p,\bvarphi} - \D(\bvarphi) + \ap{\bff_0,\bvarphi}\\
    &= \B(p) + \ap{\blambda,\bvarphi} - \D(\bvarphi)\leq \E(\bu,p) \quad \text{for all $\bvarphi\in \bL^r(\Omega)$},
  \end{align*}
where the last inequality comes from the fact that $\bvarphi\mapsto \D(\bvarphi)-\ap{\blambda,\bvarphi}$ is strictly convex (by Lemma~\ref{lem:drag-subdiff}) with critical point $\bu$, so that, by Proposition~\ref{prop:prop-critical-min}, $\bu$ is the global minimizer of $\bvarphi\mapsto \D(\bvarphi) - \ap{\blambda,\bvarphi}$. Hence
\begin{equation*}
  \E(\bvarphi,p) \leq \E(\bu,p) \leq \E(\bu,\psi) \quad \text{for all $(\bvarphi,\psi)\in \bL^r(\Omega)\times W_0^{1,s}(\Omega)$},
\end{equation*}
so that $(\bu,p)$ is a saddle point of $\E$.

\paragraph{\ref{it:saddle-p}$\implies$\ref{it:sol}}
Let $(\bu,p)$ be a saddle point of $\E$. Then, from~\cite[Theorem~4.8]{Leonard07}, we get that $(\bu,p)$ is also a critical point of $\E$. Since $\bp\D = \bLambda$ (cf. Lemma~\ref{lem:drag-subdiff}), one can then check that $(\bu,p)$ is solution to Problem~\ref{pb:weak-form-multiv}.

\paragraph{\ref{it:sol}$\implies$\ref{it:min}}
Suppose $(\bu,p)$ is solution to Problem~\ref{pb:weak-form-multiv}. The proof of Lemma~\ref{lem:pb-restr} shows that then $\bu = \hat\bu + \bv$, where $\bv$ is solution to Problem~\ref{pb:restricted-V}. Thus, $\bnull\in \bLambda(\hat\bu+\bv) - \{\bff_0\} = \bp\D^V(\bv)$ (cf. Lemma~\ref{lem:drag-subdiff}), so that $\bv$ is a critical point of $\D^V$. Hence $\bv$ is a global minimizer of $\D^V$ since $\D^V$ is strictly convex (cf. Proposition~\ref{prop:prop-critical-min}). The fact that $p\in\argmin\D^*$ is direct by~\cite[Theorem~4.8]{Leonard07} given that $(\bu,p)$ is a saddle point of $\E$ (cf. \ref{it:sol}$\implies$\ref{it:saddle-p}).

\paragraph{\ref{it:min}$\implies$\ref{it:saddle-p}}
Assume that $p\in\argmin\D^*$ and $\bv\in V$ is such that $\bv = \argmin \D^V$, and $\bu = \hat\bu + \bv$. Then, $\bv$ is a critical point of $\D^V$ and so it solves Problem~\ref{pb:restricted-V}. By the proof of Lemma~\ref{lem:pb-restr}, there exists $\tilde p$ such that $(\bu,\tilde p)$ is solution to Problem~\ref{pb:weak-form-multiv}. Since we already know that \ref{it:sol}$\implies$\ref{it:saddle-p}, there holds
\begin{equation*}
  \E(\bvarphi,\tilde p) \leq \E(\bu,\tilde p) \leq \E(\bu,\psi) \quad \text{for all $\bvarphi\in\bL^r(\Omega)$ and $\psi\in W_0^{1,s}(\Omega)$}.
\end{equation*}
Then, recalling that $p\in\argmin\D^*$, we get
\begin{equation*}
  \E(\bu,p) \leq \max_{\bvarphi\in\bL^r(\Omega)} \E(\bvarphi,p) = \D^*(p) \leq \D^*(\tilde p) = \max_{\bvarphi\in\bL^r(\Omega)} \E(\bvarphi,\tilde p) \leq \E(\bu,\tilde p). 
\end{equation*}
Also,
\begin{equation*}
  \E(\bu,\tilde p) \leq \min_{\psi\in W_0^{1,s}(\Omega)} \E(\bu,\psi) \leq \E(\bu,p). 
\end{equation*}
Combining these last three equations, we yield
\begin{equation*}
  \E(\bvarphi,p) \leq \E(\bu, p) \leq \E(\bu,\psi) \quad \text{for all $\bvarphi\in\bL^r(\Omega)$ and $\psi\in W_0^{1,s}(\Omega)$},
\end{equation*}
so that $(\bu,p)$ is a saddle point of $\E$.
\end{proof}

\begin{rem}
  \label{rem:prel-saddle-eps}
  Since the regularized operator satisfies the same assumptions as the unreguarized one (cf. Sections~\ref{sec:diss-regul}-\ref{sec:convex-case}), Lemma~\ref{lem:prel-saddle} holds for all $\e>0$ replacing Problem~\ref{pb:weak-form-multiv}, $\D^V$, $\E$ and $\D^*$ by, respectively, Problem~\ref{pb:reg}($\e$), $\D_\e^V$, $\E_\e$ and $\D_\e^*$.
\end{rem}

\subsubsection{Compactness}
\label{sec:compactness}

Let $(\bu_\e,p_\e)_{\e>0}\subset\bL^r(\Omega)\times W_0^{1,s}(\Omega)$ be such that $(\bu_\e,p_\e)$ is solution to Problem~\ref{pb:reg}($\e$) for all $\e>0$. We want to show there exists $(\bu,p) \in \bL^r(\Omega)\times W_0^{1,s}(\Omega)$ so that, up to subsequences, $\bu_\e \wto \bu$ and $p_\e \wto p$ as $\e \to 0^+$.

For all $\e>0$, by Lemma~\ref{lem:prel-saddle} and Remark~\ref{rem:prel-saddle-eps}, there exists $\bv_\e\in V$ such that $\bu_\e = \hat\bu + \bv_\e$ and $\bv_\e$ is the unique global minimizer of $\D_\e^V$, so that, in particular, $\D_\e^V(\bv_\e)\leq \D_\e^V(\bnull)$; then, Lemma~\ref{lem:psi-diss-reg} gives
\begin{equation*}
  \D_\e^V(\bnull) \geq \D_\e^V(\bv_\e) = \D_\e(\hat\bu + \bv_\e) + \ap{\bff_0,\bu_\e}\geq \left(c_{1,\e}\norm{\bu_\e}_r^{r-1} - c_{0,\e} - \norm{\bff_0}_s \right) \norm{\bu_\e}_r.
\end{equation*}
Using Fatou's lemma and Proposition~\ref{prop:cv-moll}, we yield
\begin{align*}
  \limsup_{\e\to0^+} \D_\e^V(\bnull) &= \limsup_{\e\to0^+} \D_\e(\hat\bu) - \ap{\bff_0,\hat\bu} \leq \frac12 \int_\Omega \limsup_{\e\to0^+} \Psi_\e(\norm{\hat\bu}_{\mathbb{D}}^2)  - \ap{\bff_0,\hat\bu}\\
  &\leq \frac12 \int_\Omega \Psi(\norm{\hat\bu}_{\mathbb{D}}^2)  - \ap{\bff_0,\hat\bu}.
\end{align*}
Thus, using the last two equations, there exists $K>0$ so that, for all $\e>0$ small enough, we have 
\begin{equation}
  \label{eq:u-eps-bdd}
  K \geq \left(c_{1,\e}\norm{\bu_\e}_r^{r-1} - c_{0,\e} - \norm{\bff_0}_s \right) \norm{\bu_\e}_r.
\end{equation}
Since $r\geq2$, and $\{c_{0,\e}\}_{\e>0}$ and $\{c_{1,\e}\}_{\e>0}$ are bounded, this shows that $\norm{\bu_\e}_r$ is bounded as $\e\to0^+$. Therefore, there exist $\bu\in\bL^r(\Omega)$ and a subsequence of $(\norm{\bu_\e}_r)_{\e>0}$, still denoted $(\norm{\bu_\e}_r)_{\e>0}$,  such that $\bu_\e \wto \bu$ as $\e\to0^+$. 

To show compactness for the pressure, note that, for all $\e>0$, Problem~\ref{pb:reg}($\e$) and Hölder's inequality lead to
\begin{equation}
  \label{eq:estimate-phi}
  \ap{-\bgrad p_\e + \bff_0,\bvarphi} = \ap{\blambda_\e(\bu_\e),\bvarphi} \leq \norm{\blambda_\e(\bu_\e)}_s \norm{\bvarphi}_r \quad \text{for all $\bvarphi\in\bL^r(\Omega)$}.
\end{equation}
Following the same steps yielding \eqref{eq:bounded-lambda} and \eqref{eq:lambda-s} on the regularized problem, there exists a bounded sequence $\{K_\e\}_{\e>0}\subset (0,\infty)$, such that, for all $\e>0$,
\begin{equation*}
  \norm{\blambda_\e(\bu_\e)}_s^s \leq K_\e \left( 1 + \norm{\bu_\e}_r^{r-1} \right),
\end{equation*}
and \eqref{eq:estimate-phi} gives
\begin{equation*}
  \norm{\bgrad p_\e}_s \leq K_\e^{\frac1s} \left( 1 + \norm{\bu_\e}_r^{r-1} \right)^{\frac1s} + \norm{\bff_0}_s.
\end{equation*}
Therefore, since, by \eqref{eq:u-eps-bdd}, $\norm{\bu_\e}_r$ is bounded for all $\e>0$ small enough, so is $\norm{\bgrad p_\e}_s$. By the Rellich--Kondrachov theorem, we can thus extract a subsequence of $(p_\e)_{\e>0}$, still denoted $(p_\e)_{\e>0}$, such that $p_\e\wto p$ as $\e\to0^+$ for some $p\in W_0^{1,s}(\Omega)$.

\subsubsection{$\Gamma$-convergence of the regularized dissipation}
\label{sec:gamma-conv-regul}

Let $(\bu_\e)_{\e>0}$ and $\bu$ be as in Section~\ref{sec:compactness}. We wish to show that $\D_\e \to_{\Gamma} \D$ as $\e\to0^+$. 

Using Proposition~\ref{prop:cv-moll}, for all $\e>0$ and $\bvarphi\in\bL^r(\Omega)$, we get
\begin{equation*}
  \D_\e(\bvarphi) = \frac{1}{2} \int_\Omega \Psi_\e(\norm{\bvarphi}_{\mathbb{D}}^2)\geq \frac{1}{2} \int_\Omega \Psi_0(\norm{\bvarphi}_{\mathbb{D}}^2). 
\end{equation*}
Then, since $\Psi_0$ is convex, we know by Tonelli's theorem of functional analysis that $\bvarphi\mapsto \int_\Omega \Psi(\norm{\bvarphi}_{\mathbb{D}}^2)$ is weakly lower semicontinuous in $\bL^p(\Omega)$, we yield
\begin{equation*}
  \liminf_{\e\to0^+} \D_\e(\bvarphi_\e) \geq \frac{1}{2} \liminf_{\e\to0^+}\int_\Omega \Psi_0(\norm{\bvarphi_\e}_{\mathbb{D}}^2) \geq \frac{1}{2} \int_\Omega \Psi_0(\norm{\bvarphi}_{\mathbb{D}}^2) = \D(\bvarphi), 
\end{equation*}
for any $(\bvarphi_\e)_{\e>0}\subset \bL^r(\Omega)$ and $\bvarphi\in\bL^r(\Omega)$ such that $\bvarphi_\e\wto\bvarphi$, which shows the ``liminf'' condition for $(\D_\e)_{\e>0}$ in the definition of $\Gamma$-convergence (cf. Definition~\ref{defn:gamma-cv}).

For the ``limsup'' condition, let $\bvarphi\in V$ and consider the trivial sequence $(\bvarphi_\e)_{\e>0}$ such that $\bvarphi_\e = \bvarphi$ for all $\e>0$. Then, $(\bvarphi_\e)_{\e>0}$ is a recovery sequence of $\bvarphi$. Indeed, Fatou's lemma and Proposition~\ref{prop:cv-moll} give
\begin{equation*}
  \limsup_{\e\to0^+} \D_\e(\bvarphi_\e) = \limsup_{\e\to0^+} \D_\e(\bvarphi) \leq \frac12 \int_\Omega \limsup_{\e\to0^+} \Psi_\e(\norm{\bvarphi}_{\mathbb{D}}^2)\leq  \frac12 \int_\Omega \Psi(\norm{\bvarphi}_{\mathbb{D}}^2) = \D(\bvarphi),
\end{equation*}
which is the ``limsup'' condition for $(\D_\e)_{\e>0}$. All in all, we have $\D_\e\to_\Gamma \D$ as $\e\to0$.

\subsubsection{Convergence of the flux and pressure}
\label{sec:conv-flux-press}

Let $(p_\e)_{\e>0}$, $(\bu_\e)_{\e>0}$, $p$ and $\bu$ be as in Section~\ref{sec:compactness}. We now show that $(\bu,p)$ is solution to Problem~\ref{pb:weak-form-multiv}.

By Theorem~\ref{lem:prel-saddle} and Remark~\ref{rem:prel-saddle-eps}, we know that $(\bu_\e,p_\e)$ is a saddle point of $\E_\e$ for all $\e>0$:
\begin{equation*}
  \E_\e(\bvarphi,p_\e) \leq \E_\e(\bu_\e,p_\e) \leq \E_\e(\bu_\e,\psi) \quad \text{for all $\bvarphi\in\bL^r(\Omega)$ and $\psi\in W_0^{1,s}(\Omega)$}.
\end{equation*}
Therefore, for all $\bvarphi\in\bL^r(\Omega)$, we compute
\begin{align*}
  \liminf_{\e\to 0^+}\E_\e(\bu_\e,p_\e) &\geq \liminf_{\e\to0^+}\E_\e(\bvarphi,p_\e)\\
  &\geq \liminf_{\e\to0^+} \left( \B(p_\e) - \ap{\bgrad p_\e,\bvarphi} - \D_\e(\bvarphi) + \ap{\bff_0,\bvarphi_\e} \right)\\
  &= \B(p) - \ap{\bgrad p,\bvarphi} + \ap{\bff_0,\bvarphi} - \limsup_{\e\to0^+} \D_\e(\bvarphi)\\
  &\geq \B(p) - \ap{\bgrad p,\bvarphi} + \ap{\bff_0,\bvarphi} - \D(\bvarphi) = \E(\bvarphi,p),
\end{align*}
where the last inequality comes from the ``limsup'' condition in the $\Gamma$-convergence of the regularized dissipation (cf. Section~\ref{sec:gamma-conv-regul}). Similarly, for all $\psi\in W_0^{1,s}(\Omega)$,
\begin{align*}
  \limsup_{\e\to 0^+}\E_\e(\bu_\e,p_\e) &\leq \limsup_{\e\to0^+}\E_\e(\bu_\e,\psi)\\
  &\leq \limsup_{\e\to0^+} \left( \B(\psi) - \ap{\bgrad \psi,\bu_\e} - \D_\e(\bu_\e) + \ap{\bff_0,\bu_\e} \right)\\
  &= \B(\psi) - \ap{\bgrad \psi,\bu} + \ap{\bff_0,\bu} - \liminf_{\e\to0^+} \D_\e(\bu_\e)\\
  &\leq \B(\psi) - \ap{\bgrad \psi,\bu} + \ap{\bff_0,\bu} - \D(\bu) = \E(\bu,\psi),
\end{align*}
where, this time, the last inequality comes from the ``liminf'' condition in the $\Gamma$-convergence of the regularized dissipation. Combining these computations, we get
\begin{equation*}
  \E(\bvarphi,p) \leq \E(\bu,p)\leq \E(\bu,\psi) \quad \text{for all $\bvarphi\in\bL^r(\Omega)$ and $\psi\in W_0^{1,s}(\Omega)$},
\end{equation*}
that is, $(\bu,p)$ is a saddle point of $\E$. By Lemma~\ref{lem:prel-saddle}, this means that $(\bu,p)$ is solution to Problem~\ref{pb:weak-form-multiv}, which ends the proof of Theorem~\ref{thm:cv-reg}.

\begin{rem}[convergence of solutions as minimizers]
  Alternatively, one could show that indeed $(\bu,p)$ is solution to Problem~\ref{pb:weak-form-multiv} by showing that both $(\D_\e^V)_{\e>0}$ and $(\D_\e^*)_{\e>0}$ $\Gamma$-converge to $\D^V$ and $\D^*$ as $\e\to0^+$ along global minimizers, and then by using Proposition~\ref{prop:gamma-cv-min} and Lemma~\ref{lem:prel-saddle}. For $(\D_\e^V)_{\e>0}$, this is direct from Section~\ref{sec:gamma-conv-regul}; for $(\D_\e^*)_{\e>0}$, this is also a consequence of the $\Gamma$-convergence of $(\D_\e)_{\e>0}$ and we leave the details to the reader.
\end{rem}

\section{Numerical approximation}
\label{sec:num-appr}

Let us describe, for all $\e>0$, the numerical approximation of Problem~\eqref{pb:reg}($\e$), which is inherently nonlinear. In fact, even if the law in question is of the the jump type discussed in Section~\ref{sec:jump-case} with $m=0$ (i.e., the law in each speed region is linear), the resulting regularized law $\bLambda_\e$ is nonlinear in $\bu$. Inspired by the standard fixed-point algorithm, we propose the following algorithm to solve Problem~\eqref{pb:reg}($\e$): given a $\bu^0\in\bL^r(\Omega)$, find $\bu^n$ such that
\begin{equation}\label{eq:discr_mollified}
  \begin{cases}
    \dive \bu^n = q,\\
    \sum_{i=0}^m\Phi_{i,\e}(\norm{\bu^{n-1}}_{\mathbb{D}_i}^2)\mathbb{D}_i\bu^n = -\bgrad p^n + \bff,\\
  \end{cases}
  \quad \text{in $\Omega$},
\end{equation}
for all $n\geq1$ such that the following condition is not verified:
\begin{gather*}
    \norm{\bu^{n} - \bu^{n-1}}_r < \tau \norm{\bu^{n-1}}_r,
\end{gather*}
where $\tau>0$ is an arbitrary tolerance. The computation of $\Phi_{i,\e}$ for all $i\in\{0,\dots,m\}$ can be done once at the beginning of the loop and be evaluated at every iteration.

At each iteration in $n$, the problem in \eqref{eq:discr_mollified} is linear and in mixed form, with given inverse permeabilities $\Phi_{i,\e}(\norm{\bu^{n-1}}_{\mathbb{D}_i}^2)$. To numerically discretize it several strategies are possible, we consider here the classical lowest-order Raviart--Thomas approximation~\cite{Raviart1977,Roberts1991} if the computational grid is made of simplices or the lowest-order mixed virtual-element method~\cite{Brezzi2014,BeiraoVeiga2016,Fumagalli2017,Fumagalli2017a,Dassi2020} otherwise. The latter is able to handle cell grids of almost any shape and is suitable for complex problems when $\Omega \subset \mathbb{R}^d$ for $d\geq 2$. Since an accurate description of these numerical schemes is out of the scope of this work, we refer the interested reader to the aforementioned citations for more details.


\section{Numerical results}\label{sec:numerical_results}

In this section, we propose three test cases to validate and show the capabilities of the proposed model and of the variational numerical scheme of Sections~\ref{sec:regul-probl-diss} and~\ref{sec:num-appr}. We focus on drag operators of the jump type discussed in Section~\ref{sec:jump-case} with $\mathbb{F}=\mathbb{I}$, which include in particular the motivating examples discussed in Section~\ref{sec:new-setting}. First, in Section~\ref{subsec:examples:case1}, we compare it against the transition-zone tracking algorithm proposed in~\cite{FP21}. The second example, described in Section~\ref{subsec:example2}, is a problem where three flow regimes may coexist in the domain; we consider both linear and nonlinear laws for each regime. Finally, the last case, reported in Section~\ref{subsec:2dexample}, is a complex two-dimensional problem, where the background permeability field is given by a layer of the SPE10 benchmark.

In all the examples, the mollifying sequence used to regularize the problem is given by the following Gaussian distribution:
\begin{equation*}
  \Reg_\e(a) = \frac{1}{\e \sqrt{2\pi}} \exp\left(-\frac12 \left(\frac{a}{\e}\right)^2\right) \quad \text{for all $a\in\R$}.
\end{equation*}
For the first two cases, the problem in \eqref{eq:discr_mollified} is discretized by the lowest-order Raviart--Thomas method, while the last example is with the lowest-order mixed virtual-element method. All examples were developed with the open source library PorePy~\cite{Keilegavlen2020}; the associated scripts are freely accessible.

\subsection{Comparison with transition-zone tracking}\label{subsec:examples:case1}

In this case, we validate the proposed approach by comparison against the transition-zone tracking algorithm proposed in~\cite{FP21}. Contrary to the present regularized algorithm, which makes the transition zones smooth, the transition-zone tracking algorithm represents the transition zones as sharp interfaces.
For the validation, we retake the problem in~\cite[Section~6.2.1]{FP21}, which considers a linear laws for both regimes, on the one hand, and a linear and nonlinear combination, on the other hand.

\subsubsection{Linear case} \label{subsubsec:examples:case1:linear}

Let the domain be $\Omega = (0, 1)$ and let the scalar (fluid mass) and vector (external body) source terms are set as
\begin{gather}
  \label{eq:qx-f}
    q(x) =
    \begin{dcases*}
        1 & if $x \leq 0.3$,\\
        -1 & if $0.3 < x < 0.7$,\\
        1 & if $x \geq 0.7$,
    \end{dcases*}
    \quad \text{and} \quad
    f(x) = 5\cdot10^{-2}
    .
\end{gather}
The drag operator $\Lambda$ is given by
\begin{gather*}
    \Lambda(u) =
    \begin{dcases*}
        u & in $\Omega_1(u)$,\\
        [0.1,1]\,u & in $\Gamma(u)$,\\
        0.1u & in $\Omega_2(u)$,
    \end{dcases*}
\end{gather*}
where $\Omega_1(u)$, $\Omega_2(u)$ and $\Gamma(u)$ are as in \eqref{eq:regions} with threshold velocity $\bar u = 0.15$.
We set the mollification parameter $\varepsilon = 10^{-4}$ and tolerance $\tau=10^{-6}$ in \eqref{eq:discr_mollified}. The graphs of $\Psi_\e:=\Psi_{0,\e}$ and $\phi_\e:=\phi_{0,\e}$ (here, $m=0$) are given in Figure~\ref{fig:ex1_linear_convolution}.
\begin{figure}[tbp]
    \centering
    \includegraphics[width=1\textwidth]{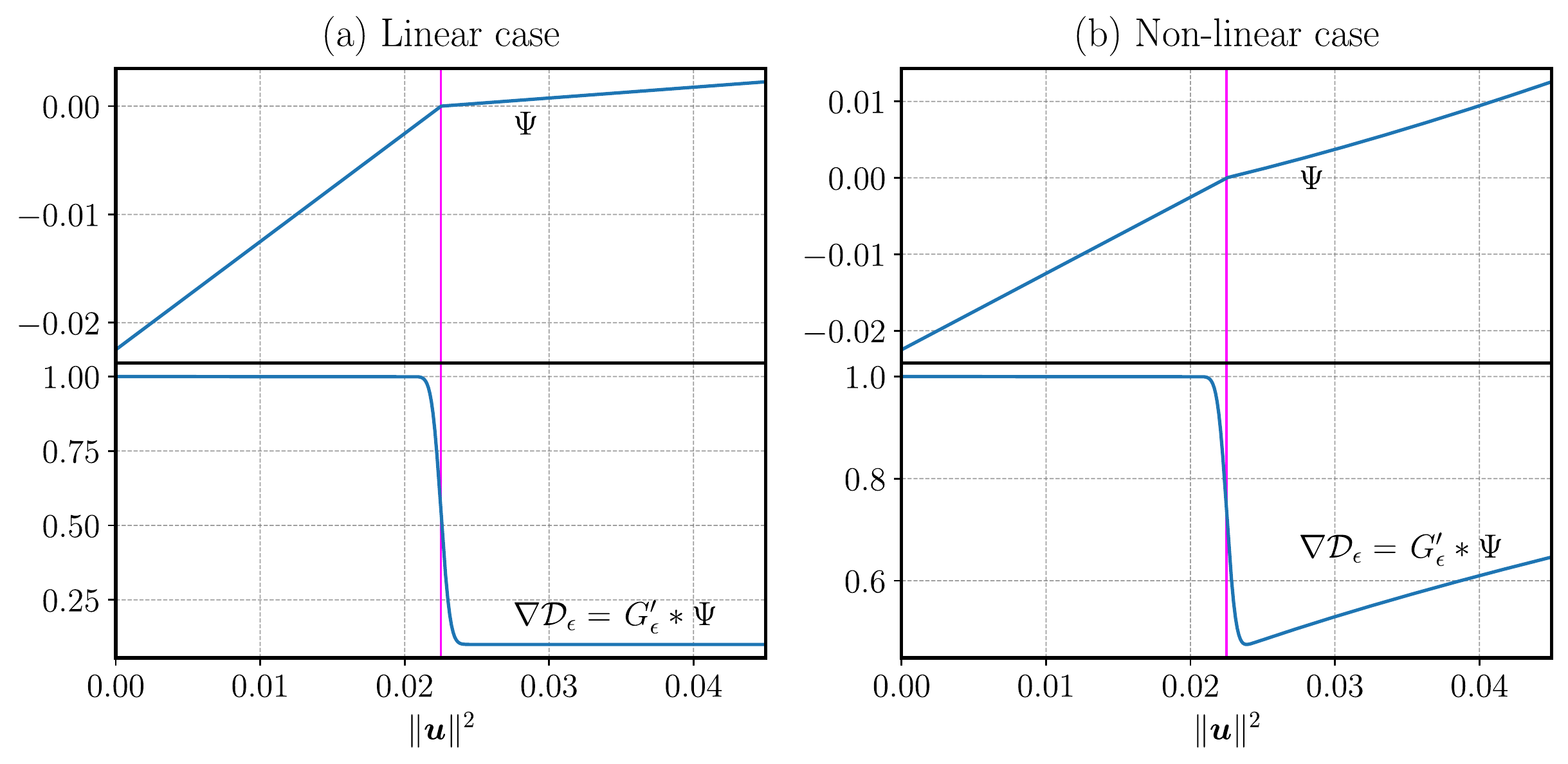}%
    \caption{Regularized-dissipation integrand $\Psi_\e$ (top) and regularized inverse permeability $\phi_\e$ (bottom) as functions of the square velocity for both the linear and nonlinear cases of Section~\ref{subsec:examples:case1}; threshold $\bar u_1^2 = 0.0225$ represented by vertical line}%
    \label{fig:ex1_linear_convolution}
\end{figure}

The algorithm \eqref{eq:discr_mollified} requires two iterations to reach a stable solution, which is reported in Figure~\ref{fig:ex1_linear_solutions}(a). We notice the variation of both pressure and velocity according to the appropriate law chosen by the algorithm.
\begin{figure}[tbp]
    \centering
    \includegraphics[width=1\textwidth]{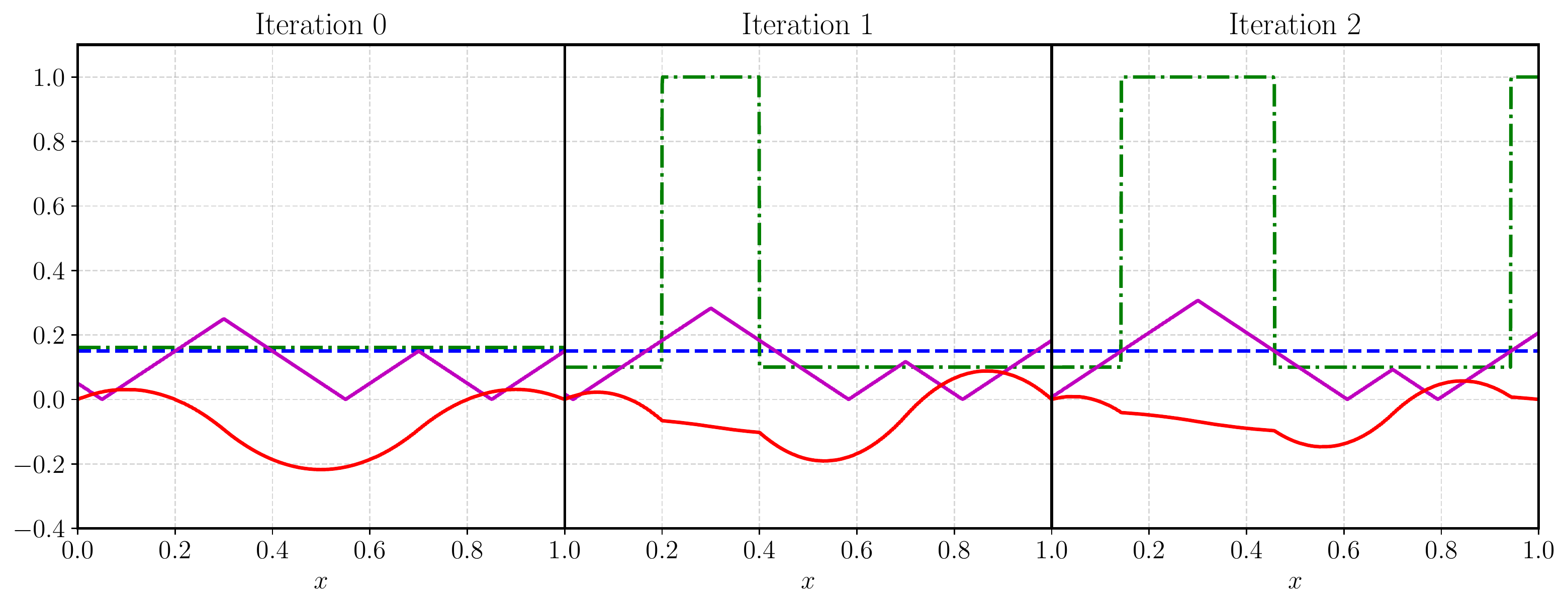}
    \includegraphics[width=0.6\textwidth]{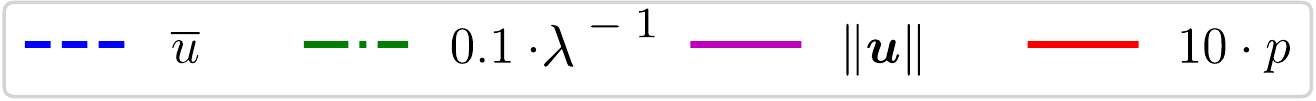}%
    \caption{Solutions for the problem of Section~\ref{subsubsec:examples:case1:linear} at different algorithm iterations}%
    \label{fig:ex1_linear_solutions}
\end{figure}
The errors computed between the proposed algorithm and the one in~\cite{FP21} are presented in Figure~\ref{fig:ex1_linear_error}(a); they are defined as
\begin{gather*}
    err_p = \frac{\norm{p - p_{\mt{ref}}}}{\abs{p_{\mt{ref}}}}
    \quad \text{and} \quad
    err_{\norm{\bu}} = \frac{\norm{u - u_{\mt{ref}}}}{\abs{u_{\mt{ref}}}},
\end{gather*}
where the reference pressure and the reference velocity are the converged solutions of the transition-zone tracking algorithm, since the analytical solution is not known.
We notice that for both pressure and velocity, the magnitude of the error is small.
\begin{figure}[tbp]
    \centering
    \includegraphics[width=0.85\textwidth]{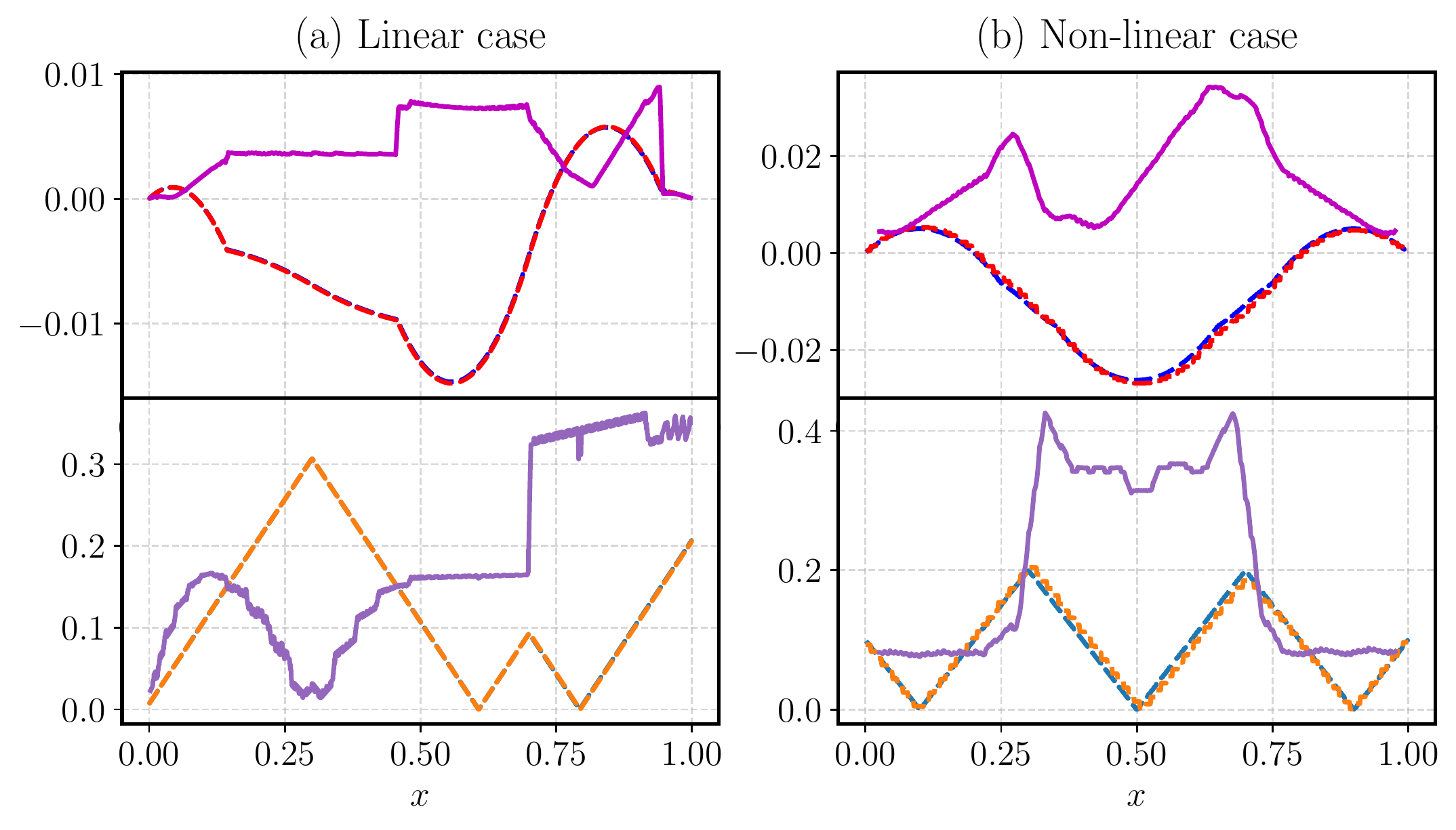}
    \includegraphics[width=0.9\textwidth]{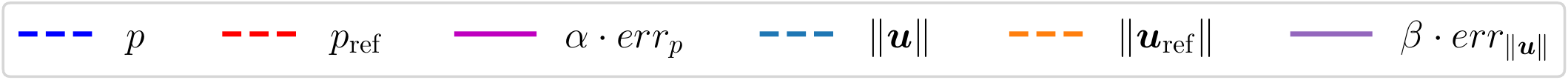}%
    \caption{Pressure (top) and velocity (bottom) errors at convergence between the transition-zone tracking algorithm and the regularized algorithm for both the linear and nonlinear cases of Section~\ref{subsec:examples:case1}; the values of $\alpha$ and $\beta$ are $50$ and $5000$, respectively, for the linear case, and $10$ and $100$, respectively, for the nonlinear case}%
    \label{fig:ex1_linear_error}
\end{figure}

In this test case, given the obtained results, we claim that the two considered algorithms perform equivalently.

\subsubsection{Nonlinear case}\label{subsubsec:examples:case1:nonlinear}

We consider the same data as the case in Section~\ref{subsubsec:examples:case1:linear}, except for the drag operator and the vector source term:
\begin{gather*}
  \Lambda(u) =
  \begin{dcases*}
    u & in $\Omega_1(u)$,\\
    [0.01+3\bar u,1]\, u & in $\Gamma(u)$,\\
    (0.01 + 3\abs{u})u & in $\Omega_2(u)$.
  \end{dcases*}
  \quad \text{and} \quad
  f(x) = 0
  .
\end{gather*}
For the graphs of $\Psi_\e:=\Psi_{0,\e}+\Psi_{1,\e}$ and $\phi_\e:=\phi_{0,\e}+\phi_{1,\e}$ ($m=1$), see Figure~\ref{fig:ex1_linear_convolution}(b).

The solution is reported in Figure~\ref{fig:ex1_nonlinear_solutions}. Comparing with the previous, linear case, we notice the different shape of the inverse permeability. In fact, now there is a nonlinear relation with the velocity, which is not present in the previous case.
\begin{figure}[tbp]
    \centering
    \includegraphics[width=0.66\textwidth]{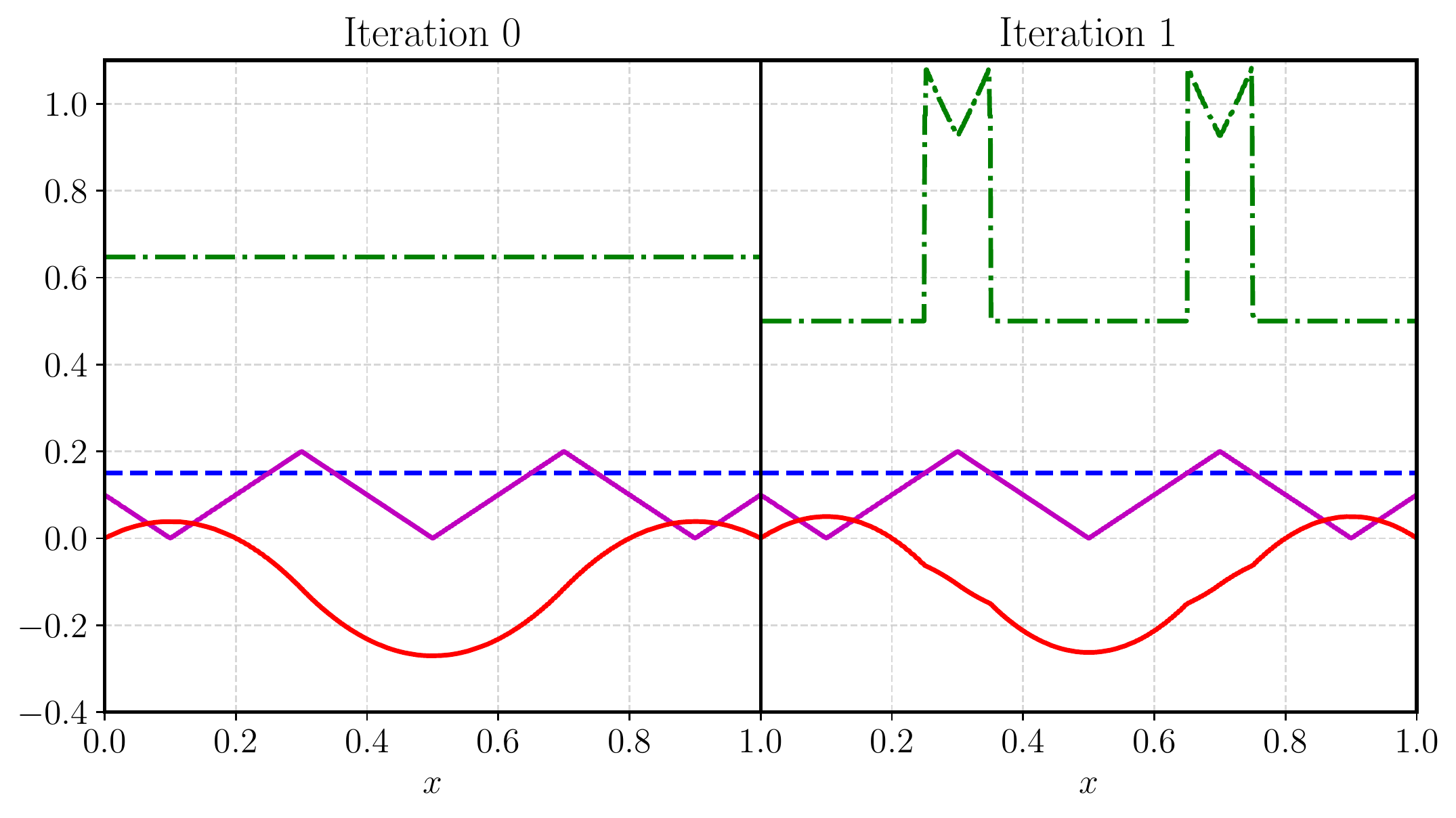}
    \includegraphics[width=0.6\textwidth]{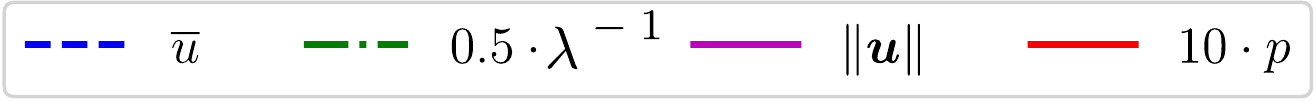}%
    \caption{Solutions for the problem of Section~\ref{subsubsec:examples:case1:nonlinear} at different algorithm iterations}%
    \label{fig:ex1_nonlinear_solutions}
\end{figure}
The algorithm in \eqref{eq:discr_mollified} stops only after two iterations. The errors are reported in Figure~\ref{fig:ex1_linear_error}(b) and, also in this case, are small. We notice that the velocity is slightly shifted with respect to the reference solution, this might be due to a grid effect which disappears for smaller discretization size.

Also in this test case, the proposed algorithm performs similarly to that in~\cite{FP21}.

\subsection{Three transition laws} \label{subsec:example2}

In this part, we validate the proposed procedure in the case of three transition laws. Note that the algorithm in~\cite{FP21}, as it is now, cannot handle this case. The ordered thresholds are given by $\bar u_1 = 0.075$ and $\bar u_2=0.15$ and, as before, we consider both linear and nonlinear transition laws. We consider the domain $\Omega = (0,1)$ and the mollification parameter $\e = 10^{-3}$ and tolerance $\tau=10^{-6}$ in \eqref{eq:discr_mollified}. Mesh size is set to be $10^{-3}$. Also in this case the scalar and vector source terms are given by \eqref{eq:qx-f}.

\subsubsection{Linear case} \label{subsubsec:examples:case2:linear}
We consider first the linear case, where, for all $a\geq0$,
 \begin{gather*}
    \Psi_1(a) = \lambda_{01} a,\\
    \Psi_2(a) = \lambda_{02} a + (\lambda_{01}-\lambda_{02})\bar u_1^2,\\
    \Psi_3(a) = \lambda_{03} a + (\lambda_{02}-\lambda_{03})\bar u_2^2 + (\lambda_{01}-\lambda_{02})\bar u_1^2,
\end{gather*}
where $\lambda_{01} = 1$, $\lambda_{02} = 0.5$, and $\lambda_{03} = 0.25$ and where the integration constants are determined using \eqref{eq:cj}. The graphs of $\Psi_\e:=\Psi_{0,\e}$ and $\phi_\e:=\phi_{0,\e}$ ($m=0$) are given in Figure~\ref{fig:ex2_convolution}(a), where we can identify the three laws.

\begin{figure}[tbp]
    \centering
    \includegraphics[width=1\textwidth]{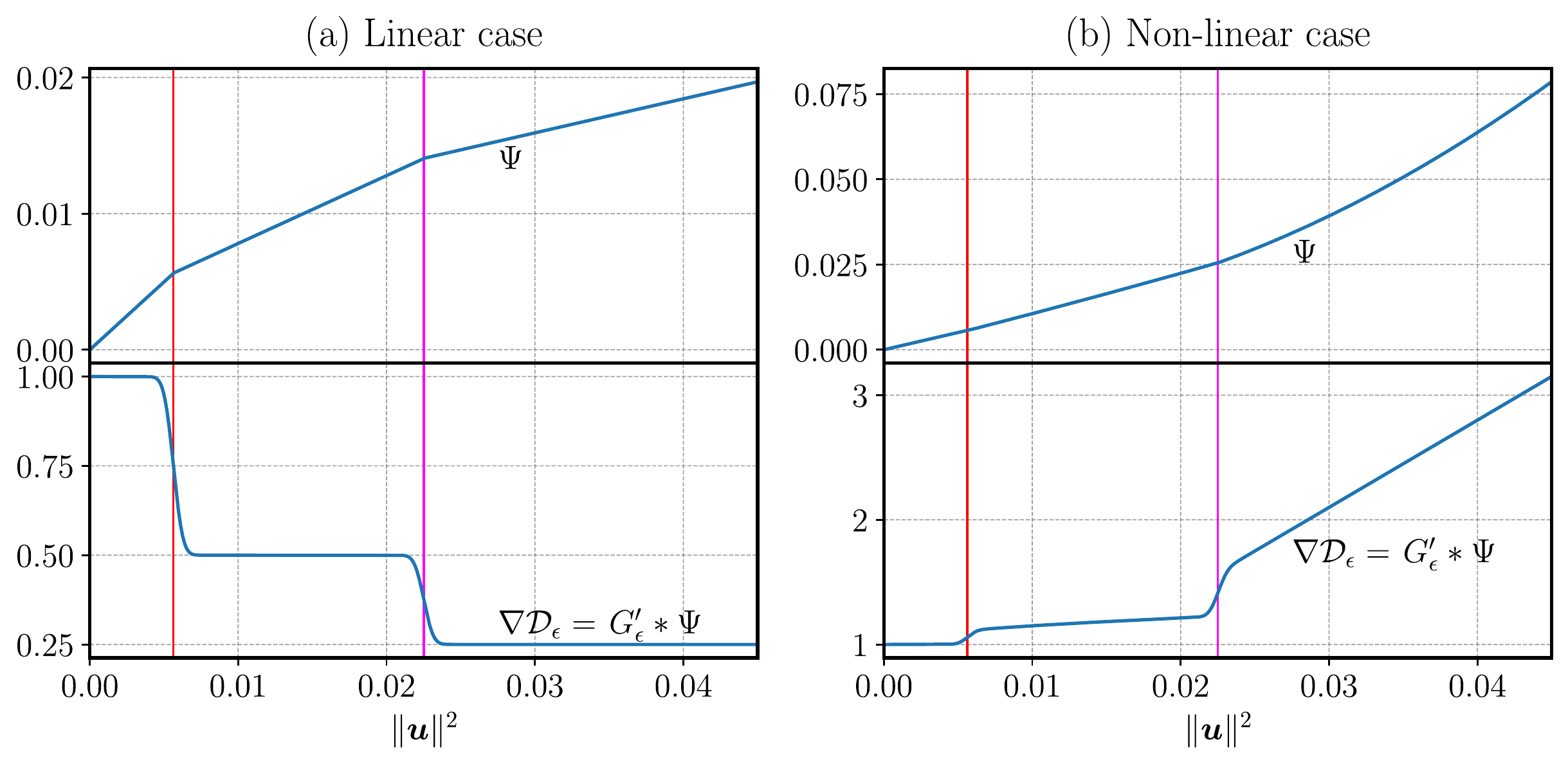}%
    \caption{Regularized-dissipation integrand $\Psi_\e$ (top) and regularized inverse permeability $\phi_\e$ (bottom) as functions of the square velocity for both the linear and nonlinear cases of Section~\ref{subsec:example2}; thresholds $\bar u_1^2 = 0.005625$ and $\bar u_2^2=0.0225$ represented by vertical lines}%
    \label{fig:ex2_convolution}
\end{figure}

The algorithm \eqref{eq:discr_mollified} converges in two iterations with relative error of the order of the machine precision; the solution obtained are reported in Figure~\ref{fig:ex2_linear_solutions}.
\begin{figure}[tbp]
    \centering
    \includegraphics[width=0.66\textwidth]{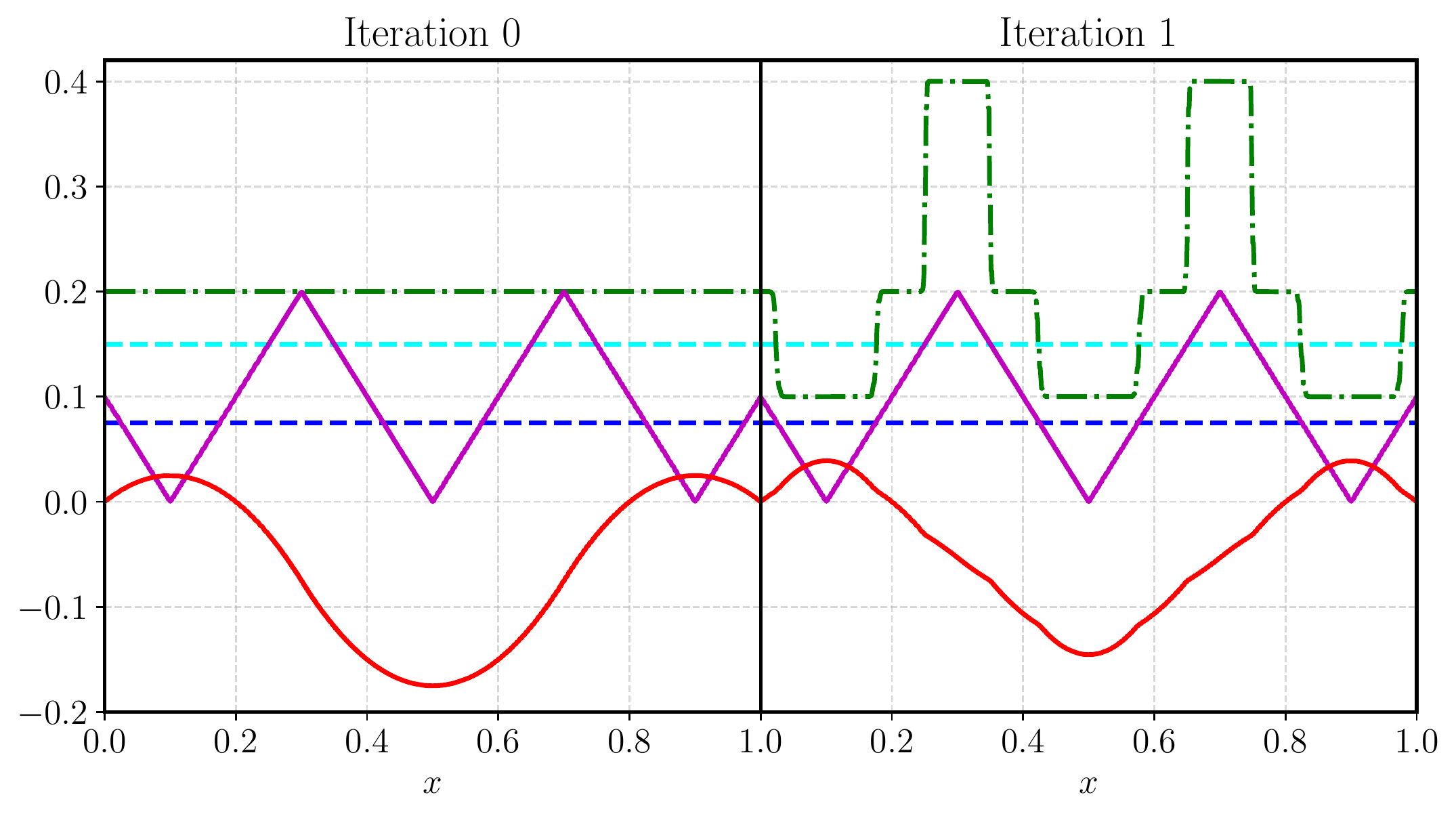}
    \includegraphics[width=0.75\textwidth]{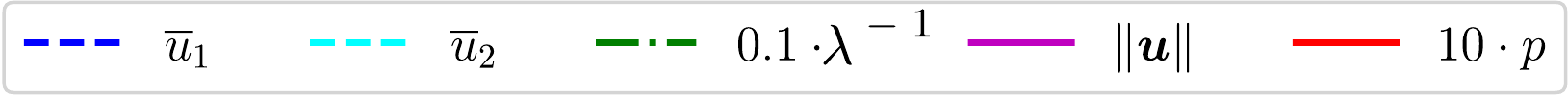}%
    \caption{Solutions for the problem of Section~\ref{subsubsec:examples:case2:linear} at different algorithm iterations}%
    \label{fig:ex2_linear_solutions}
\end{figure}
We notice the effect of the two thresholds that create three different inverse permeability plateaus, smoothly connected.

This simple test case showcases the flexibility of the new approach with multiple transition laws, the generalization to even more laws being immediate.

\subsubsection{Nonlinear case} \label{subsubsec:examples:case2:nonlinear}

We still assume the same velocity thresholds as before but we consider that, for low velocity, the Darcy part is predominant and it is the only one that needs to be modeled. By increasing the velocity, the nonlinear effects start to appear and the Darcy--Forchheimer law is more appropriate. Finally, for high velocity, the nonlinear part is predominant and we thus consider only a Forchheimer law. For all $a\geq0$, we set
\begin{gather*}
  \Psi_1(a) = \lambda_{01}a,\\
  \Psi_2(a) = \lambda_{01} a + \lambda_{12} (a ^{3/2} - \bar u_1^3),\\
  \Psi_3(a) = \lambda_{01}\bar u_2^2 + \lambda_{12} (\bar u_2^{3/2} - \bar u_1^3) + \lambda_{23}  (a ^{2} - \bar u_2^4),
\end{gather*}
where $\lambda_{01}=1$, $\lambda_{12}=1$ and $\lambda_{23}=35$ and the integration constant are computed using \eqref{eq:cj}. The graphical representations of $\Psi_\e := \Psi_{0,\e}+\Psi_{1,\e}+\Psi_{2,\e}$ and $\phi_\e := \phi_{0,\e}+\phi_{1,\e}+\phi_{2,\e}$ ($m=2$) are given in Figure~\ref{fig:ex2_convolution}(b), where we can recognize the two linear laws and the linear one.

The solution, for different iterations, is represented in Figure~\ref{fig:ex2_non-linear_solutions}, where we can notice the three different regions associated to the different flow regimes. Also in this case, the extension to multiple nonlinear laws is rather immediate once the functions $\Psi_i$ are properly defined.
\begin{figure}[tbp]
    \centering
    \includegraphics[width=0.66\textwidth]{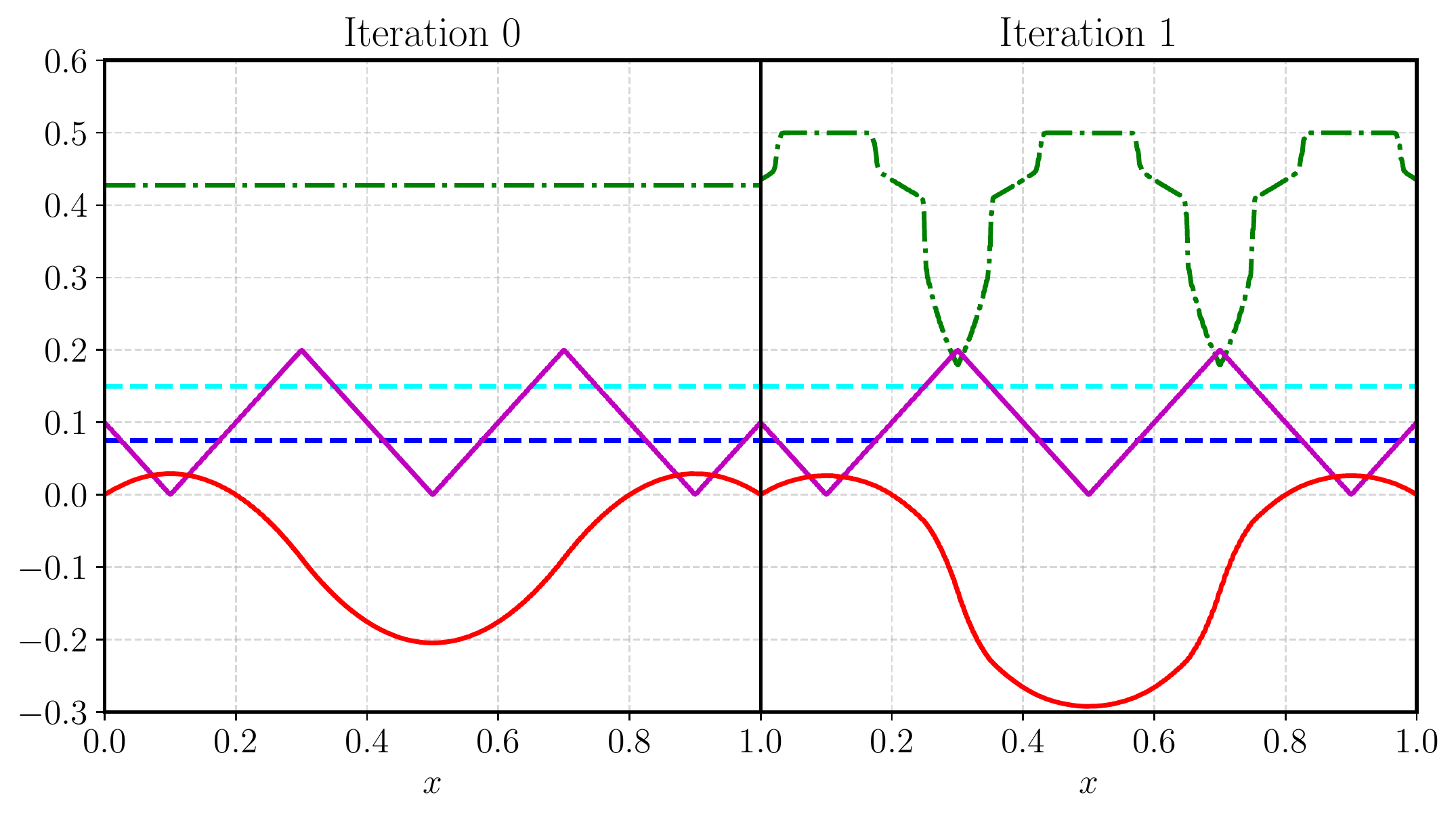}
    \includegraphics[width=0.75\textwidth]{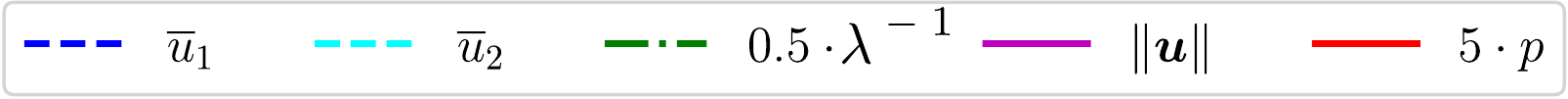}%
    \caption{Solutions for the problem of Section~\ref{subsubsec:examples:case2:nonlinear} at different algorithm iterations}%
    \label{fig:ex2_non-linear_solutions}
\end{figure}

\subsection{Two-dimensional example}\label{subsec:2dexample}

We consider Layer 35 (starting the numeration from 1) of the well known 10th SPE Comparative Solution Project (SPE10) dataset, described in~\cite{Christie2001}. It is a two-dimensional domain of size $365.76\times670.56$ metres composed of a structured grid of $60\times 220$ elements. In each element, a background permeability is associated and can vary abruptly between two neighboring elements, an example is reported on the left in Figure~\ref{fig:ex2d_permeabiltiy}. We choose $\e=10^{-3}$ as mollification parameter and $\tau=10^{-6}$ as tolerance in \eqref{eq:discr_mollified}. Further, we set the source terms as $q\equiv 0$ and $\bff\equiv \bnull$.
\begin{figure}[tbp]
    \centering
    \includegraphics[width=0.45\textwidth]{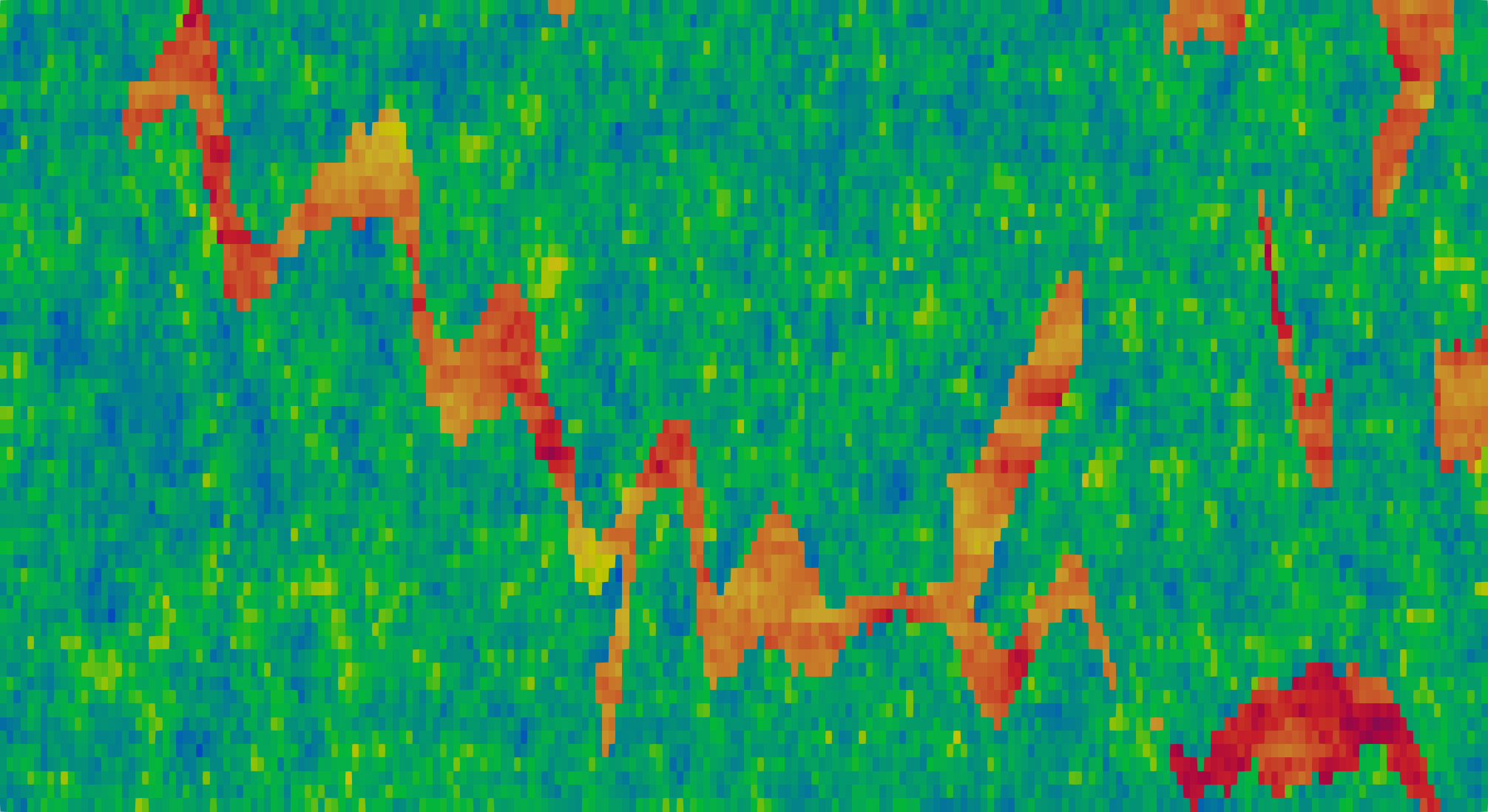}%
    \hspace*{0.00625\textwidth}%
    \includegraphics[width=0.45\textwidth]{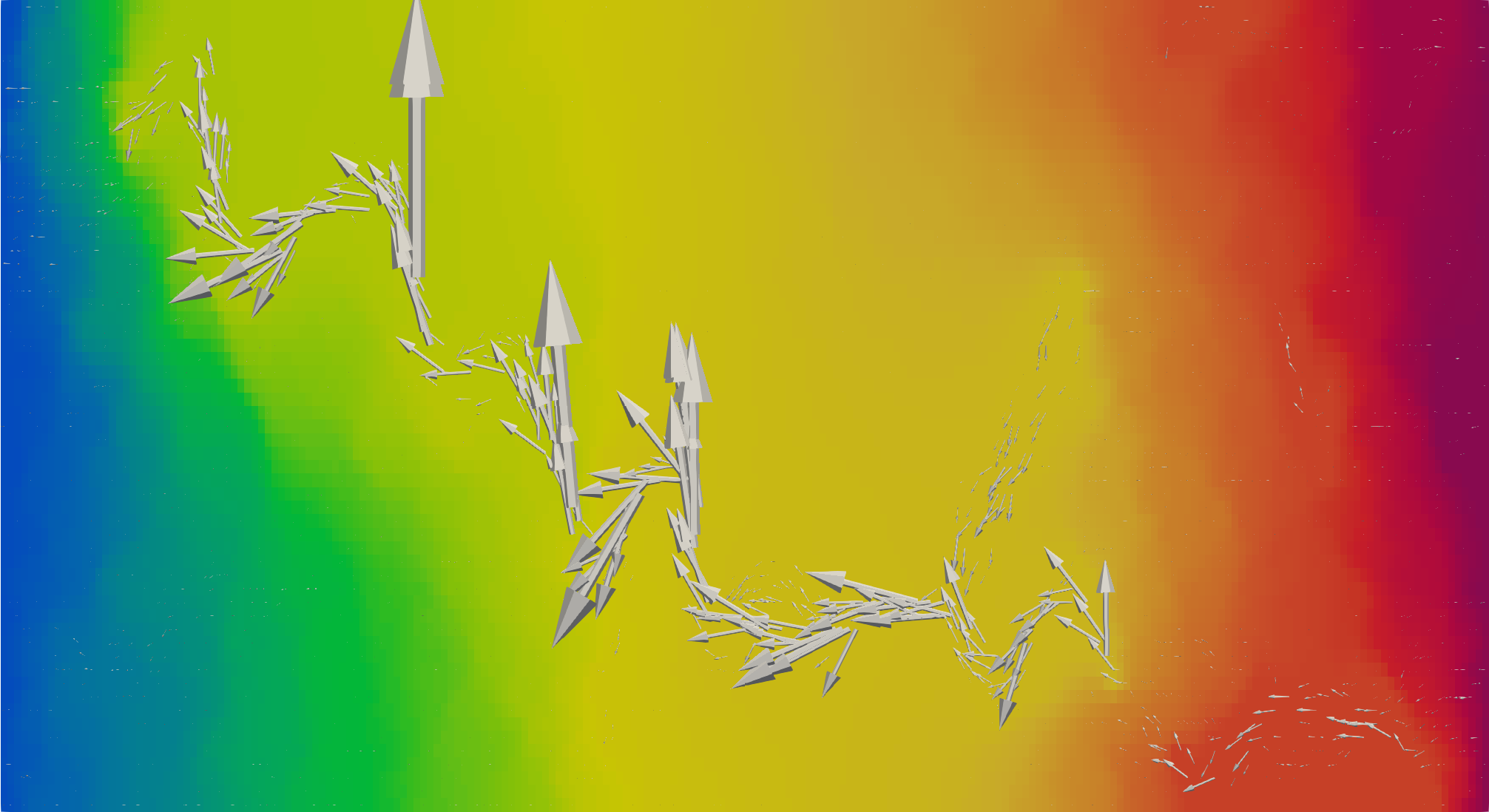}
    \includegraphics[width=0.45\textwidth]{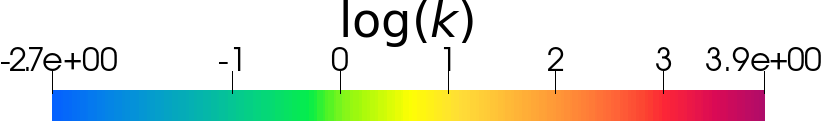}%
    \hspace*{0.00625\textwidth}%
    \includegraphics[width=0.45\textwidth]{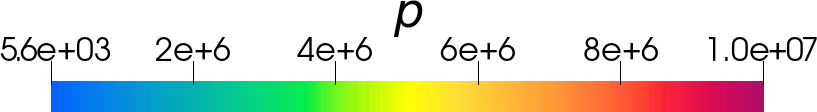}
    \caption{Permeability field of Layer 35 of the SPE10 test case (left) and pressure and velocity solution for the case of Section~\ref{subsec:2dexample} with $\alpha=1$ and $\beta=10$ (right)}%
    \label{fig:ex2d_permeabiltiy}
\end{figure}

In the domain, we allow both a Darcy model and a Darcy--Forchheimer model, the former being used in the slow region and the latter in the fast region. We consider thus the following drag operator:
\begin{gather*}
    \bLambda(\bu) =
    \begin{dcases*}
        \lambda_{\mt{bg}} \bu & in $\Omega_1(\bu)$,\\
        \lambda_{\mt{bg}}[0.1 + \beta \bar u,1] \bu & in $\Gamma(\bu)$,\\
        \lambda_{\mt{bg}}(0.1 + \beta \norm{\bu})\,\bu & in $\Omega_2(\bu)$.
    \end{dcases*}
\end{gather*}
where the threshold velocity $\bar u$ is given by $\alpha 10^{-7}$ and $\lambda_{\mt{bg}}$ denotes the background inverse permeability given by the benchmark data; here, $\alpha$ and $\beta$ are two parameters that may change. We set pressure boundary conditions on the left and right parts of the domain, respectively, with values $0$ and $10^7 \sib{\pascal}$. The top and bottom boundaries are set to have no flow. A representative solution obtained using the regularized algorithm is given on the right in Figure~\ref{fig:ex2d_permeabiltiy}.

We first vary the value of the threshold velocity to understand its impact: the lower, the more elements should belong to $\Omega_2(\bu)$ (with the Darcy--Forchheimer law). Since the latter is the fast region, we expect it is focused on the regions of high background permeability. We set $\beta=10$. Figure~\ref{fig:ex2d_thresold} shows the regions $\Omega_1(\bu)$ and $\Omega_2(\bu)$ for smaller values of $\bar u$ with $\alpha \in \{1, 2^{-1}, 2^{-2}, 2^{-3}, 2^{-4}, 2^{-5} \}$.
\begin{figure}[tbp]
    \centering
    \includegraphics[width=0.45\textwidth]{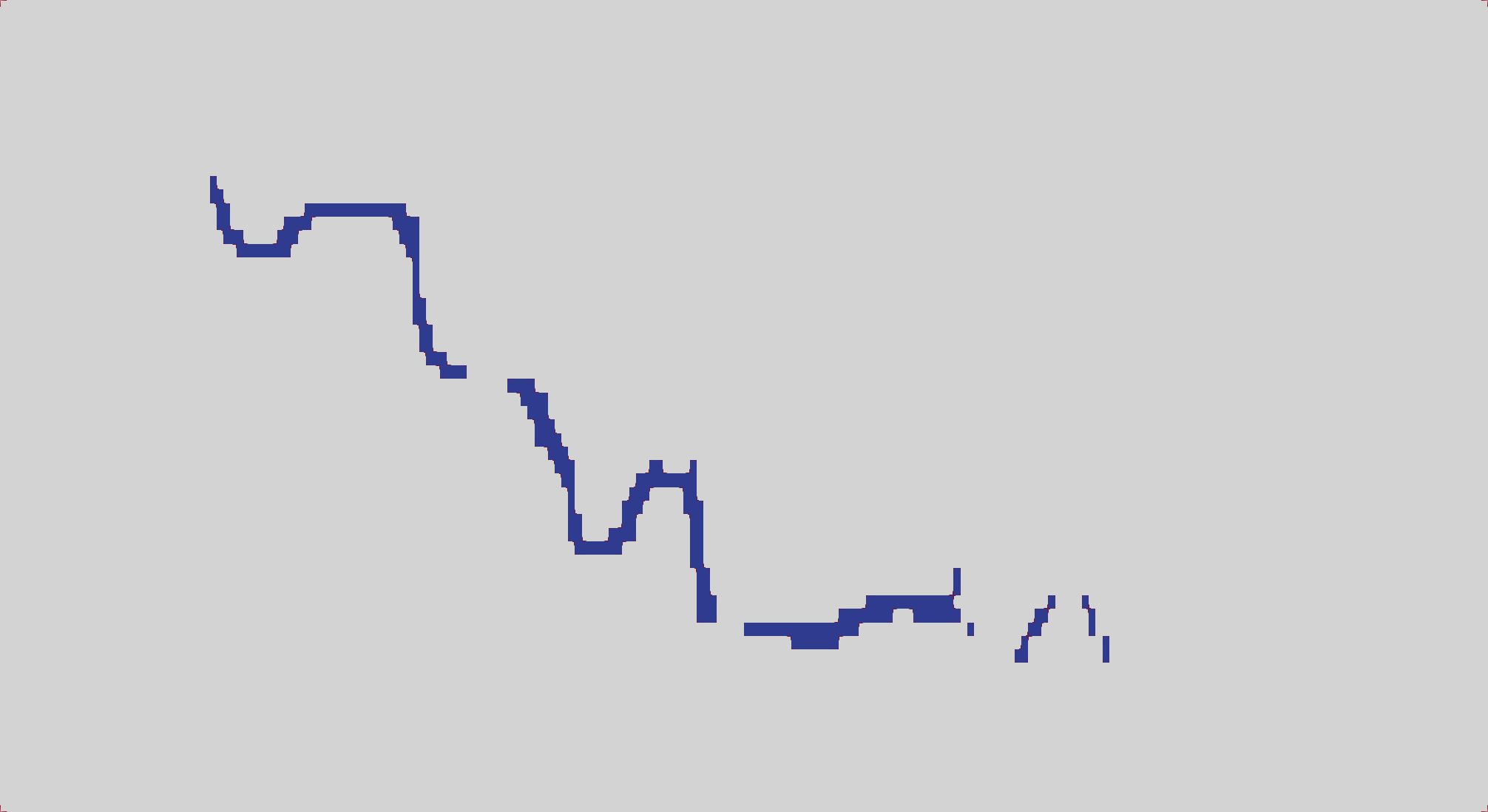}%
    \hspace*{0.00625\textwidth}%
    \includegraphics[width=0.45\textwidth]{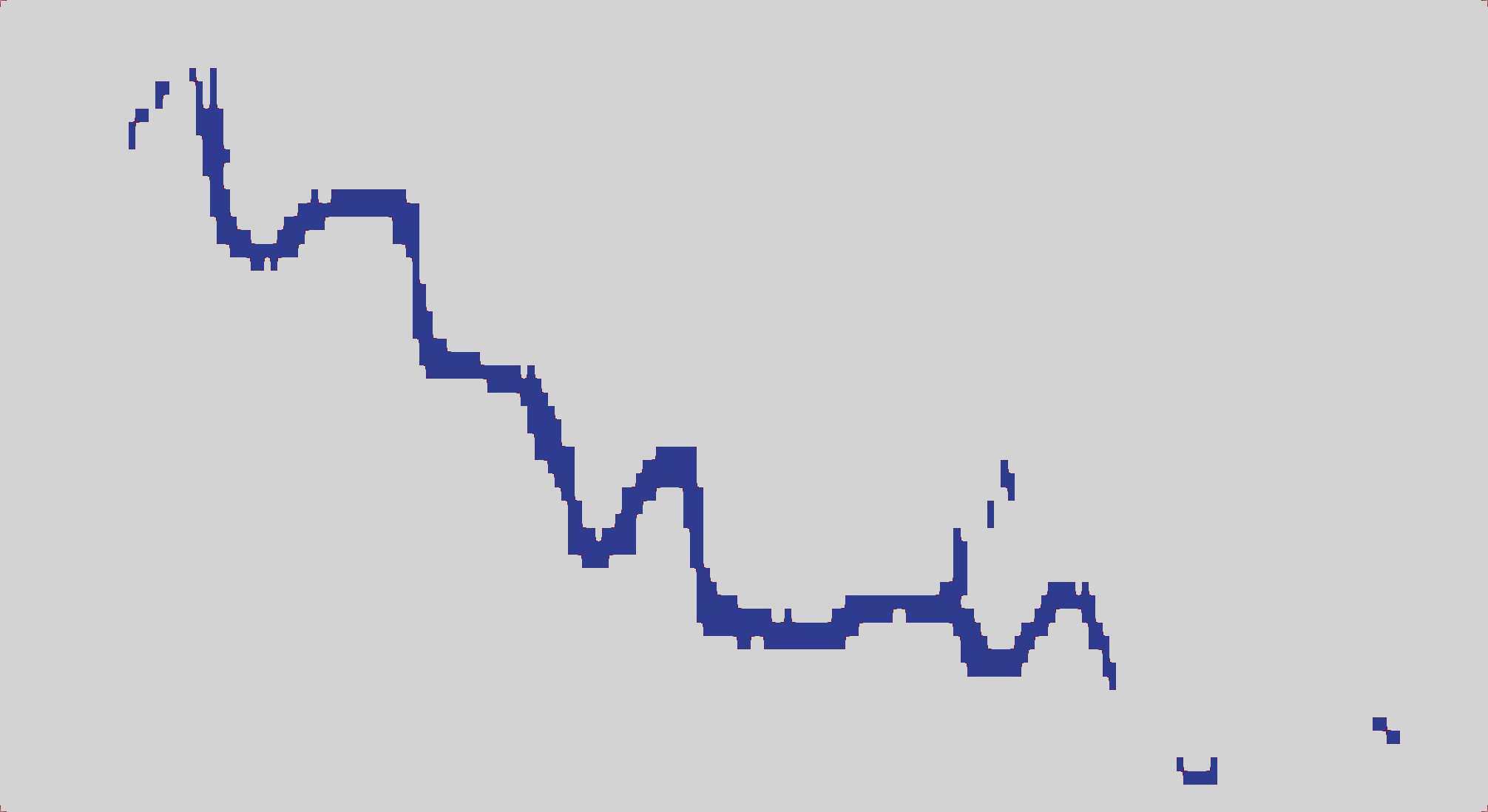}
    \includegraphics[width=0.45\textwidth]{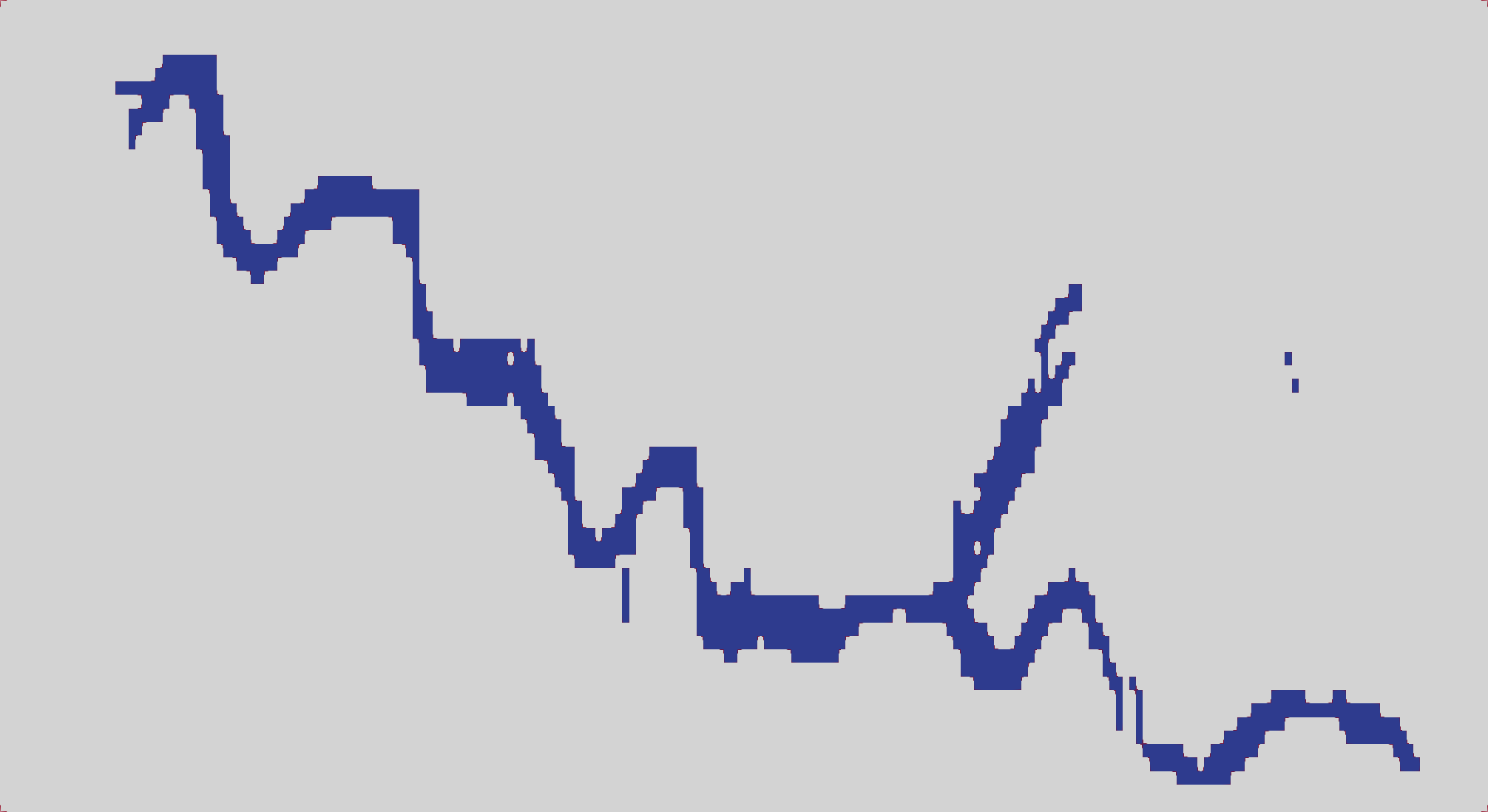}%
    \hspace*{0.00625\textwidth}%
    \includegraphics[width=0.45\textwidth]{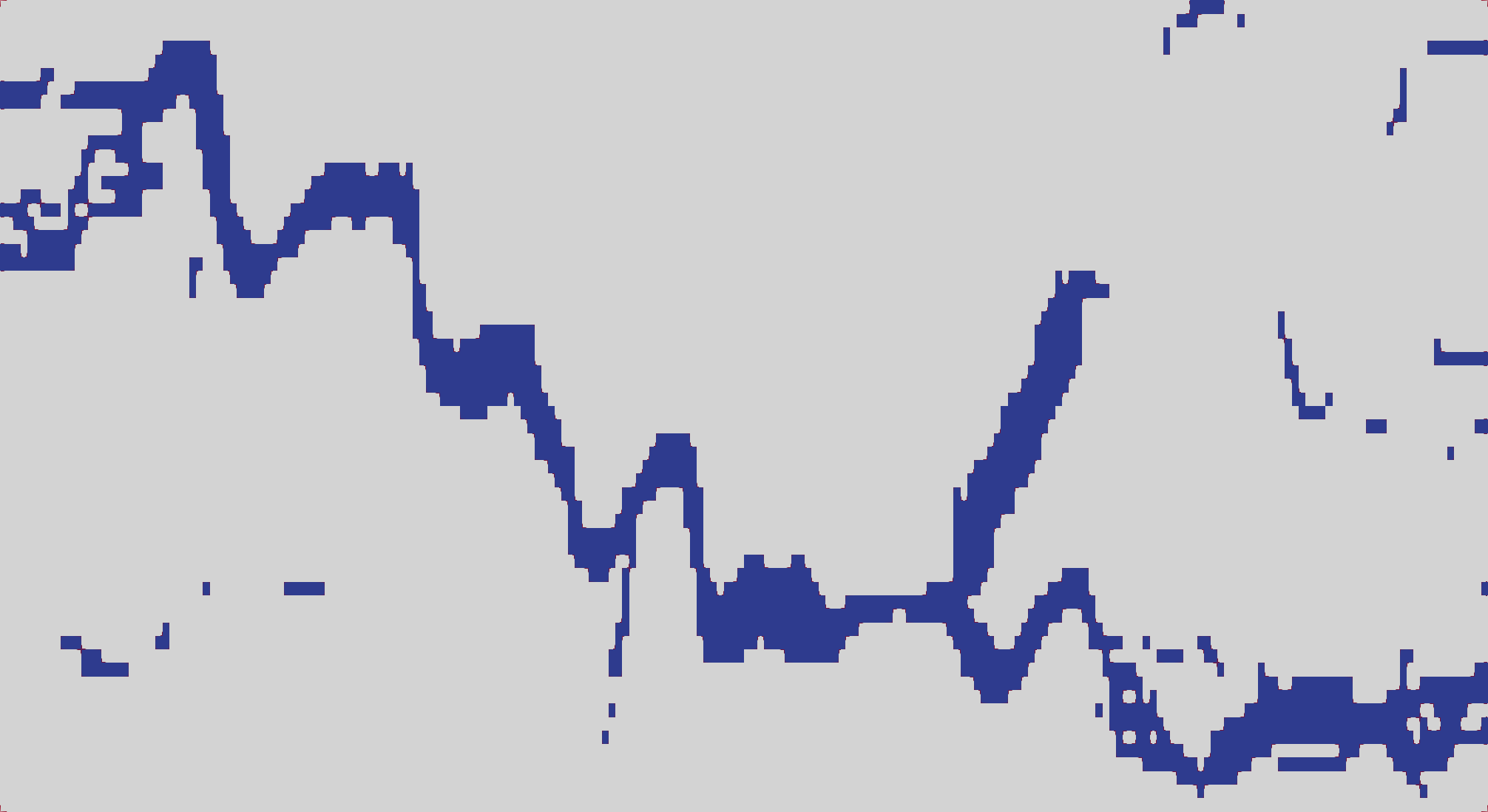}
    \includegraphics[width=0.45\textwidth]{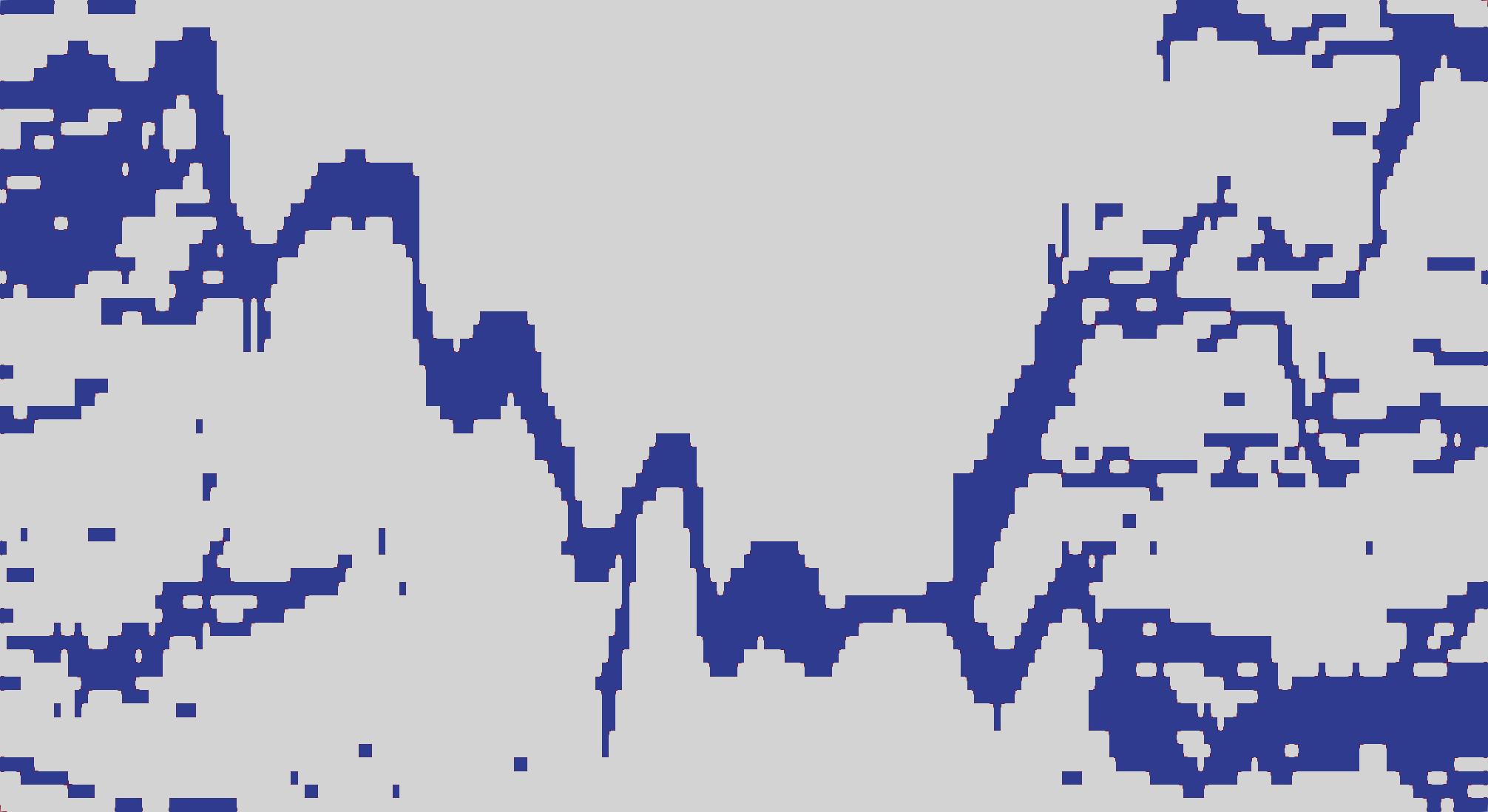}%
    \hspace*{0.00625\textwidth}%
    \includegraphics[width=0.45\textwidth]{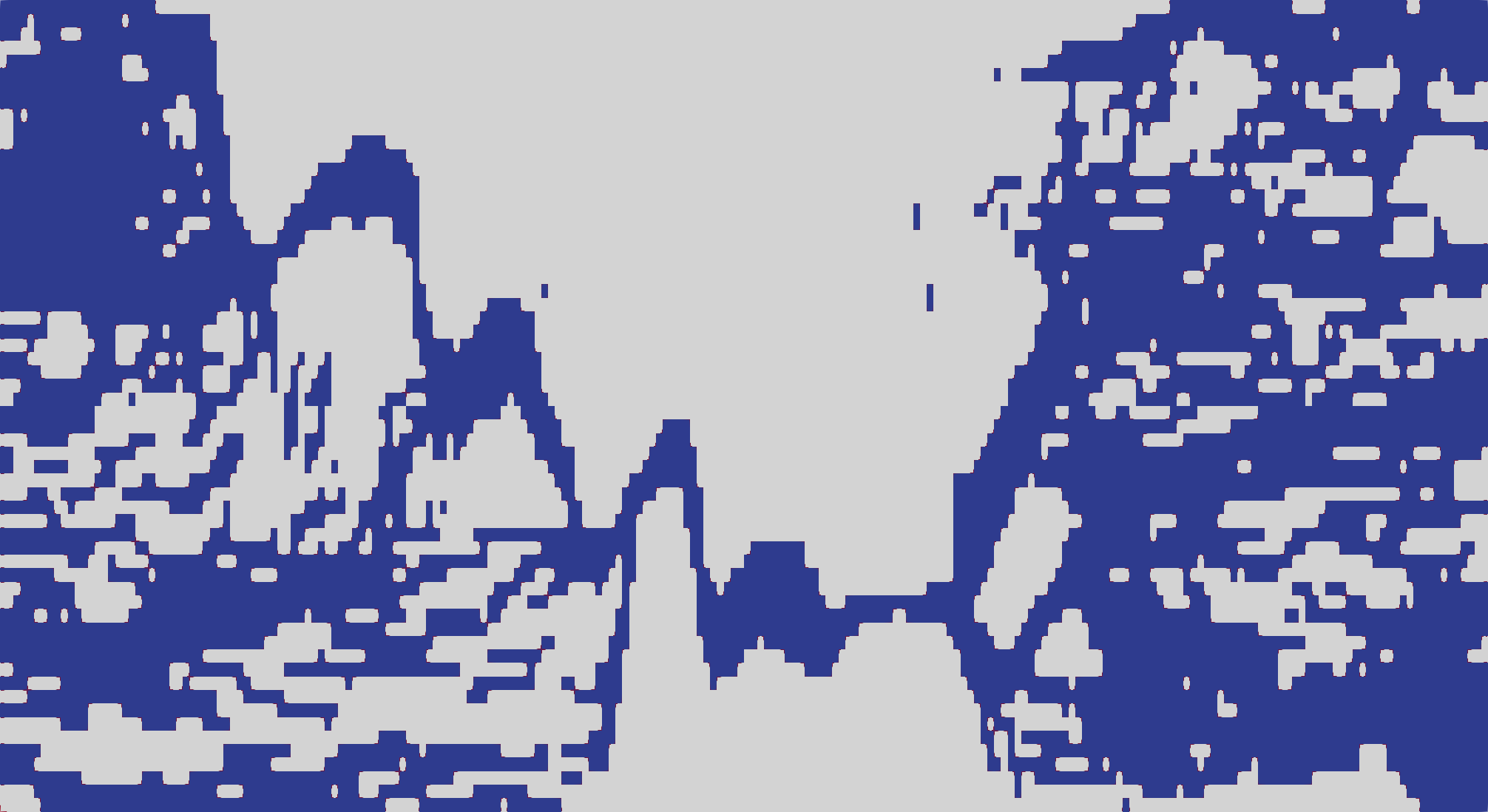}
    \caption{Configurations of $\Omega_1(\bu)$ (gray) and $\Omega_2(\bu)$ (blue) for the test case of Section~\ref{subsec:2dexample}, with different values of $\alpha$ in decreasing order going from left to right and from top to bottom, and $\beta=10$}%
    \label{fig:ex2d_thresold}
\end{figure}
In Figure~\ref{fig:ex2d_thresold}, we report the configurations obtained for $\Omega_1(\bu)$ and $\Omega_2(\bu)$. We notice that for high values of $\alpha$, only a narrow channel allows the presence of the Darcy-Forchheimer model, mostly where the background permeability is already high. When the value of $\alpha$ gets smaller, more intricate configurations appear, showing that the smaller channels ``attract'' the fast flow model. The number of iterations needed for the convergence are $\{2, 2, 2, 3, 4, 6\}$, respectively, for decreasing values of $\alpha$.

As a second experiment, we fix the value of $\alpha=2^{-5}$ and we increase $\beta$ in $\{10, 100, 500, 1000\}$. The expected effect is that the effective permeability in the Darcy--Forchheimer becomes smaller and so does the velocity. As a consequence, more cells should belong to $\Omega_1(\bu)$ and fewer to $\Omega_2(\bu)$.
\begin{figure}[tbp]
    \centering
    \includegraphics[width=0.45\textwidth]{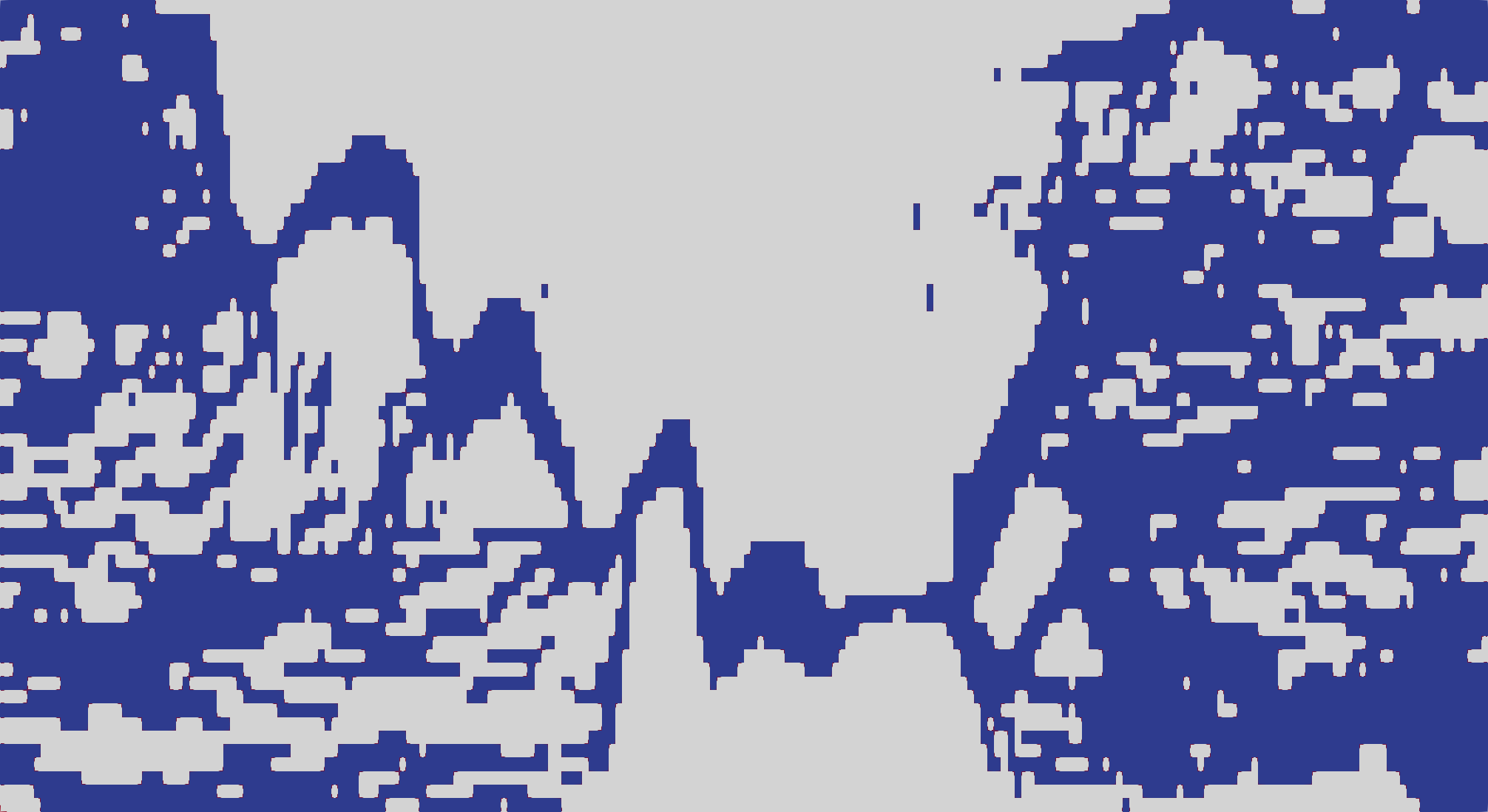}%
    \hspace*{0.00625\textwidth}%
    \includegraphics[width=0.45\textwidth]{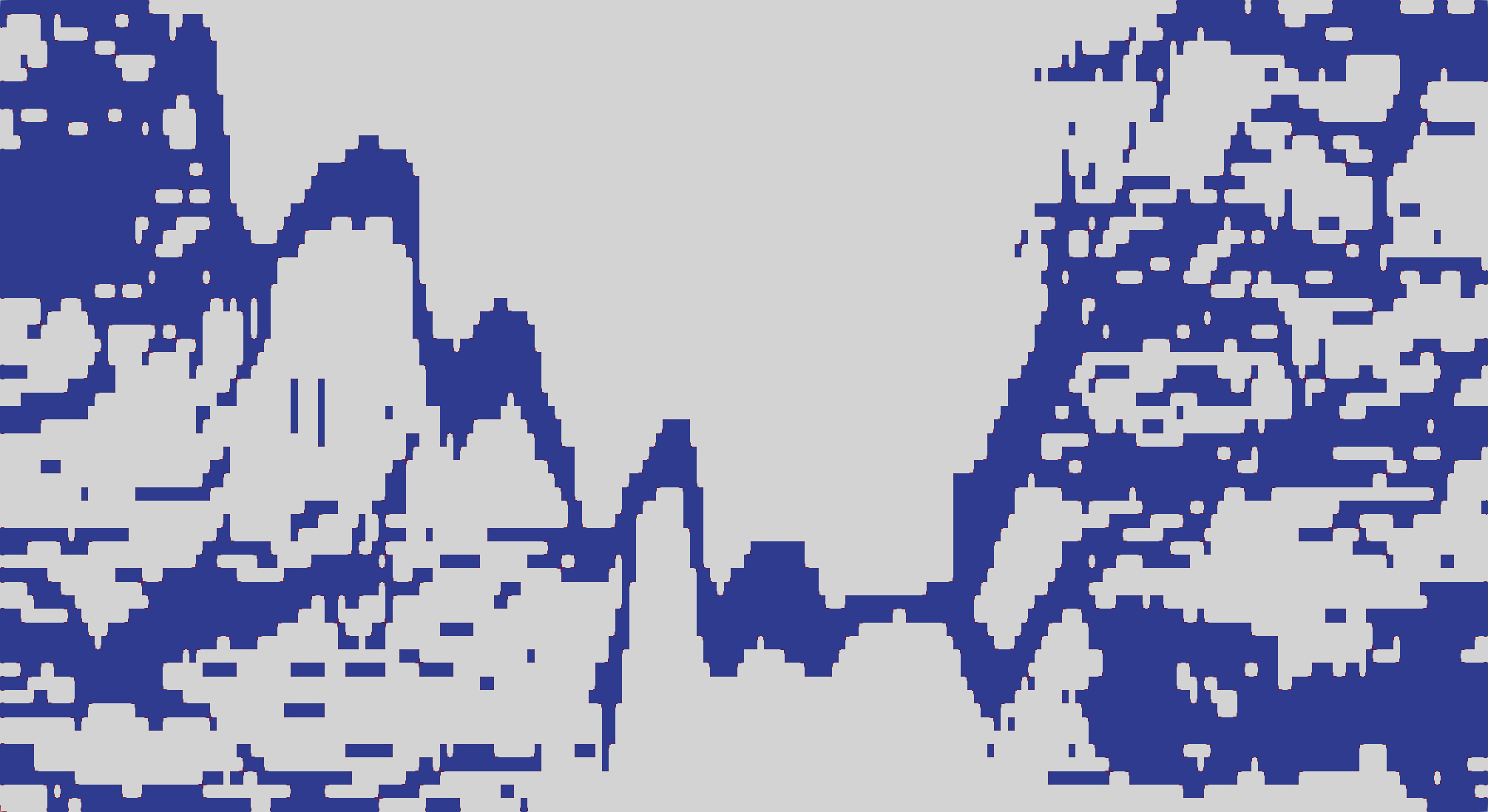}
    \includegraphics[width=0.45\textwidth]{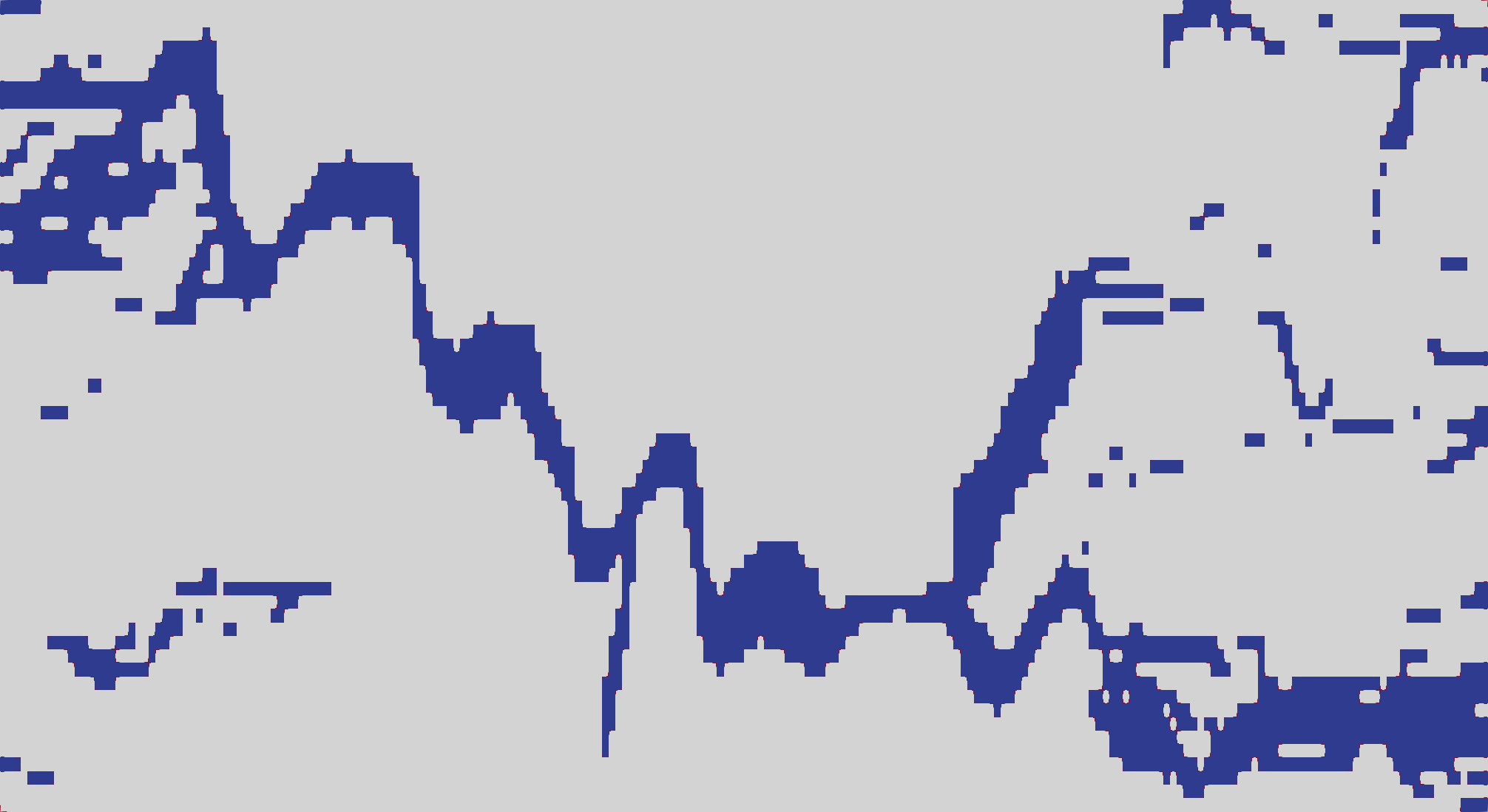}%
    \hspace*{0.00625\textwidth}%
    \includegraphics[width=0.45\textwidth]{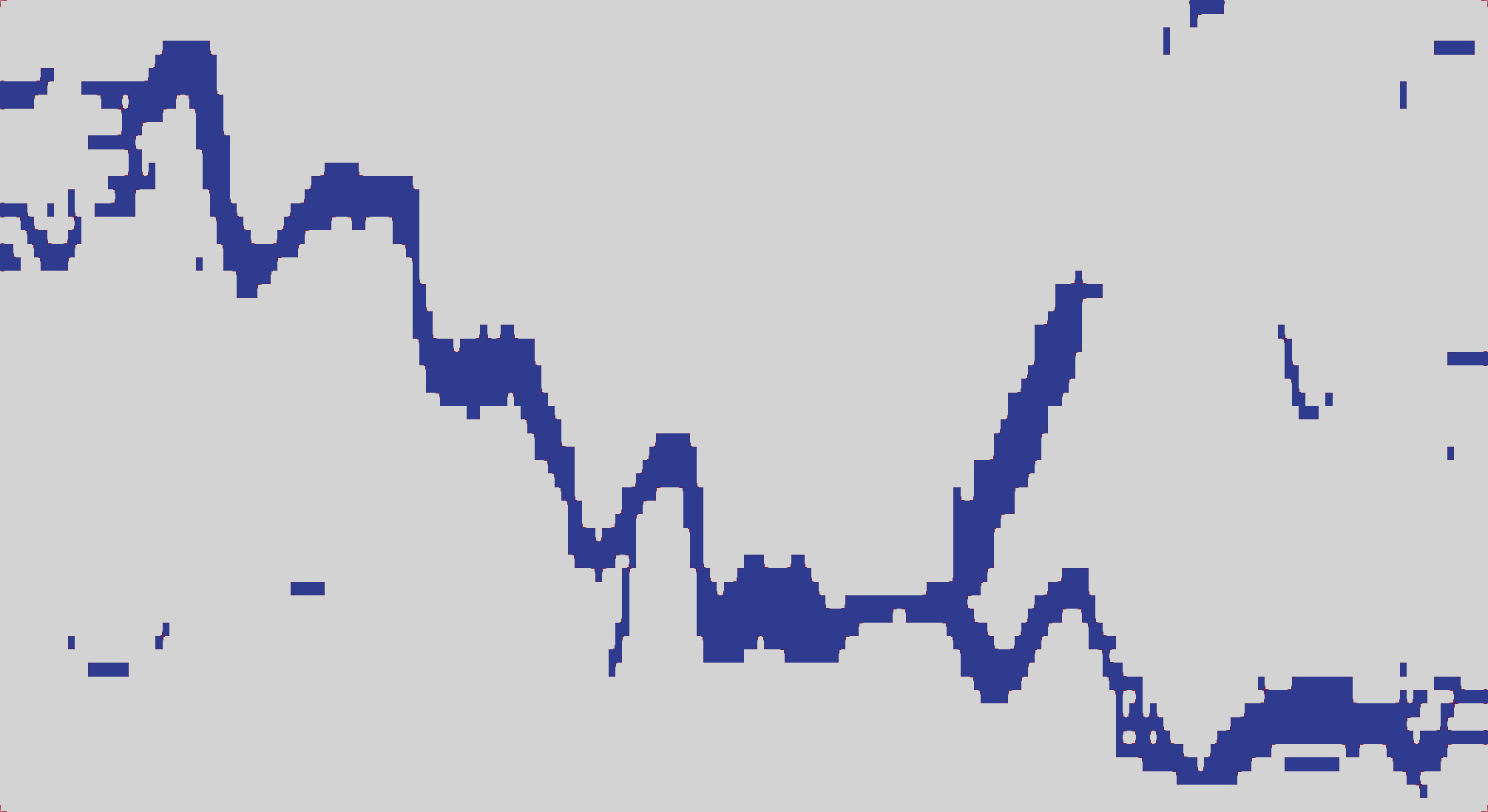}
    \caption{Configurations of $\Omega_1(\bu)$ (gray) and $\Omega_2(\bu)$ (blue) for the test case of Section~\ref{subsec:2dexample} with different values of $\beta$ in increasing order going from left to right and from top to bottom, and $\alpha=2^{-5}$}%
    \label{fig:ex2d_nonlinear}
\end{figure}
Figure~\ref{fig:ex2d_nonlinear} shows the following phenomenon: the higher the value of $\beta$, the smaller the effective permeability and so the smaller the velocity. Only with a background permeability already quite high is it possible to allow a Darcy--Forchheimer model even for high values of the Forchheimer coefficient $\beta$.
 The number of iterations needed for the convergence are $\{6, 2, 2, 2\}$, respectively, for increasing values of $\beta$.

What this test case displays is that our regularized scheme provides results which, albeit qualitatively expected, are complex to forecast otherwise. It also shows that the setup for two-dimensional (and, in fact, three-dimensional) simulations is immediate thanks to the mollification approach which is dimension-independent.


\section{Conclusion} \label{sec:conclusion}

In this work, we have presented a mathematical framework for adaptively choosing the
most appropriate constitutive law depending on the developed fluid velocity. The
problem is mathematically formulated as a multivalued problem and, under the hypothesis of maximal monotonicity of the drag operator, the problem has been shown to be weakly well posed. If the drag operator fails to be monotone, we have shown existence of weak solutions when $d=1$. Moreover, we have derived a monovalued regularization of the drag operator, which yields a well posed problem converging to the original multivalued problem when the convexity of the dissipation holds. When the convexity fails, the existence of solutions to the regularized problem is at least ensured if $d=1$, although the convergence is not. Compared to the transition-zone tracking algorithm presented in~\cite{FP21}, the resulting variational scheme is easier to implement (as no transition-zone tracking is required), generalizes to multiple flow regimes, and extends to higher space dimensions. We have validated the approach on three test cases, and thus showcased its applicability and flexibility.

As possible directions for future investigation, we may think of generalizing the approach presented here to drag operators involving space derivatives of the velocity (such as in the case of Brinkman's law). We may also want to study the non-monotone, nonconvex case, for which we only have existence when $d=1$ and have no convergence result of the regularized problem. Finally, in the context of fractured porous media, we may want to explore the possibility of applying such an adaptive and regularized approach in the fractures only and couple it with a classical Darcy flow in the rest of the porous medium.


\appendix

\section{Multivalued operators and functionals}
\label{sec:noti-mult-oper}

We give here the basic definitions and facts regarding multivalued, and in fact monovalued, operators which are used in the main body of the text.

We call \emph{duality system} a triple $(X,Y,b)$ if $X$ and $Y$ are real Banach spaces and $b$ is a nondegenerate bilinear form on $Y\times X$ (referred to as a \emph{dual pairing}). 
In particular, if $X$ is a real Banach space and $X^*$ its topological dual, then there exists a canonical dual pairing $b^*$ such that $(X,X^*,b^*)$ is a duality system; whenever given such a pair $(X,X^*)$, we always assume it is equipped with its canonical dual pairing.

Recall that, given a pair of real Banach spaces $(X,Y)$ and a multivalued operator $A\: X \rightrightarrows Y$, we call \emph{graph} of $A$ the set $\graph(A)$ defined by
\begin{equation*}
  \graph(A) = \{ (x,y)\in X\times Y \st y \in A(x)\}.
\end{equation*}
 
\begin{defn}[monotone operator]
  \label{defn:monotone-op}
  Let $(X,Y,b)$ be a duality system and let $A\:X \rightrightarrows Y$ be a multivalued operator. We say that $A$ is \emph{monotone} if 
  \begin{equation}
    \label{eq:monotone-op}
    b(y_1-y_2,x_1-x_2) \geq 0 \quad \text{for all $(x_1,y_1),(x_2,y_2)\in\graph(A)$}.
  \end{equation}
In this case, we say that $A$ is \emph{maximal} if there is no monotone operator $B\:X\rightrightarrows Y$ such that $\graph(A)$ is strictly included in $\graph(B)$. We furthermore say that $A$ is \emph{strictly} monotone if the inequality in \eqref{eq:monotone-op} is strict whenever $x_1\neq x_2$.
\end{defn}
\noindent Note that, according to the above definition, a monotone operator $A$ is positive semidefinite in the sense that $b(y,x)\geq 0$ for all $(x,y)\in\graph(A)$.

Coercivity, boundedness and set-continuity are important notions on operators:
\begin{defn}[coercive operator]
  \label{defn:coercive-op}
  Let $(X,Y,b)$ be a duality system, denote by $\norm{\cdot}_X$ the norm on $X$ and fix $\sigma\geq1$. We say that a multivalued operator $A\:X \rightrightarrows Y$ is \emph{$\sigma$-coercive} if there exists a map $c\:\R_+\to\R$ so that $c(a) \to \infty$ as $a\to\infty$ and 
  \begin{equation*}
    b(y,x) \geq c(\norm{x}_X)\norm{x}_X^\sigma \quad \text{for all $(x,y)\in\graph(A)$}.
  \end{equation*}
\end{defn}
\noindent Note that a coercive operator $A\:X\rightrightarrows \R$ is necessarily bounded below.

\begin{defn}[bounded operator]
  \label{defn:bounded-op}
  Let $(X,\norm{\cdot}_X)$ and $(Y,\norm{\cdot}_Y)$ be real Banach spaces and fix $\sigma\geq0$. We say that a multivalued operator $A\:X \rightrightarrows Y$ is \emph{$\sigma$-bounded} if there is a constant $C>0$ such that
  \begin{equation*}
    \norm{y}_Y \leq C(1+\norm{x}_X^\sigma) \quad \text{for all $(x,y)\in\graph(A)$}.
  \end{equation*}
\end{defn}

\begin{defn}[set-continuous operator]
  \label{defn:lip-op}
  Let $X$ and $Y$ be real Banach spaces. We say that a multivalued operator $A\:X \rightrightarrows Y$ is \emph{set-continuous} if, for every sequence $(x_n)_n \subset X$ converging to some $x\in X$, we have $\dist(A(x_n),A(x)) \to 0$ as $n\to\infty$, i.e., there exists a sequence $(y_n)_n\subset Y$ with $y_n\in A(x_n)$ for all $n$ converging to an element of $A(x)$. If $A$ is monovalued, we simply say that $A$ is \emph{continuous}.
\end{defn}
\noindent Extending the definitions of coercivity, boundedness and set-continuity to operators defined on merely a convex subset of a Banach space is immediate.

\section{Functionals}
\label{sec:functionals}

We recall the main notions and facts on functionals used in this paper. In particular, we discuss the concepts of subdifferential and $\Gamma$-convergence.

\subsection{Subdifferentials}
\label{sec:subdifferentials}

Let us start with the notion of Clarke subdifferential and some of its properties:
\begin{defn}[Clarke subdifferential~\cite{Clarke90}]
  Let $(X,Y,b)$ be a duality system. Given $\F\:X\to\R$ locally Lipschitz continuous, we call \emph{Clarke subdifferential} of $\F$ the multivalued operator $\p\F\: X\rightrightarrows Y$ defined, for all $x\in X$, by
  \begin{equation*}
    \p\F(x) = \left\{ y \in Y \st \forall\, z\in X,\; \limsup_{x'\to x,\, \delta\downarrow 0} \frac{\F(x'+\delta z) - F(x')}{\delta} \geq b(y,z) \right\}.
  \end{equation*}
We say that $\F$ is \emph{differentiable} if its subdifferential is monovalued, in which case we write $\p\F(x) = \{\grad \F(x)\}$ for all $x\in X$.
\end{defn}

\begin{prop}[properties of the Clarke subdifferential]
  \label{prop:clarke}
  Let $(X,Y,b)$ be a duality system, and let $\F\:X\to\R$ be locally Lipschitz continuous. Then, $\p \F(x)$ is nonempty, convex and compact for all $x\in X$. If moreover $\F$ is convex, then, for all $x\in X$, the Clarke and \emph{Fréchet} subdifferentials of $\F$ coincide, i.e.,
  \begin{equation*}
    \p\F(x) = \left\{ y \in Y \st \forall\, z\in X,\; \liminf_{\delta\downarrow 0} \frac{\F(x+\delta z) - F(x)}{\delta} \geq b(y,z) \right\};
  \end{equation*}
  in this case, we simply refer to $\p\F$ as the subdifferential of $\F$.
\end{prop}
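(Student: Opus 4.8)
The plan is to work throughout with the Clarke generalized directional derivative $\F^\circ(x;z):=\limsup_{x'\to x,\,\delta\downarrow 0}\frac{\F(x'+\delta z)-\F(x')}{\delta}$, in terms of which $\p\F(x)=\{y\in Y \st b(y,z)\leq \F^\circ(x;z)\text{ for all }z\in X\}$. First I would use that $\F$ is locally Lipschitz near $x$, say with constant $L$: for $x'$ close to $x$ and $\delta$ small, $\abs{\F(x'+\delta z)-\F(x')}\leq L\delta\norm{z}$, so that $\abs{\F^\circ(x;z)}\leq L\norm{z}$ and $\F^\circ(x;\cdot)$ is finite-valued. Positive homogeneity in $z$ is immediate, and subadditivity follows by splitting $\F(x'+\delta(z_1+z_2))-\F(x')$ through the intermediate point $x'+\delta z_1$ and passing to the limsup; hence $\F^\circ(x;\cdot)$ is sublinear.

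From sublinearity the three properties follow by soft arguments. Nonemptiness is Hahn--Banach: a sublinear functional dominates at least one linear functional $y$, which by construction satisfies $b(y,z)\leq\F^\circ(x;z)$ for all $z$, i.e.\ $y\in\p\F(x)$. Convexity and closedness are immediate, since $\p\F(x)$ is an intersection of the half-spaces $\{y \st b(y,z)\leq\F^\circ(x;z)\}$ over $z\in X$. Finally, the bound $\abs{b(y,z)}\leq L\norm{z}$ valid on $\p\F(x)$ gives $\norm{y}\leq L$, so $\p\F(x)$ is bounded and, being weak-* closed, weak-* compact by Banach--Alaoglu.

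For the convex case I would first record that convexity makes $\delta\mapsto\frac{\F(x+\delta z)-\F(x)}{\delta}$ nondecreasing, so the one-sided derivative $\F'(x;z):=\lim_{\delta\downarrow0}\frac{\F(x+\delta z)-\F(x)}{\delta}$ exists and coincides with the liminf in the Fréchet definition; the Fréchet subdifferential is therefore $\{y \st b(y,z)\leq\F'(x;z)\text{ for all }z\}$, and it suffices to prove $\F^\circ(x;z)=\F'(x;z)$. The inequality $\F^\circ(x;z)\geq\F'(x;z)$ is free by taking $x'=x$. For the converse I would fix $\eta>0$, choose $\delta_0$ with $\frac{\F(x+\delta_0 z)-\F(x)}{\delta_0}<\F'(x;z)+\eta$, use continuity of $\F$ to retain this inequality up to $2\eta$ for all $x'$ near $x$, and then invoke monotonicity of the quotient to bound it for every $\delta\leq\delta_0$; taking the limsup and letting $\eta\downarrow0$ yields $\F^\circ(x;z)\leq\F'(x;z)$. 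With $\F^\circ=\F'$ the defining inequalities of the two subdifferentials are identical, so they coincide.

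The step I expect to be the genuine obstacle is this reverse inequality $\F^\circ(x;z)\leq\F'(x;z)$: the limsup is taken over both $x'\to x$ and $\delta\downarrow0$ at once, and collapsing the extra supremum over $x'$ is precisely where the interplay between monotonicity of the difference quotient and the local Lipschitz bound is needed. The subadditivity of $\F^\circ(x;\cdot)$ in the first part is the other estimate requiring care, but it is routine by comparison.
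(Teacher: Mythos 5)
The paper itself gives no proof of this proposition: it is recalled verbatim from Clarke's monograph \cite{Clarke90}, and your argument is exactly the standard one found there (sublinearity of the generalized directional derivative $\F^\circ(x;\cdot)$ plus Hahn--Banach and Banach--Alaoglu for nonemptiness, convexity and compactness; regularity of convex locally Lipschitz functions, i.e.\ $\F^\circ(x;z)=\F'(x;z)$ via monotonicity of the convex difference quotient, for the second part), so the proposal is correct and matches the approach the paper implicitly relies on. The one point worth flagging is that the Hahn--Banach extension lives a priori in the full topological dual $X^*$, so nonemptiness of $\p\F(x)\subset Y$ (and the weak-$*$ compactness step) tacitly requires the pairing $b$ to identify $Y$ with $X^*$ --- which holds in every instance the paper uses, e.g.\ $Y=\bL^s(\Omega)\cong(\bL^r(\Omega))^*$ --- and with that reading your proof is complete.
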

\noindent Extending the definition of Clarke subdifferential to functionals defined on a convex subset of a Banach space is straightforward, resulting in multivalued operators defined on the subset.

Let us give the definition of critical point and minimizer and then provide some additional, more or less obvious, useful properties:
\begin{defn}[critical point and minimizer]
  Let $X$ be a real Banach space and let $\F\:X\to\R$ and $x\in X$. We say that $x$ is a \emph{critical point} of $\F$ if $0\in\p\F(x)$. We say that $x$ is a \emph{local minimizer} of $\F$ if there exists $\eta > 0$ such that for all $z\in X$ we have $\F(x+ \delta z) > \F(x)$ for all $\delta \in [0,\eta)$. We say that $x$ is a \emph{global minimizer} of $\F$ if $\F(x)\leq \F(z)$ for all $z\in X$.
\end{defn}

\begin{prop}[properties of critical points and minimizers]
  \label{prop:prop-critical-min}
  Let $X$ be a real Banach space and let $\F\:X\to\R$ and $x\in X$. The following assertions hold:
  \begin{enumerate}[label=(\roman*)]
    \item If $x$ is a global minimizer of $\F$, then $x$ is a local minimizer of $\F$.
    \item If $x$ is a local minimizer of $\F$, then $x$ is a critical point of $\F$.
    \item If $\F$ is convex, then $x$ is a local minimizer of $\F$ if and only if $x$ is a critical point of $\F$.
    \item If $\F$ is strictly convex, then there can exist at most one local minimizer of $\F$. 
  \end{enumerate}
\end{prop}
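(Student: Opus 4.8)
The plan is to prove the four assertions in the order stated, handling each separately and reusing earlier parts where convenient. Part (i) is immediate from the definitions: if $x$ is a global minimizer, then $\F(x) \leq \F(z)$ for every $z\in X$, so in particular $\F(x)\leq\F(x+\delta z)$ for every direction $z\in X$ and every scale $\delta$, which is exactly what is required for $x$ to be a local minimizer. No regularity or convexity is needed here.

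Part (ii) is the substantive point and the one I expect to be the main obstacle, since it is the only genuinely nonsmooth step. To show $0\in\p\F(x)$, I would return to the definition of the Clarke subdifferential and take $y=0$, so that $b(0,z)=0$; it then suffices to verify that the Clarke generalized directional derivative satisfies $\limsup_{x'\to x,\,\delta\downarrow 0}\frac{\F(x'+\delta z)-\F(x')}{\delta}\geq 0$ for every $z\in X$. The local-minimizer hypothesis only controls the difference quotient \emph{based at $x$ itself}: for all $\delta>0$ small enough one has $\F(x+\delta z)-\F(x)\geq 0$. The key observation is that the Clarke derivative, being a $\limsup$ over \emph{both} $x'\to x$ and $\delta\downarrow 0$, dominates the quantity $\limsup_{\delta\downarrow 0}\frac{\F(x+\delta z)-\F(x)}{\delta}$ obtained by freezing $x'=x$, and this latter limit is $\geq 0$ by local minimality. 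Hence the Clarke derivative in direction $z$ is nonnegative for every $z$, giving $0\in\p\F(x)$. The delicate aspect is precisely that one must pass through the generalized (base-point-varying) derivative rather than an ordinary directional derivative, so local-Lipschitz continuity of $\F$ is used implicitly to make $\p\F(x)$ meaningful.

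For part (iii), the forward implication is exactly part (ii) and requires no convexity. For the converse, I would invoke Proposition~\ref{prop:clarke}: when $\F$ is convex the Clarke and Fréchet subdifferentials coincide, so $0\in\p\F(x)$ unpacks to $\liminf_{\delta\downarrow 0}\frac{\F(x+\delta z)-\F(x)}{\delta}\geq 0$ for all $z\in X$. Since $\F$ is convex, the map $\delta\mapsto\frac{\F(x+\delta z)-\F(x)}{\delta}$ is nondecreasing on $(0,\infty)$, so its limit as $\delta\downarrow 0$ equals its infimum; this infimum being nonnegative forces $\F(x+z)\geq\F(x)$, and as $z$ is arbitrary, $x$ is in fact a global minimizer and hence a local one by part (i).

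For part (iv), I would argue by contradiction, assuming two distinct local minimizers $x_1\neq x_2$. First I would upgrade local to global minimality for convex $\F$: for any $z$ and small $t>0$, convexity gives $\F(x_i+t(z-x_i))\leq (1-t)\F(x_i)+t\F(z)$, while local minimality at $x_i$ makes the left-hand side $\geq\F(x_i)$, and rearranging yields $\F(x_i)\leq\F(z)$; thus each $x_i$ is a global minimizer and $\F(x_1)=\F(x_2)=:m$. Then strict convexity at the midpoint gives $\F\!\left(\tfrac12(x_1+x_2)\right) < \tfrac12\F(x_1)+\tfrac12\F(x_2)=m$, contradicting the global minimality of the common value $m$. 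Therefore $x_1=x_2$, establishing uniqueness.
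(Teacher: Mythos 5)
Your proof is correct in all four parts. The paper itself offers no proof of this proposition---it is stated in Appendix~B as one of several ``more or less obvious, useful properties''---so there is no argument to compare against; what you supply is the standard one: (i) is definitional, (ii) uses that the Clarke generalized derivative, being a $\limsup$ over base points $x'\to x$ as well as $\delta\downarrow 0$, dominates the difference quotient frozen at $x'=x$, (iii) uses the coincidence of the Clarke and Fr\'echet subdifferentials together with the monotonicity of the convex difference quotient, and (iv) upgrades local to global minimality and then applies strict convexity at the midpoint. The only friction is with the paper's literal definition of local minimizer, which writes a strict inequality $\F(x+\delta z)>\F(x)$ for $\delta\in[0,\eta)$ (vacuously false at $\delta=0$); you have sensibly read it as $\geq$, which is clearly the intended meaning and is what your arguments in (i), (ii) and (iv) rely on.
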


Let us also recall the definition of saddle point:
\begin{defn}[saddle point]
  Let $X$ and $Y$ be real Banach spaces and let $\F\:X\times Y\to\R$ and $(x,y)\in X\in Y$. We say that $(x,y)$ is a \emph{saddle point} of $\F$ if 
  \begin{equation*}
    \F(\xi,y) \leq \F(x,y) \leq \F(x,\upsilon) \quad \text{for all $(\xi,\upsilon)\in X\times Y$}.
  \end{equation*}
\end{defn}

\subsection{$\Gamma$-convergence}
\label{sec:gamma-convergence}

We give the definition of $\Gamma$-convergence in its minimally general form needed here:
\begin{defn}[$\Gamma$-convergence~\cite{Braides02}]
  \label{defn:gamma-cv}
  Let $X$ be a real Banach space, and let $\F\:X\to\R$ and $(\F_\e)_{\e>0}$ be such that $\F_\e\: X\to \R$ for all $\e>0$. We say that $(\F_\e)_{\e>0}$ \emph{$\Gamma$-converges} to $\F$, and write $\F_\e \to_\Gamma \F$, as $\e\to 0^+$ if both conditions below are satisfied:
  \begin{enumerate}[label=(\roman*)]
    \item for all $x\in X$ and $(x_\e)_{\e>0} \subset X$ such that $x_\e \wto x$ as $\e\to0^+$, there holds
    \begin{equation*}
      \liminf_{\e\to 0^+} \F_\e(x_\e) \geq \F(x);
    \end{equation*}
    \item for all $x\in X$, there exists $(x_\e)_{\e>0} \subset X$ (referred to as \emph{recovery sequence} for $x$) such that $x_\e \wto x$ as $\e\to0^+$ and 
    \begin{equation*}
      \limsup_{\e\to 0^+} \F_\e(x_\e) \leq \F(x).
    \end{equation*}
  \end{enumerate}
We say that $(\F_\e)_{\e>0}$ \emph{$\Gamma$-converges} to $\Gamma$ as $\e\to 0^+$ \emph{along global minimizers} if the ``liminf'' condition above is only checked for some $(x_\e)_{\e>0}$ such that $x_\e$ is a global minimizer of $\F_\e$ for all $\e>0$.
\end{defn}

The following is a fundamental property of $\Gamma$-convergence, for which we provide the quick proof:
\begin{prop}[$\Gamma$-convergence and convergence of minimizers]
  \label{prop:gamma-cv-min}
  Let $X$ be a real Banach space, and let $\F\:X\to\R$ and $(\F_\e)_{\e>0}$ be such that $\F_\e\: X\to \R$ for all $\e>0$. Suppose that $\F_\e \to_\Gamma \F$ as $\e\to0^+$ along global minimizers, and assume that $(x_\e)_{\e>0}\subset X$ is such that $x_\e$ is a global minimizer of $\F_\e$ for all $\e>0$ and that there exists $x\in X$ with $x_\e\wto x$ as $\e\to0^+$. Then, $x$ is a global minimizer of $\F$. 
\end{prop}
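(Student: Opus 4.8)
The plan is to establish the standard fundamental theorem of $\Gamma$-convergence by pitting the minimizing sequence $(x_\e)_{\e>0}$ against a recovery sequence for an arbitrary competitor. To show that $x$ is a global minimizer of $\F$, I would fix an arbitrary $z\in X$ and aim to prove the single inequality $\F(x)\leq\F(z)$.

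First I would invoke the two halves of the hypothesis separately. On the one hand, since each $x_\e$ is a global minimizer of $\F_\e$ and $x_\e\wto x$ as $\e\to0^+$, the (weakened) ``liminf'' condition applies precisely to this sequence, giving $\F(x)\leq \liminf_{\e\to0^+}\F_\e(x_\e)$. On the other hand, the ``limsup'' condition of Definition~\ref{defn:gamma-cv} furnishes a recovery sequence $(z_\e)_{\e>0}$ for $z$, that is, $z_\e\wto z$ with $\limsup_{\e\to0^+}\F_\e(z_\e)\leq \F(z)$.

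Next I would chain these estimates through the minimizing property. Because $x_\e$ globally minimizes $\F_\e$, we have $\F_\e(x_\e)\leq \F_\e(z_\e)$ for every $\e>0$, whence $\liminf_{\e\to0^+}\F_\e(x_\e)\leq \liminf_{\e\to0^+}\F_\e(z_\e)$. Combining the above then yields
\begin{equation*}
  \F(x)\leq \liminf_{\e\to0^+}\F_\e(x_\e)\leq \liminf_{\e\to0^+}\F_\e(z_\e)\leq \limsup_{\e\to0^+}\F_\e(z_\e)\leq \F(z).
\end{equation*}
As $z\in X$ was arbitrary, this shows $x$ is a global minimizer of $\F$.

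I do not expect a genuine analytic obstacle here; the argument is essentially bookkeeping. The only point requiring care is matching each condition to the correct sequence: the ``liminf'' inequality must be applied to the specific sequence $(x_\e)_{\e>0}$ of global minimizers (which is exactly what the ``along global minimizers'' weakening licenses), while the ``limsup'' inequality is applied to a freshly constructed recovery sequence for the competitor $z$. No compactness or topological input beyond the assumed weak convergence $x_\e\wto x$ is needed, since the existence of the weak limit is already part of the hypothesis.
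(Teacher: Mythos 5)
Your proposal is correct and follows essentially the same argument as the paper: apply the ``liminf'' condition to the sequence of minimizers, the ``limsup'' condition to a recovery sequence for an arbitrary competitor, and chain them through the minimality $\F_\e(x_\e)\leq\F_\e(z_\e)$. The paper writes the same chain of inequalities in the reverse direction, so there is no substantive difference.
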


\begin{proof}
Let $y\in X$, and let $(y_\e)_\e$ be a recovery sequence for $y$. Then, using the ``limsup'' and ``liminf'' conditions in the definition of $\Gamma$-convergence and the minimality of $(x_\e)_{\e>0}$, we get
\begin{equation*}
  \F(y) \geq \limsup_{\e\to 0^+} \F_\e(y_\e) \geq \liminf_{\e\to 0^+} \F_\e(y_\e) \geq \liminf_{\e\to 0^+} \F_\e(x_\e) \geq \F(x),
\end{equation*}
which shows that $x$ is a global minimizer of $\F$.
\end{proof}

\section{Mollification}
\label{sec:conv-moll}

Let us recall some well known notions and facts on the convolution of one-variable functions which lead to the concept of mollification.

\begin{defn}[Schwartz class~\cite{Schwartz66}]
  A smooth function $f\:\R\to\R$ is said to be in the \emph{Schwartz class} if, for all $\alpha,\beta\in\N\cup\{0\}$, it satisfies
\begin{equation*}
  \sup_{x\in\R} \abs{x^\alpha f^{(\beta)}(x)} < \infty,
\end{equation*}
where $f^{(\beta)}$ stands for the $\beta$th derivative of $f$.
\end{defn}
\noindent Any smooth and compactly supported function is in the Schwartz class, and so is the normal distribution.

The Schwartz class allows us to define the convolution product between functions that are not necessarily integrable but have the ``right'' growth at infinity.
\begin{defn}[convolution]
  \label{defn:conv}
  Let $f\:\R\to\R$ be in the Schwartz class and $g\:\R\to\R$ be continuous and $\sigma$-bounded for some $\sigma\geq0$ in the sense of Definition~\ref{defn:bounded-op}. The \emph{convolution} $f*g\:\R\to\R$ of $f$ and $g$ is given by
  \begin{equation*}
    f*g(x) = \int_{-\infty}^\infty f(x-y)g(y) \d y = \int_{-\infty}^\infty f(y)g(x-y) \d y \quad \text{for all $x\in\R$},
  \end{equation*}
where the second equality is obtained by a change of variable.
\end{defn}

\begin{prop}[smoothness of convolution]
  With the notation of Definition~\ref{defn:conv}, it holds that $f*g$ is smooth and $(f*g)^{(\beta)} = f^{(\beta)}*g$ for all $\beta\in\N\cup\{0\}$.
\end{prop}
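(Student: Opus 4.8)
The plan is to show that $f*g$ is infinitely differentiable by justifying differentiation under the integral sign one derivative at a time, and to obtain the formula $(f*g)^{(\beta)} = f^{(\beta)}*g$ by induction on $\beta$. The base case $\beta = 0$ is simply the definition of the convolution, which is well defined by Definition~\ref{defn:conv}. For the inductive step I would fix $\beta$, assume $(f*g)^{(\beta)} = f^{(\beta)}*g$, and differentiate once more; since $f$ lies in the Schwartz class, so does every derivative $f^{(\beta)}$, so it suffices to treat a single differentiation of a convolution $h*g$ with $h$ Schwartz.

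First I would write $h*g(x) = \int_{-\infty}^\infty h(x-y)g(y)\d y$ and invoke the standard theorem on differentiation under the integral sign (equivalently, pass to the limit in the difference quotient via dominated convergence). The $x$-partial derivative of the integrand is $h'(x-y)g(y)$, and the whole matter reduces to exhibiting, on each bounded interval $[-R,R]$ of the variable $x$, a single integrable function of $y$ dominating $\abs{h'(x-y)g(y)}$ uniformly in $x\in[-R,R]$.

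The main obstacle, and really the only point of substance, is this domination estimate, where the rapid decay of the Schwartz function must beat the polynomial growth of $g$. Here I would use that $h'$ is again Schwartz, so for any $N\in\N$ there is $C_N>0$ with $\abs{h'(t)}\leq C_N(1+\abs{t}^N)^{-1}$ for all $t\in\R$; combined with the $\sigma$-boundedness $\abs{g(y)}\leq C(1+\abs{y}^\sigma)$ from Definition~\ref{defn:bounded-op} and the elementary bound $\abs{x-y}\geq \abs{y}-R$ valid for $\abs{x}\leq R$, I would obtain
\begin{equation*}
  \abs{h'(x-y)g(y)} \leq \frac{C\,C_N\,(1+\abs{y}^\sigma)}{1+(\abs{y}-R)^N} \quad \text{for all $\abs{x}\leq R$ and $\abs{y}\geq R$},
\end{equation*}
with a trivial uniform bound on the compact set $\abs{y}\leq R$. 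Choosing $N>\sigma+1$ renders the right-hand side integrable in $y$, which furnishes the required dominating function.

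With the domination in hand, differentiation under the integral sign yields that $h*g$ is differentiable with $(h*g)' = h'*g$; applying the same argument to $h'$ in place of $h$ shows that $h'*g$ is in turn differentiable, and so forth. Feeding this back into the induction gives $(f*g)^{(\beta)} = f^{(\beta)}*g$ for every $\beta\in\N\cup\{0\}$, and since each such derivative exists and is itself continuous (being a convolution of the continuous functions $f^{(\beta)}$ and $g$), $f*g$ is smooth, which completes the proof.
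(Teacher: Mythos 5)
Your proof is correct. The paper itself states this proposition without proof, presenting it as a recalled standard fact in the appendix, so there is no argument to compare against; your route---induction on $\beta$ reducing to a single differentiation under the integral sign, justified by dominating $\abs{h'(x-y)g(y)}$ uniformly for $x$ in a compact interval using the rapid decay of the Schwartz derivative against the $\sigma$-bounded (polynomial) growth of $g$---is exactly the standard one, and the domination estimate, which is the only point where something could go wrong, is carried out correctly (the bound $\abs{x-y}\geq\abs{y}-R$ together with $N>\sigma+1$ does yield an integrable majorant).
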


Functions in the Schwartz class can be used to approximate nonsmooth functions smoothly following the definitions and the proposition below.

\begin{defn}[Schwartz mollifier]
  A function $f\:\R\to\R$ is said to be a \emph{Schwartz mollifier} if it is in the Schwartz class, it is even and nonnegative, and $\int_{-\infty}^{\infty} f = 1$.
\end{defn}

\begin{defn}[mollifying sequence]
Given $f\:\R\to\R$ a Schwartz mollifier, the family $\{f_\e\}_{\e>0}$  of functions from $\R$ to $\R$ defined, for all $\e>0$, by
\begin{equation*}
  f_\e(x) = \frac1\e f\left(\frac x\e\right) \quad \text{for all $x\in\R$}
\end{equation*}
is called a \emph{mollifying sequence}.
\end{defn}

\begin{prop}[convergence, convexity and monotonicity of mollification]
  \label{prop:cv-moll}
  Let $\{f_\e\}_{\e>0}$ be a mollifying sequence and let $g\:\R\to\R$ be continuous and $\sigma$-bounded for some $\sigma\geq0$. Then, $\{g_\e\}_{\e>0} := \{f_\e*g\}_{\e>0}$ is referred to as a \emph{mollification} of $g$, and it satisfies
  \begin{equation*}
    \lim_{\;\e\to 0^+} g_\e(x) = g(x) \quad \text{for all $x\in\R$}.
  \end{equation*}
  Moreover, if $g$ is convex (respectively, strictly convex), then, for all $\e>0$, we have that $g_\e$ is convex (respectively, strictly convex) and 
  \begin{equation*}
    g_\e(x) \geq g(x) \quad \text{for all $x\in\R$};
  \end{equation*}
  if instead $g$ is nondecreasing (respectively, strictly increasing), then, for all $\e>0$, we have that $g_\e$ is nondecreasing (respectively, strictly increasing).
  
\end{prop}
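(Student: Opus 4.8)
The plan is to base all three assertions on the representation obtained by the change of variables $y=\e t$ in Definition~\ref{defn:conv}: since the underlying Schwartz mollifier $f$ (even, nonnegative and of unit mass) satisfies $f_\e(y)=\frac1\e f(y/\e)$, one has $f_\e(y)\,\d y = f(t)\,\d t$, whence
\[
  g_\e(x) = \int_{-\infty}^\infty f_\e(y)\,g(x-y)\,\d y = \int_{-\infty}^\infty f(t)\,g(x-\e t)\,\d t.
\]
For the pointwise convergence, I would subtract $g(x)=\int f(t)g(x)\,\d t$ to write $g_\e(x)-g(x)=\int f(t)\,(g(x-\e t)-g(x))\,\d t$. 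For each fixed $t$, continuity of $g$ gives $g(x-\e t)\to g(x)$ as $\e\to0^+$, and I would pass to the limit under the integral by dominated convergence: for $\e\in(0,1]$, the $\sigma$-boundedness of $g$ yields $\abs{g(x-\e t)}\leq C(1+(\abs{x}+\abs{t})^\sigma)$, so the integrand is dominated, uniformly in such $\e$, by $f(t)$ times a fixed polynomial in $\abs{t}$; this dominating function is integrable precisely because $f$ lies in the Schwartz class and hence decays faster than any polynomial.

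For the convexity claims, fix $\lambda\in[0,1]$ and $x_1,x_2\in\R$, and use the identity $\lambda x_1+(1-\lambda)x_2-\e t = \lambda(x_1-\e t)+(1-\lambda)(x_2-\e t)$. Convexity of $g$ gives, pointwise in $t$,
\[
  g\big(\lambda x_1+(1-\lambda)x_2-\e t\big)\leq \lambda\,g(x_1-\e t)+(1-\lambda)\,g(x_2-\e t);
\]
integrating against the nonnegative weight $f$ then delivers convexity of $g_\e$. When $g$ is strictly convex, $x_1\neq x_2$ and $\lambda\in(0,1)$, the pointwise inequality is strict for every $t$, and since $f$ has unit mass (hence is positive on a set of positive measure) the strictness survives integration. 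The inequality $g_\e\geq g$ is Jensen's inequality applied to the probability density $f$: because $f$ is even and Schwartz, $t\mapsto t\,f(t)$ is integrable with $\int_{-\infty}^\infty t\,f(t)\,\d t=0$, so the mean of $x-\e t$ under $f$ equals $x$, and convexity gives $g_\e(x)=\int f(t)\,g(x-\e t)\,\d t \geq g\!\left(\int f(t)(x-\e t)\,\d t\right)=g(x)$.

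For monotonicity, take $x_1\leq x_2$; then $x_1-\e t\leq x_2-\e t$ for every $t$, so a nondecreasing $g$ satisfies $g(x_1-\e t)\leq g(x_2-\e t)$ pointwise, and integrating against $f\geq0$ gives $g_\e(x_1)\leq g_\e(x_2)$; the strictly increasing case follows from the same positive-mass argument used for strict convexity. The only genuine care point---rather than a true obstacle---is the integrable domination in the convergence step, which hinges on matching the Schwartz decay of $f$ against the at-most-polynomial growth of $g$, together with the absolute convergence and vanishing of the first moment $\int t\,f(t)\,\d t$ needed to identify the mean in Jensen's inequality; both reduce to the rapid decay of Schwartz functions.
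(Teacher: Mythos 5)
Your argument is correct and complete: the paper itself states Proposition~\ref{prop:cv-moll} as a recalled classical fact in Appendix~\ref{sec:conv-moll} and supplies no proof, so there is nothing to compare against, but your substitution $y=\e t$, the dominated-convergence step (where you correctly match the Schwartz decay of $f$ against the $\sigma$-bounded growth of $g$), the pointwise convexity/monotonicity inequalities integrated against the nonnegative unit-mass weight, and the Jensen argument using $\int_{-\infty}^{\infty} t\,f(t)\,\d t=0$ (from evenness of $f$) together establish every assertion of the statement. The only remark worth adding is that in the strict cases you are implicitly using that the everywhere-positive, continuous integrand multiplied by $f\geq0$ with $\int f=1$ has strictly positive integral, which you do flag via the positive-mass observation, so no gap remains.
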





\bibliographystyle{abbrv}

\begin{thebibliography}{10}

\bibitem{Ahmed2018}
E.~Ahmed, A.~Fumagalli, and A.~Budi{\v{s}}a.
\newblock A multiscale flux basis for mortar mixed discretizations of reduced
  {D}arcy--{F}orchheimer fracture models.
\newblock {\em Computer Methods in Applied Mechanics and Engineering},
  354:16--36, 2019.

\bibitem{AFM18}
J.~D. Audu, F.~A. Fairag, and S.~A. Messaoudi.
\newblock On the well-posedness of generalized {D}arcy--{F}orchheimer equation.
\newblock {\em Boundary Value Problems}, 2018(1):123, 2018.

\bibitem{Bear1972}
J.~Bear.
\newblock {\em Dynamics of {F}luids in {P}orous {M}edia}.
\newblock American Elsevier, 1972.

\bibitem{BeiraoVeiga2016}
L.~Beir\~{a}o~da Veiga, F.~Brezzi, L.~D. Marini, and A.~Russo.
\newblock Mixed virtual element methods for general second order elliptic
  problems on polygonal meshes.
\newblock {\em ESAIM: M2AN}, 50(3):727--747, 2016.

\bibitem{Borwein07}
J.~M. Borwein.
\newblock Maximality of sums of two maximal monotone operators in general
  {B}anach space.
\newblock {\em Proceedings of the American Mathematical Society},
  135(12):3917--3924, 2007.

\bibitem{Braides02}
A.~Braides.
\newblock {$\Gamma$}-convergence for beginners.
\newblock In {\em of Oxford Lecture Series in Mathematics and its
  Applications}. University Press, 2002.

\bibitem{Brezzi2014}
F.~Brezzi, R.~S. Falk, and D.~L. Marini.
\newblock Basic principles of mixed virtual element methods.
\newblock {\em ESAIM: M2AN}, 48(4):1227--1240, 2014.

\bibitem{Browder68}
F.~E. Browder.
\newblock Nonlinear maximal monotone operators in {B}anach space.
\newblock {\em Mathematische Annalen}, 175(2):89--113, 1968.

\bibitem{Chieu10}
N.~H. Chieu.
\newblock Integral of the {C}larke subdifferential mapping and a generalized
  {N}ewton--{L}eibniz formula.
\newblock {\em Nonlinear Analysis: Theory, Methods \& Applications},
  73(3):614--621, 2010.

\bibitem{Christie2001}
M.~A. Christie and M.~J. Blunt.
\newblock {\em SPE-66599-MS}, chapter Tenth SPE Comparative Solution Project: A
  Comparison of Upscaling Techniques, page~13.
\newblock Society of Petroleum Engineers, Houston, Texas, 2001.

\bibitem{Clarke90}
F.~H. Clarke.
\newblock {\em Optimization and nonsmooth analysis}.
\newblock Society for Industrial and Applied Mathematics, 1990.

\bibitem{CK92}
J.~Czarnovska and G.~Kvieciaska.
\newblock On the {D}arboux property of multivalued functions.
\newblock {\em Demonstratio Mathematica}, 25(1--2):193--200, 1992.

\bibitem{Dassi2020}
F.~Dassi, A.~Fumagalli, D.~Losapio, S.~Scial\`o, A.~Scotti, and G.~Vacca.
\newblock The mixed virtual element method on curved edges in two dimensions.
\newblock {\em Computer Methods in Applied Mechanics and Engineering}, 386,
  2021.

\bibitem{Frih2008}
N.~Frih, J.~E. Roberts, and A.~Saada.
\newblock Modeling fractures as interfaces: a model for {F}orchheimer
  fractures.
\newblock {\em Computers and Geosciences}, 12(1):91--104, 2008.

\bibitem{Fumagalli2017}
A.~Fumagalli.
\newblock Dual virtual element method in presence of an inclusion.
\newblock {\em Applied Mathematics Letters}, 86:22--29, 2018.

\bibitem{Fumagalli2017a}
A.~Fumagalli and E.~Keilegavlen.
\newblock Dual virtual element methods for discrete fracture matrix models.
\newblock {\em Oil \& Gas Science and Technology - Revue d'IFP Energies
  nouvelles}, 74(41):1--17, 2019.

\bibitem{FP21}
A.~Fumagalli and F.~S. Patacchini.
\newblock Model adaptation for non-linear elliptic equations in mixed form:
  existence of solutions and numerical strategies.
\newblock {\em ESAIM: Mathematical Modelling and Numerical Analysis},
  56(2):565--592, 2022.

\bibitem{Giner98}
E.~Giner.
\newblock On the {C}larke subdifferential of an integral functional on {$L_p$},
  $1 \leq p < \infty$.
\newblock {\em Canadian Mathematical Bulletin}, 41(1):41--48, 1998.

\bibitem{GP17}
E.~Giner and J.-P. Penot.
\newblock Subdifferentiation of integral functionals.
\newblock {\em Mathematical programming}, 168:1--31, 2017.

\bibitem{Girault2008}
V.~Girault and M.~F. Wheeler.
\newblock Numerical discretization of a {D}arcy--{F}orchheimer model.
\newblock {\em Numerische Mathematik}, 110(2):161--198, 2008.

\bibitem{Helmig1997}
R.~Helmig.
\newblock {\em Multiphase flow and transport processes in the subsurface: a
  contribution to the modeling of hydrosystems.}
\newblock Springer-Verlag, Berlin, Germany, 1997.

\bibitem{HYHW13}
Y.~Huang, Z.~Yang, Y.~He, and X.~Wang.
\newblock An overview on nonlinear porous flow in low permeability porous
  media.
\newblock {\em Theoretical and Applied Mechanics Letters}, 3(2):022001, 2013.

\bibitem{Keilegavlen2020}
E.~Keilegavlen, R.~Berge, A.~Fumagalli, M.~Starnoni, I.~Stefansson, J.~Varela,
  and I.~Berre.
\newblock Porepy: An open-source software for simulation of multiphysics
  processes in fractured porous media.
\newblock {\em Computational Geosciences}, 2020.

\bibitem{Klee59}
V.~Klee.
\newblock Some new results on smoothness and rotundity in normed linear spaces.
\newblock {\em Mathematische Annalen}, 139(1):51–63, 1959.

\bibitem{Knabner2014}
P.~Knabner and J.~E. Roberts.
\newblock Mathematical analysis of a discrete fracture model coupling {D}arcy
  flow in the matrix with {D}arcy--{F}orchheimer flow in the fracture.
\newblock {\em ESAIM: Mathematical Modelling and Numerical Analysis},
  48:1451--1472, 2014.

\bibitem{LSM20}
J.~Li, A.~M.-C. So, and W.-K. Ma.
\newblock Understanding notions of stationarity in non-smooth optimization.
\newblock {\em arXiv}, 2020.

\bibitem{Leonard07}
C.~Léonard.
\newblock Convex minimization problems with weak constraint qualifications,
  2007.

\bibitem{Morales2017}
F.~A. Morales and R.~E. Showalter.
\newblock A {D}arcy--{B}rinkman model of fractures in porous media.
\newblock {\em Journal of Mathematical Analysis and Applications}, 452(2):1332
  -- 1358, 2017.

\bibitem{Raviart1977}
P.-A. Raviart and J.-M. Thomas.
\newblock A mixed finite element method for second order elliptic problems.
\newblock {\em Lecture Notes in Mathematics}, 606:292--315, 1977.

\bibitem{Roberts1991}
J.~E. Roberts and J.-M. Thomas.
\newblock Mixed and hybrid methods.
\newblock In {\em Handbook of numerical analysis, {V}ol.\ {II}}, Handb. Numer.
  Anal., II, pages 523--639. North-Holland, Amsterdam, 1991.

\bibitem{Rockafellar70}
R.~Rockafellar.
\newblock On the maximal monotonicity of subdifferential mappings.
\newblock {\em Pacific Journal of Mathematics}, 33(1):209--216, 1970.

\bibitem{Rockafella70_2}
R.~T. Rockafellar.
\newblock On the maximality of sums of nonlinear monotone operators.
\newblock {\em Transactions of the American mathematical society}, 149(1):75,
  1970.

\bibitem{Schwartz66}
L.~Schwartz.
\newblock {\em Th{\'e}orie des distributions}.
\newblock Publications de l'Institut de Mathématiques de l'Université de
  Strasbourg. Hermann, 1966.

\bibitem{SV01}
F.~R. Spena and A.~Vacca.
\newblock A minmax formulation of nonlinear seepage flow problem.
\newblock {\em Journal of Information and Optimization Sciences},
  22(3):401--417, 2001.

\bibitem{SV01_2}
F.~R. Spena and A.~Vacca.
\newblock A potential formulation of non-linear models of flow through
  anisotropic porous media.
\newblock {\em Transport in Porous Media}, 2001.

\bibitem{Voisei11}
M.~D. Voisei.
\newblock Maximal monotone operators with non-empty domain interior;
  characterizations and continuity properties.
\newblock {\em Journal of Mathematical Analysis and Applications}, 391(1),
  2011.

\bibitem{Zeng2006}
Z.~Zeng and R.~Grigg.
\newblock A criterion for non-{D}arcy flow in porous media.
\newblock {\em Transport in Porous Media}, 63(1):57--69, 2006.

\end{thebibliography}

\end{document}